\newcounter{denseversion}
\newcounter{authorcounter}
\newcounter{adresscounter}
\def\title#1{\gdef\@title{#1}}
\def\@title{}
\def\subtitle#1{\gdef\@subtitle{#1}}
\def\@subtitle{}
\def\authortagsused{0}
\def\adresstag#1{\if!#1!\else$^{\;#1\;}$\fi}
\renewcommand{\author}[2][]{
  \stepcounter{authorcounter}
  \if!#1!\else\gdef\authortagsused{1}\fi
  \ifnum\value{authorcounter}=1
    \def\@authorstringa{#2\adresstag{#1}}
    \def\@authorstringb{#2}
    \def\@authorstringc{#2\adresstag{#1}}
  \else
    \g@addto@macro\@authorstringa{\ and #2\adresstag{#1}}
    \g@addto@macro\@authorstringb{\ and #2}
    \g@addto@macro\@authorstringc{\\#2\adresstag{#1}}
  \fi}
\def\@author{\ifnum\value{denseversion}=0\@authorstringa\else\@authorstringb\fi}
\def\@adressstringa{}
\def\@adressstringb{}
\newcommand{\adress}[2][]{
  \stepcounter{adresscounter}
  \ifnum\value{adresscounter}=1
    \g@addto@macro\@adressstringa{\ifnum\authortagsused=0\def\br{\\}\else\def\br{, }\fi\adresstag{#1}#2}
    \g@addto@macro\@adressstringb{\def\br{\\}\adresstag{#1}\parbox[t]{14cm}{#2}}
  \else
    \g@addto@macro\@adressstringa{\\[\bigskipamount]\adresstag{#1}#2}
    \g@addto@macro\@adressstringb{\\[\medskipamount]\adresstag{#1}\parbox[t]{14cm}{#2}}
  \fi}
\def\preprint#1{\gdef\@preprint{#1}}
\def\@preprint{}
\def\keywords#1{\gdef\@keywords{#1}}
\def\@keywords{}
\def\msc#1{\gdef\@msc{#1}}
\def\@msc{}
\def\email#1{
   \gdef\@email{#1}
   \g@addto@macro\@authorstringc{ {\it (#1)}}}
\def\@email{}
\def\dedication#1{\gdef\@dedication{#1}}
\def\@dedication{}
\def\mybaselinestretch#1{\gdef\@mybaselinestretch{#1}}
\def\@mybaselinestretch{}
\def\refname{References}
\renewcommand{\baselinestretch}{\@mybaselinestretch}
\def\denseversion{
  \setcounter{denseversion}{1}
  \newgeometry{left=3cm,right=3cm,top=3cm}
  \mybaselinestretch{1.1}
  \renewcommand{\baselinestretch}{\@mybaselinestretch}
  \normalfont
  \def\possiblelinebreak{}
  \fancyfoot[C]{\itshape{--$\,\,$\thepage$\,\,$--}}}
\newlength{\myparskip}
\newlength{\myproofparskip}
\def\possiblelinebreak{\\}
\renewcommand{\emph}[1]{\def\reserved@a{it}\ifx\f@shape\reserved@a\uline{#1}\else\textit{#1}\fi}
\newcommand{\mytableofcontents}{
   \ifnum\value{denseversion}=0
     \renewcommand{\baselinestretch}{1}
     \normalfont
     \tableofcontents
     \renewcommand{\baselinestretch}{\@mybaselinestretch}
     \normalfont
   \else
     \renewcommand{\baselinestretch}{0.5}
     \normalfont
     \tableofcontents
     \renewcommand{\baselinestretch}{\@mybaselinestretch}
     \normalfont
   \fi}
\newlength{\zeilenlaenge}
\def\putindent#1{
  \settowidth{\zeilenlaenge}{#1}
  \ifnum\zeilenlaenge>\textwidth
    #1
  \else
    \noindent #1
  \fi
}
\def\href#1#2{#2}
\def\kohyp{
  \usepackage{hyperref}
  \hypersetup{
    linktocpage = true,
    pdftitle = {\@title},
    pdfauthor = {\@author},
    pdfkeywords = {\@keywords},    
    bookmarksopen = true,
    bookmarksopenlevel = 1
  }}  
\def\showkeywords{\begin{flushleft}\footnotesize\textbf{Keywords}: \@keywords\end{flushleft}}
\def\showmsc{\begin{flushleft}\footnotesize\textbf{MSC 2010}: \@msc\end{flushleft}}
\newcounter{mythm}[subsection]
\newcounter{mainthm}
\def\setsecnumdepth#1{
  \setcounter{secnumdepth}{#1}
  \setcounter{mythm}{0}
  \ifnum \c@secnumdepth >0
    \ifnum \c@secnumdepth >1
      \def\themythm{\thesubsection.\arabic{mythm}}
      \numberwithin{equation}{subsection}
      \renewcommand\theequation{\thesubsection.\arabic{equation}}
    \else
      \def\themythm{\thesection.\arabic{mythm}}
      \numberwithin{equation}{section}
      \renewcommand\theequation{\thesection.\arabic{equation}}
    \fi
  \else
    \def\themythm{\arabic{mythm}}
  \fi}
\newenvironment{mythmenv}{\strut\ \setlength{\parskip}{\myproofparskip}}{\setlength{\parskip}{\myparskip}}
\newlength{\mythmskip}
\newlength{\mythmtopskip}
\newtheoremstyle{mythmstylea}{\mythmtopskip}{\mythmskip}{\it}{}{\bf}{.}{0em}{}
\newtheoremstyle{mythmstyleb}{\mythmtopskip}{\mythmskip}{}{}{\bf}{.}{0em}{}
\theoremstyle{mythmstylea}
\newtheorem{mytheorem}[mythm]{\nameTheorem}
\newtheorem{mydefinition}[mythm]{\nameDefinition}
\newtheorem{mycorollary}[mythm]{\nameCorollary}
\newtheorem{myproposition}[mythm]{\nameProposition}
\newtheorem{mylemma}[mythm]{\nameLemma}
\newtheorem{mymaintheorem}[mainthm]{\nameTheorem}
\newtheorem{mymaincorollary}[mainthm]{\nameCorollary}
\newtheorem{mymainproposition}[mainthm]{\nameProposition}
\newtheorem{mymaindefinition}[mainthm]{\nameDefinition}
\theoremstyle{mythmstyleb}
\newtheorem{myremark}[mythm]{\nameRemark}
\newtheorem{myproblem}[mythm]{\nameProblem}
\newtheorem{myexample}[mythm]{\nameExample}
\newtheorem{myexercise}[mythm]{\nameExercise}
\newenvironment{theorem}[1][]{\begin{mytheorem}[#1]\begin{mythmenv}}{\end{mythmenv}\end{mytheorem}}
\newenvironment{definition}[1][]{\begin{mydefinition}[#1]\begin{mythmenv}}{\end{mythmenv}\end{mydefinition}}
\newenvironment{corollary}[1][]{\begin{mycorollary}[#1]\begin{mythmenv}}{\end{mythmenv}\end{mycorollary}}
\newenvironment{proposition}[1][]{\begin{myproposition}[#1]\begin{mythmenv}}{\end{mythmenv}\end{myproposition}}
\newenvironment{lemma}[1][]{\begin{mylemma}[#1]\begin{mythmenv}}{\end{mythmenv}\end{mylemma}}
\newenvironment{remark}[1][]{\begin{myremark}[#1]\begin{mythmenv}}{\end{mythmenv}\end{myremark}}
\newenvironment{example}[1][]{\begin{myexample}[#1]\begin{mythmenv}}{\end{mythmenv}\end{myexample}}
\newenvironment{maintheorem}[1]{\begin{mymaintheorem}\begin{mythmenv}}{\end{mythmenv}\end{mymaintheorem}}
\renewenvironment{proof}[1][\nameProof]{\noindent #1. \begin{mythmenv}}{\hphantom{$\square$}\hfill$\square$\end{mythmenv}\medskip}
\def\tocsection#1{\section*{#1}\addcontentsline{toc}{section}{#1}}
\def\mytitle{}
\def\zmptitle{
  \begin{tabular}{cc}
    \begin{minipage}[c]{0.4\textwidth}
      \begin{flushleft}
        \includegraphics[width=110pt]{../../tex/zmp}
      \end{flushleft}  
    \end{minipage}&
    \begin{minipage}[c]{0.55\textwidth}
      \begin{flushright}
      {\small\sf\@preprint}
      \end{flushright}
    \end{minipage}
  \end{tabular}
  \vskip 2cm}
\def\maketitle{
  \setlength{\parskip}{\myparskip}  
  \newpage
  \noindent
  \mytitle
  \begin{center}
    \LARGE\@title\\
    \if!\@subtitle!\else\smallskip\LARGE\@subtitle\\\fi
    \bigskip
    \if!\@author!\else\bigskip\large\@author\\\fi
    \ifnum\value{denseversion}=0
      \if!\@adressstringa!\else\bigskip\normalsize\@adressstringa\\\fi
      \if!\@email!\else\ifnum\value{authorcounter}=1\bigskip\normalsize\textit{\@email}\\\else\fi\fi
    \else
    \fi
    \if!\@dedication!\else\bigskip\normalsize{\@dedication}\\\fi
  \end{center}
  \ifnum\value{denseversion}=0\vskip 1.5cm\else\vskip0.5cm\fi
  \if!\@draft!\else\thispagestyle{empty}\fi}
\def\kobiblink#1{
  \StrSubstitute{#1}{\~{}}{\string~}[\myurl]
  \StrSubstitute{#1}{_}{\underline{\;\;}}[\mylink]
  \StrSubstitute{\mylink}{&}{\&}[\mylink]
  \StrSubstitute{\mylink}{/}{/\allowbreak}[\mylink]
  \newline Available as: \mbox{\;}
  \href{\myurl}{\texttt{\mylink}}}
\def\nolinks{\def\kobiblink##1{}}
\def\kobib#1{
  \addcontentsline{toc}{section}{\refname}
  \begin{raggedright}
  \ifnum\value{denseversion}=0\else\small\fi
  \Oldbibliography{#1/kobib}
  \bibliographystyle{#1/kobib}
  \end{raggedright}
  \ifnum\value{denseversion}=0\else
      \noindent
      \if!\@authorstringc!\else
        \ifnum\authortagsused=0\ifnum\value{authorcounter}>1\normalsize\@authorstringc\\[\medskipamount]\else\fi\else\normalsize\@authorstringc\\[\medskipamount]\fi
      \fi
      \if!\@adressstringb!\else\normalsize\@adressstringb\\{}\fi
      \ifnum\authortagsused=0
        \ifnum\value{authorcounter}=1
          \if!\@email!\else\linebreak\normalsize\textit{\@email}\\{}\fi
        \else
        \fi
      \else
      \fi
  \fi
  }
\let\Oldbibliography\bibliography
\def\bibliography#1{
  \begin{raggedright}
  \ifnum\value{denseversion}=0\else\small\fi
  \Oldbibliography{#1}
  \end{raggedright}
  \ifnum\value{denseversion}=0\else
      \noindent
      \if!\@authorstringc!\else
        \ifnum\authortagsused=0\ifnum\value{authorcounter}>1\normalsize\@authorstringc\\[\medskipamount]\else\fi\else\normalsize\@authorstringc\\[\medskipamount]\fi
      \fi
      \if!\@adressstringb!\else\normalsize\@adressstringb\\{}\fi
      \ifnum\authortagsused=0
        \ifnum\value{authorcounter}=1
          \if!\@email!\else\linebreak\normalsize\textit{\@email}\\{}\fi
        \else
        \fi
      \else
      \fi
  \fi
}
\newenvironment{commentfigure}{\begin{comment}}{\end{comment}}
\newenvironment{sidewayscommentfigure}{\begin{minipage}}{\end{minipage}}
\def\draft#1#2#3#4{
  \ifnum#4=0
    \def\showcomments{ - Comments are not displayed}
  \else
    \renewenvironment{comment}{\begin{list}{}{\rightmargin=1cm\leftmargin=1cm}\item\sf\begin{small}}{\end{small}\end{list}}

    \def\showcomments{ - Comments are displayed}
  \fi
  \gdef\@draft{DRAFT - Version #1 - Last edited on #2 - Last edited by #3\showcomments}
  \fancyhead[C]{\footnotesize\tt\textcolor{red}{\@draft}}}
\def\@draft{}
\def\N {\mathbb{N}}
\def\Z {\mathbb{Z}}
\def\R {\mathbb{R}}
\def\im{\mathrm{i}}
\def\id{\mathrm{id}}
\def\hc#1{\mathrm{h}_{#1}}
\def\h {\mathrm{H}}
\def\subset{\subseteq}
\def\sep{\;|\;}
\def\maps{\colon}
\def\df{:=}
\def\eq{=}
\renewcommand{\varepsilon}{\epsilon}
\renewcommand{\to}{\!\xymatrix@R=0cm@C=1.4em{\ar[r] &}}
\renewcommand{\mapsto}{\!\xymatrix@R=0cm@C=1.4em{\ar@{|->}[r] &}\!}
\renewcommand{\Rightarrow}{\!\xymatrix@R=0cm@C=1.4em{\ar@{=>}[r] &}\!}
\renewcommand{\Leftarrow}{\!\xymatrix@R=0cm@C=1.4em{\ar@{<=}[r] &}\!}
\newcommand{\incl}{\!\xymatrix@R=0cm@C=1.4em{\ar@{^(->}[r] &}\!}
\renewcommand\Leftrightarrow{\!\xymatrix@R=0cm@C=1.4em{\ar@{<=>}[r] &}\!}
\renewenvironment{proof}[1][\nameProof]
  {\par\pushQED{\qed}%
   \normalfont \topsep6\p@\@plus6\p@\relax
   \trivlist
   \item[\hskip\labelsep
         \itshape
         #1\@addpunct{.}]
   \begin{mythmenv}{}{}{}}
  {\end{mythmenv}\popQED\endtrivlist\@endpefalse}
\def\notebox#1#2{\begin{minipage}[b]{#1}\sloppy\renewcommand{\baselinestretch}{0.8}\footnotesize \begin{center}#2\end{center}\end{minipage}}
\def\erf#1{(\ref{#1})}
\def\stackref#1#2{\stackrel{\text{\ref{#1}}}{#2}}
\def\eqref#1{\stackref{#1}{=}}
\newlength{\myeqt} % Darin wird die Länge des übergebenen Textes abgespeichert
\newlength{\myeqs} % Darin wird die Länge des übergebenen Symbolds abgespeichert
\newlength{\myeqm} % Wieviel "klein" über die Breite des Symbolds hinausgehen darf
\newlength{\myeqn} % Die Standardbreite für große Boxen
\newcommand\symtext[3][\myeqn]{
  \settowidth{\myeqt}{#2}
  \settowidth{\myeqs}{$#3$}
  \addtolength{\myeqs}{\the\myeqm}
  \ifdim\myeqt>\myeqs
    %groß
    \stackrel{\hspace{-#1}\notebox{#1}{\medskip #2 \\ $\downarrow$\smallskip}\hspace{-#1}}{#3}
  \else
    %klein
    \stackrel{\text{#2}}{#3}
  \fi}
\def\brackets#1{\IfStrEq{#1}{-}{}{(#1)}}
\def\subindex#1{\IfStrEq{#1}{-}{}{_{#1}}}
\newcommand{\alxydim}[2]{\begin{aligned}\xymatrix#1{#2}\end{aligned}}
\def\bigset#1#2{\left\lbrace\;\begin{minipage}[c]{#1}\begin{center}#2\end{center}\end{minipage}\;\right\rbrace}
\newlength{\myl}
\def\ddt#1#2#3{\left.\frac{\mathrm{d}^{\IfStrEq{#1}{1}{}{#1}}}{\mathrm{d}#2}\IfStrEq{#2}{#3}{\right.}{\right|_{#3}}}
\newcommand{\ueins}{{\mathrm{U}}(1)}
\def\diff{\mathcal{D}\!i\!f\!\!f}
\def\ev{\mathrm{ev}}
\def\pr{{\mathrm{pr}}}
\newlength{\widthtmp}
\def\length#1{\settowidth{\widthtmp}{#1}\the\widthtmp}
\def\lli#1{\prescript{}{#1}}
\def\buntech#1#2{\mathcal{B}\hspace{-0.01em}un_{\hspace{0.05em}#1}^{#2}}
\def\trivlin{\mathbf{I}}
\def\bun#1#2{\buntech{#1}{}\brackets{#2}}
\def\ubun#1{\bun\relax{#1}}
\def\ufusbun#1{\mathcal{F}\!us\buntech{}{}(#1)}
\def\ufusbunconsf#1{\mathcal{F}\!us\bunconsf{}{#1}}
\def\bunconsf#1#2{\buntech{#1}{\!\nabla}{}^{_{\!\!s\!f}}\hspace{-0.15em}\brackets{#2}}
\def\grbtech#1{\mathcal{G}\hspace{-0.06em}r\hspace{-0.06em}b_{\hspace{-0.07em}{#1}}}
\def\grb#1#2{\grbtech{#1}\brackets{#2}}
\def\grbcon#1#2{\grbtech{#1}^{\nabla\!}\brackets{#2}}
\def\ugrb#1{\grb{\,}{#1}}
\def\ugrbcon#1{\grbcon\relax{#1}}
\def\mult{\mathcal{M}\hspace{-0.14em}u\hspace{-0.04em}l\hspace{-0.1em}t}
\def\multgrb#1{\mult\hspace{-0.07em}\ugrb#1}
\def\multgrbadcon#1{\mult\hspace{-0.07em}\grbtech{\,}^{\infty}\brackets{#1}}
\def\multgrbcon#1{\mult\hspace{-0.07em}\ugrbcon#1}
\def\refname{References}
\def\quot#1{``#1''}
\def\quand{\quad\text{ and }\quad}
\def\quomma{\quad\text{, }\quad}
\def\quith{\quad\text{ with }\quad}
\def\nameTheorem{Theorem}
\def\nameDefinition{Definition}
\def\nameCorollary{Corollary}
\def\nameProposition{Proposition}
\def\nameLemma{Lemma}
\def\nameRemark{Remark}
\def\nameProblem{Problem}
\def\nameExample{Example}
\def\nameExercise{Exercise}
\def\nameProof{Proof}
\def\can{can}
\def\thinpairs#1{#1^2_{\text{\tiny \it thin}}}
\def\px#1#2{P_{\!#2}#1}
\def\p{P}
\def\ev{\mathrm{ev}}
\def\hc#1{\mathrm{h}_{#1}}
\def\pcomp{\star}
\def\prev#1{\overline{#1}}
\def\un{\mathscr{R}}
\def\uncon{\mathscr{R}^{\nabla}}
\def\tr{\mathscr{T}}
\def\trcon{\mathscr{T}^{\nabla}}
\def\fusextcon#1#2#3{\mathcal{F}\!us\!\,\extconsf{#1}{#2}{#3}}
\def\ufusbun#1{\mathcal{F}\!us\buntech{}{}(#1)}
\def\ufusbunconsf#1{\mathcal{F}\!us\bunconsf{}{#1}}
\def\bunconsf#1#2{\buntech{#1}{\!\nabla}{}^{_{\!\!s\!f}}\hspace{-0.15em}\brackets{#2}}
\title{Transgressive loop group extensions}
\author{Konrad Waldorf}
\email{konrad.waldorf@uni-greifswald.de}
\keywords{Loop group, central extension, transgression, basic gerbe, multiplicative gerbe}
\def\rebox#1{\reflectbox{#1}}
\def\ext#1{\mathcal{E}\hspace{-0.2em}x\hspace{-0.05em}t(#1)}
\def\fusextth#1{\mathcal{F}\hspace{-0.22em}u\hspace{-0.15em}s\mathcal{E}\hspace{-0.2em}x\hspace{-0.05em}t^{th}(#1)}
\def\fusextcon#1{\mathcal{F}\hspace{-0.22em}u\hspace{-0.15em}s\mathcal{E}\hspace{-0.2em}x\hspace{-0.05em}t^{\nabla}(#1)}
\def\gbas{\mathcal{G}_{bas}}
\def\inf#1{\EuScript{#1}}
\def\can{1}
\def\wzwmodel{\mathcal{L}_{G}}
\def\multtr{\mathscr{M}\hspace{-0.9ex}\tr}
\def\multtrcon{\mathscr{M}\hspace{-0.9ex}\trcon}
\def\multun{\mathscr{M}\hspace{-0.7ex}\un}
\def\multuncon{\mathscr{M}\hspace{-0.7ex}\uncon}
\def\xyst{4em}
\begin{document}

%\draftstamp{3.0}{17.12.2015}{Konrad}

\denseversion

\maketitle

\begin{abstract}
A central extension of the loop group of a Lie group is called transgressive, if it corresponds under transgression to  a degree four class in the cohomology of the classifying space of the Lie group. Transgressive loop group extensions are those that can be explored by finite-dimensional, higher-categorical geometry over the Lie group.  We show how transgressive central extensions can be characterized in a  loop-group theoretical way, in terms of loop fusion  and thin homotopy equivariance. 
%\showkeywords
%\showmsc
\end{abstract}

%\setcounter{tocdepth}{2}
%\mytableofcontents

%\noarxiv
\nolinks

\def\lop{\cup}

\mytableofcontents

\vspace{-0.1em}

\section{Introduction}

The present article is about a  Lie group $G$, its loop group $LG := C^{\infty}(S^1,G)$ and  central extensions \begin{equation*}
1 \to \ueins \to \mathcal{L} \to LG \to 1
\end{equation*}
in the category of Fréchet Lie groups. Some central extensions $\mathcal{L}$ can be obtained from structure over $G$ called \emph{multiplicative  bundle gerbe with connection} via a procedure called \emph{transgression}. These central extensions are called  \emph{transgressive}. In the case of  $G$  compact, transgression induces a map
\begin{equation*}
\h^4(BG,\Z) \to \bigset{11em}{Isomorphism classes of 
central extensions of $LG$}\text{,}
\end{equation*}
and a central extension is transgressive if and only if it is in the image of that map. An example of a transgressive central extension is the universal central extension of the loop group of a compact  simply-connected Lie group $G$: it corresponds to a generator of $\h^4(BG,\Z)\cong \Z$.

The goal of this article is to characterize transgressive central extensions for arbitrary connected Lie groups in purely loop group-theoretic terms. For this purpose we consider two  relations on  $LG$:
\begin{enumerate}
\item 
thin homotopy: a homotopy between two loops is called thin, if its differential has nowhere  full rank; these are homotopies that sweep out a surface of zero area. 

\item
loop fusion: it relates two loops that share a common line segment to a new loop with that segment deleted. 
\end{enumerate}
We introduce the notion of a \emph{thin fusion extension}: a central extension
\begin{equation*}
1 \to \ueins \to \mathcal{L} \to LG \to 1
\end{equation*}
in which   both relations are lifted  in a consistent way to $\mathcal{L}$. Thin fusion extensions form a subclass of central extensions of $LG$ with several interesting properties. For example, we show that they are disjoint-commutative: if $p_1,p_2\in \mathcal{L}$ project to loops supported on disjoint subintervals of $S^1$, then they commute, $p_1\cdot p_2=p_2 \cdot p_1$.

The main results of this paper are summarized in  the following main theorem.

\begin{maintheorem}{A}
\label{th:main}
Let $G$ be a connected Lie group. 
A central extension $\mathcal{L}$ of $LG$ is transgressive if and only if it can be equipped with the structure of  a thin fusion extension. Moreover, transgression is a group isomorphism
\begin{equation*}
\bigset{13em}{Isomorphism classes of multiplicative bundle gerbes over $G$ that admit connections} \cong \bigset{9em}{Isomorphism classes of thin fusion extensions of $LG$}\text{.}
\end{equation*}
If $G$ is compact,  both groups are isomorphic to $\h^4(BG,\Z)$.
\end{maintheorem}

As a consequence of Theorem \ref{th:main}, we obtain that transgressive central extensions are disjoint-com\-mu\-tative; this provides an accessible necessary condition for the transgressivity of a central extension.

The present work is a contribution to the programme of  exploring the geometry of loop groups via finite-dimensional, higher geometry over Lie groups. Our main theorem determines the class of central extensions of loop groups that are accessible by such methods: thin fusion extensions.

Transgression of gerbes has first been defined by Gaw\c edzki in relation with two-dimensional conformal field theories \cite{gawedzki3}, and then by Brylinski and McLaughlin in the setting of  sheaves of groupoids \cite{brylinski1,brylinski4}.    The multiplicative  bundle gerbes we use here have been introduced by Carey et al. in \cite{carey4}, and  transgression of those has been developed in \cite{waldorf5}.

The question which central extensions are transgressive has been studied before by Brylinski and McLaughlin \cite{brylinski4,Brylinski1996}. For connected semisimple complex Lie groups, they obtained a characterization in terms of the so-called Segal-Witten reciprocity law. In \cite{brylinski4} it is incorrectly stated \cite{brylinski3} that this reciprocity law also holds for non-complex Lie  groups. Indeed, we provide a counterexample to that statement and prove that only a \emph{weaker} version of the reciprocity law holds for    transgressive central extensions of general Lie groups. We also provide an example of a central extension that   is not transgressive and yet satisfies this weaker version of the reciprocity law. We come to the conclusion that no version of the reciprocity law  appropriately characterizes transgressive central extensions of general Lie groups. It was the main motivation for writing this article to attack the open characterization problem  from a different angle, namely via fusion and thin homotopy equivariance.

The results of this article are based on previous work on transgression    \cite{waldorf9,waldorf10,waldorf11}. A summary of these three papers on only three pages can be found in \cite[Section 1]{waldorf11}. The main result is that  transgression for general smooth manifolds $X$ establishes an equivalence between various categories of gerbes over $X$ and corresponding categories of $S^1$-bundles over the free loop space $LX$, equipped with structure rendering them compatible with fusion and thin homotopy.
The present paper  can be seen as an extension of these results to a multiplicative setting.

The organization of the present paper is as follows. In Section \ref{sec:basics} we introduce  the basic definitions of fusion and thin homotopy equivariance in loop group extensions. In Section \ref{sec:features} we provide a list of features that follow from the presence of these structures, among them    disjoint-commutativity  (Theorem \ref{th:localcomm}). In Section \ref{sec:fusextth} we formulate an integrability condition for thin homotopy equivariant structures on which our notion of thin fusion extensions is based (Definition \ref{def:thinfusionext}).
In Section \ref{sec:transgression} we discuss multiplicative bundle gerbes and their transgression, and introduce our definition of transgressive central extensions (Definition \ref{def:transgressive}). Then we give a proof of Theorem \ref{th:main} (split into Proposition \ref{prop:onlyif} and Corollaries \ref{co:if}, \ref{co:maintext}). In Section \ref{sec:segalwitten} we prove our weaker version of the  Segal-Witten reciprocity law (Theorem \ref{th:recprop}) and provide the two examples that indicate the above-mentioned problems  (Examples \ref{ex:nonmultsplitting} and \ref{ex:nonchar}).

Throughout the paper, we continuously look at two classes of examples: an explicit model of the universal central extension of the loop group of a compact simply-connected Lie group, and various central extensions of  $L\ueins$, of which some turn out to be transgressive and others not.
For the convenience of the reader we include on Page \pageref{sec:tableterm} a table summarizing some terminology we use in this paper.

\paragraph{Acknowledgements.} This work is supported by the DFG network \quot{String Geometry} (project code 594335). 
%I thank  Alan Carey, Michael Murray, and Danny Stevenson for helpful correspondences. %I thank André Henriques for discussions and for proposing to prove that %disjoint-commutativity is a property of transgressive central extensions.
%I thank Bas Janssens for carefully reading the first version of this paper %and for helping to improve it. 

\setsecnumdepth{1}

\section{Fusion and thin homotopy equivariance over loop groups}

\label{sec:basics}

In this section $G$ is a Lie group and 
\begin{equation*}
1 \to \ueins \to \mathcal{L} \to LG \to 1
\end{equation*}
is a central extension of Fréchet Lie groups \cite{pressley1}. We introduce structures on $\mathcal{L}$ that lift loop fusion and thin homotopy, and discuss the interplay between them.

\begin{comment}
\subsection{Fusion products}
\end{comment}

By $PG$ we denote the set of paths $\gamma:[0,1] \to G$ with sitting instants, i.e. they are constant near the endpoints.  $PG$ is not a Fréchet manifold, but can be treated as a \emph{diffeological space}. Instead of charts, a diffeological space $X$ has plots: maps $c:U \to X$ defined on open subsets $U\subset\R^{n}$, for all $n\in \N$, satisfying three natural axioms, see e.g. \cite{iglesias1}. In case of $PG$, a map $c:U \to PG$ is a plot if and only if $U \times [0,1] \to G\maps (x,t)\mapsto c(x)(t)$ is  smooth.\footnote{A referee suggested  to use  paths all of whose
higher derivatives vanish at the end-points, as opposed to sitting
instants,
as this would simplify some of the arguments in Sections \ref{sec:loopconcat} and \ref{sec:discomm}. In order to stay consistent with my other papers \cite{waldorf9,waldorf10,waldorf11}, on which Sections \ref{sec:fusextth} and \ref{sec:transgression} rely on, I have decided to stick to sitting instants.}

We remark that Fréchet manifolds embed fully faithfully into diffeological spaces \cite{losik1}. Thus, every Fréchet manifold can be regarded as a diffeological space (the plots are are just all smooth maps), and a map between two Fréchet manifolds is smooth in the Fréchet sense if and only if it is smooth in the diffeological sense.

We denote by $PG^{[k]}$ the $k$-fold fibre product of $PG$ over the evaluation map 
\begin{equation*}
\ev\maps PG \to G \times G\maps \gamma \mapsto (\gamma(0),\gamma(1))\text{,}
\end{equation*}
i.e. $PG^{[k]}$ consists of $k$-tuples of paths all sharing a common initial point and a common end point. 
Due to the sitting instants, we have a well-defined smooth map
\begin{equation*}
\lop\maps  \p G^{[2]} \to L G\maps  (\gamma_1,\gamma_2) \mapsto \prev{\gamma_2} \pcomp \gamma_1\text{,}
\end{equation*}
where $\pcomp$ denotes the path concatenation, and $\prev{\gamma}$ denotes the reversed path. The set $PG^{[3]}$ is the modelling space for loop fusion: if $(\gamma_1,\gamma_2,\gamma_3)\in PG^{[3]}$, then we have the two loops $\tau_{12} := \gamma_1 \lop \gamma_2$ and $\tau_{23} := \gamma_2 \lop \gamma_3$ which have the common segment $\gamma_2$. Its deletion gives the new loop $\tau_{13} :=\gamma_1 \lop \gamma_3$.
Loop fusion is multiplicative and strictly associative. 

\begin{definition}
\label{def:fusion} 
\begin{enumerate}[(a)]
\item
A \emph{fusion product} on $\mathcal{L}$ is a smooth bundle morphism
\begin{equation*}
\lambda: \pr_{12}^{*}\lop^{*}\mathcal{L} \otimes \pr_{23}^{*}\lop^{*}\mathcal{L} \to \pr_{13}^{*}\lop^{*}\mathcal{L}
\end{equation*}
over $PG^{[3]}$
that is associative in the sense that 
\begin{equation*}
\lambda(\lambda(p_{12} \otimes p_{23} ) \otimes p_{34} ) = \lambda(p_{12}\otimes \lambda(p_{23} \otimes p_{34}))
\end{equation*} 
for all $p_{ij} \in \mathcal{L}_{\gamma _i \lop \gamma_j}$ and all $(\gamma_1,\gamma_2,\gamma_3,\gamma_4) \in PG^{[4]}$.

\item
A fusion product $\lambda$ is called \emph{multiplicative} if
\begin{equation*}
\lambda_{}(p_{12}^{} \otimes p_{23}^{}) \cdot \lambda(p_{12}' \otimes p_{23}') = \lambda(p^{}_{12}p_{12}' \otimes p^{}_{23}p_{23}')
\end{equation*}
for all elements $p_{ij}\in \mathcal{L}_{\gamma_i \lop \gamma_j}$, $p_{ij}'\in \mathcal{L}_{\gamma_i \lop \gamma_j'}$ and all $(\gamma_1,\gamma_2,\gamma_3),(\gamma_1',\gamma_2',\gamma_3') \in PG^{[3]}$.
\end{enumerate}
\end{definition}

Here $\pr_{ij}: PG^{[3]} \to PG^{[2]}$ is the projection to the indexed factors, and $\mathcal{L}_{\tau}$ denotes the fibre of $\mathcal{L}$ over a loop $\tau \in LG$.
\label{par:fuspres}
If $\mathcal{L}'$ is another central extension equipped with a fusion product $\lambda'$, then an isomorphism $\varphi:\mathcal{L} \to \mathcal{L}'$ is called \emph{fusion-preserving}, if $\varphi(\lambda(p_{12} \otimes p_{23}))=\lambda'(\varphi(p_{12}) \otimes \varphi(p_{23}))$
for all elements $p_{ij}\in \mathcal{L}_{\gamma_i \lop \gamma_j}$ and all $(\gamma_1,\gamma_2,\gamma_3) \in PG^{[3]}$.

\begin{comment}
\subsection{Thin homotopy equivariance}
\end{comment}

A homotopy between loops is the same thing as a path in the loop space: if  $\gamma:[0,1] \to LG$ is a path, then
\begin{equation*}
h_{\gamma}:[0,1] \times S^1 \to G: (t,z) \mapsto \gamma(t)(z) \end{equation*}
is the corresponding homotopy.
The rank of $h$  can at most be two. If it is \emph{less} than two  we call  the path $\gamma$ and the homotopy $h_{\gamma}$  \emph{thin}. A path $\gamma$ is thin if and only if $h_{\gamma}^{*}\omega=0$ for all 2-forms $\omega\in\Omega^2(G)$, this leads to the saying that thin homotopies  \quot{sweep out a surface of zero area}. 

\begin{comment}
Let $\diff^{+}(S^1)$ denote the Fréchet Lie group of orientation-preserving diffeomorphisms of $S^1$. It is connected, i.e. for every $\varphi\in \diff^{+}(S^1)$ there exists a smooth family $\varphi_t$ with $\varphi_0=\id_{S^1}$ and $\varphi_1=\varphi$. If $\tau \in LG$ is a loop, then $\gamma(t) := \tau\circ \varphi_t$ is a path between $\tau$ and $\tau \circ \varphi$, and it is thin because the corresponding homotopy it factors through $\varphi_t(z) \in S^1$, which is one-dimensional. In other words, the $\diff^{+}(S^1)$-action on $LG$ preserves the thin homotopy class. 
\end{comment}

\begin{comment}
The group structure of $LG$ is quite incompatible with  thin homotopies. If $(\tau_0, \tau_1)$ and $(\gamma_0, \gamma_1)$ are pairs of thin homotopic loops in $LG$, then $\tau_0\gamma_0$ and $\tau_1\gamma_1$ are in general not thin homotopic. They are only thin homotopic, if we find  paths  $\tau$ and $\gamma$ in $LG$ connecting them such that $(\tau,\gamma)$ is a thin path in $G \times G$. That $\tau$, $\gamma$ and $\tau\gamma$ are thin paths in $G$ is necessary, but not sufficient. 
\end{comment}

We denote by $\thinpairs {LG} \subset LG \times LG$ the diffeological space  consisting of pairs $(\tau_1,\tau_2)$ of thin homotopic loops, i.e. there exists a  homotopy $h: [0,1] \times S^1 \to G$  of rank one. The plots are smooth maps $c:U \to \thinpairs{LG}$ such that locally the thin homotopies can be chosen in smooth families, see \cite[Section 3.1]{waldorf11}.

\begin{definition}
\label{def:thes}
\begin{enumerate}[(a)]
\item 
\label{def:thes:a}
A \emph{thin homotopy equivariant structure} on $\mathcal{L}$ is a smooth bundle isomorphism
\begin{equation*}
d: \mathrm{pr}_1^{*}\mathcal{L} \to \mathrm{pr}_2^{*}\mathcal{L}
\end{equation*}
over $\thinpairs {LG}$ that satisfies the  cocycle condition
$d_{\tau_2,\tau_3} \circ d_{\tau_1,\tau_2} = d_{\tau_1,\tau_3}$
for any triple $(\tau_1,\tau_2,\tau_3)$ of thin homotopic loops.

\item
\label{def:thes:b}
A thin homotopy equivariant structure $d$ is called \emph{multiplicative} if
\begin{equation*}
d_{\tau_0\gamma_0,\tau_1\gamma_1}(p \cdot q)=d_{\tau_0,\tau_1}(p)\cdot d_{\gamma_0,\gamma_1}(q)
\end{equation*}
for all $((\tau_0,\gamma_0),(\tau_1,\gamma_1))\in \thinpairs{L(G\times G)}$ and all $p\in \mathcal{L}_{\tau_0}$, $q\in \mathcal{L}_{\gamma_0}$.

\end{enumerate}
\end{definition}

Note that $((\tau_0,\gamma_0),(\tau_1,\gamma_1))\in \thinpairs{L(G\times G)}$ means that there exists a thin path $(\tau,\gamma)$ in $L(G \times G)$ connecting $(\tau_0,\gamma_0)$ with $(\tau_1,\gamma_1)$. It is necessary, but not sufficient, that the paths $\tau$, $\gamma$, and $\tau\gamma$ in $LG$ are separately thin.

\label{par:thinbm}
If $\mathcal{L}'$ is another central extension equipped with a thin homotopy equivariant structure $d'$, then an isomorphism $\varphi:\mathcal{L} \to \mathcal{L}'$ is called \emph{thin} if $\varphi(d_{\tau_0,\tau_1}(p))=d'_{\tau_0,\tau_1}(\varphi(p))$.

A thin homotopy equivariant structure $d$ induces an equivariant structure on $\mathcal{L}$ for the action of the group $\diff^{+}(S^1)$ of orientation-preserving diffeomorphisms of the circle on $LG$ by pre-composition. Indeed, suppose $\varphi$ is an orientation-preserving diffeomorphism, $\tau \in LG$ and $p \in \mathcal{L}_{\tau}$. Since $\diff^{+}(S^1)$ is connected, there exists a path $\varphi_t\in \diff^{+}(S^1)$ with $\varphi_0=\id_{S^1}$ and $\varphi_1=\varphi$. The map $\gamma:[0,1] \to LG\maps t \mapsto \tau \circ \varphi_t$ is a thin path; in particular, $\tau$ and $\tau \circ \varphi$ are thin homotopic. We define
\begin{equation}
\label{eq:diffaction}
p \cdot \varphi := d_{\tau,\tau\circ\varphi}(p) \in \mathcal{L}_{\tau \circ \varphi}\text{.}
\end{equation}

\begin{lemma}
\label{lem:equiv}
Formula \erf{eq:diffaction} defines a smooth action of $\diff^{+}(S^1)$ on $\mathcal{L}$. If $d$ is multiplicative, it is an action by group homomorphisms and acts trivially on the  central $\ueins$-subgroup of $\mathcal{L}$.
\end{lemma}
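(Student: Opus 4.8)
The plan is to read the action axioms and the triviality on $\ueins$ off the cocycle condition of Definition~\ref{def:thes}(a), to reduce smoothness to that of $d$ together with a smoothness claim for an auxiliary map into $\thinpairs{LG}$, and finally to deduce the homomorphism property from the multiplicativity of $d$. First I would note that \erf{eq:diffaction} involves no arbitrary choice: a path $\varphi_t$ from $\id_{S^1}$ to $\varphi$ is used only to witness that $(\tau,\tau\circ\varphi)\in\thinpairs{LG}$ — the homotopy $(s,z)\mapsto\tau(\varphi_s(z))$ factors through $\varphi_s(z)\in S^1$, so its rank is at most one — whereas the value $d_{\tau,\tau\circ\varphi}(p)$ depends on the pair alone, $d$ being a bundle morphism over $\thinpairs{LG}$. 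Setting $\tau_1=\tau_2=\tau_3$ in the cocycle condition gives $d_{\tau,\tau}=\id$, hence $p\cdot\id_{S^1}=p$; and applying the cocycle condition to the triple $(\tau,\tau\circ\varphi,\tau\circ\varphi\circ\psi)$ of pairwise thin homotopic loops gives $(p\cdot\varphi)\cdot\psi=d_{\tau\circ\varphi,\,\tau\circ\varphi\circ\psi}(d_{\tau,\tau\circ\varphi}(p))=d_{\tau,\tau\circ\varphi\circ\psi}(p)=p\cdot(\varphi\circ\psi)$. Thus \erf{eq:diffaction} defines an action of $\diff^{+}(S^1)$ on $\mathcal{L}$, and each $p\mapsto p\cdot\varphi$ is a bijection with inverse $p\mapsto p\cdot\varphi^{-1}$.

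Next, for smoothness I would exhibit the action as a composite of smooth maps. The assignment $F\maps(\varphi,p)\mapsto(\pi(p),\pi(p)\circ\varphi)$, with $\pi\maps\mathcal{L}\to LG$ the projection, is a morphism of diffeological spaces $\diff^{+}(S^1)\times\mathcal{L}\to\thinpairs{LG}$: precomposition $\diff^{+}(S^1)\times LG\to LG$ is smooth, and the plot condition of $\thinpairs{LG}$ is met because, locally around any $\varphi_0\in\diff^{+}(S^1)$, there is a smooth family of paths $\varphi\mapsto(\varphi_s)_{s\in[0,1]}$ from $\id_{S^1}$ to $\varphi$ — obtained from a local chart of the Fr\'echet Lie group $\diff^{+}(S^1)$ near $\id_{S^1}$, concatenation with a fixed path to $\varphi_0$, and reparametrization to arrange sitting instants — so that the thin homotopies $(s,z)\mapsto\pi(p)(\varphi_s(z))$ form a smooth family in the sense of \cite[Section~3.1]{waldorf11}. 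Applying the pullback $F^{*}d$ to the canonical section $(\varphi,p)\mapsto p$ of $F^{*}\pr_1^{*}\mathcal{L}$ returns precisely $(\varphi,p)\mapsto p\cdot\varphi$, which is therefore smooth; restricting to $\{\varphi\}\times\mathcal{L}$, each $p\mapsto p\cdot\varphi$ is in addition a smooth diffeomorphism of $\mathcal{L}$.

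Finally, suppose $d$ is multiplicative. For $p\in\mathcal{L}_{\tau}$ and $q\in\mathcal{L}_{\sigma}$ the path $s\mapsto(\tau\circ\varphi_s,\sigma\circ\varphi_s)$ in $L(G\times G)$ is thin, its homotopy again factoring through $\varphi_s(z)\in S^1$, so $((\tau,\sigma),(\tau\circ\varphi,\sigma\circ\varphi))\in\thinpairs{L(G\times G)}$, and Definition~\ref{def:thes}(b) gives $(pq)\cdot\varphi=d_{\tau\sigma,\,(\tau\circ\varphi)(\sigma\circ\varphi)}(pq)=d_{\tau,\tau\circ\varphi}(p)\cdot d_{\sigma,\sigma\circ\varphi}(q)=(p\cdot\varphi)(q\cdot\varphi)$; with the previous paragraph this shows $p\mapsto p\cdot\varphi$ is an automorphism of the Fr\'echet Lie group $\mathcal{L}$. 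Since the neutral element $1\in LG$ satisfies $1\circ\varphi=1$, the central subgroup $\ueins=\mathcal{L}_{1}$ is preserved, and on it the action equals $d_{1,1}=\id$, so $\diff^{+}(S^1)$ acts trivially on $\ueins$ — in fact this last point does not even need multiplicativity. The only genuine work is the smoothness of the map $F$ into $\thinpairs{LG}$ in the second paragraph, i.e.\ the existence of smooth families of thin homotopies realizing the pairs $(\tau,\tau\circ\varphi)$; everything else is a formal manipulation of the cocycle and multiplicativity identities.
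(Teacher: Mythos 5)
Your proof is correct and follows essentially the same route as the paper's: the action axioms and the triviality on $\ueins$ are read off the cocycle condition (constant loops are fixed by precomposition, so $d_{\tau,\tau}=\id$ applies), the homomorphism property comes from multiplicativity applied to the thin path $s\mapsto(\tau\circ\varphi_s,\sigma\circ\varphi_s)$ in $L(G\times G)$, and smoothness comes from $d$ being a smooth bundle isomorphism over $\thinpairs{LG}$. The only difference is that you make explicit the plot condition for the map $(\varphi,p)\mapsto(\pi(p),\pi(p)\circ\varphi)$ into $\thinpairs{LG}$, a point the paper compresses into its one-sentence smoothness remark.
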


\begin{proof}
That it is an action follows from the cocycle condition for $d$. It is smooth because $d$ is a smooth bundle isomorphism over $\thinpairs{LG}$. If $d$ is multiplicative, we compute
for $p_1\in \mathcal{L}_{\tau_1}$ and $p_2\in \mathcal{L}_{\tau_2}$ 
\begin{equation*}
(p_1p_2) \cdot \varphi = d_{\tau_1\tau_2,(\tau_1 \tau_2) \circ \varphi}(p_1p_2) = d_{\tau_1\tau_2,(\tau_1  \circ \varphi)(\tau_2 \circ \varphi)}(p_1p_2)= d_{\tau_1,\tau_1  \circ \varphi}(p_1) d_{\tau_2,\tau_2  \circ \varphi}(p_2)\text{.}
\end{equation*}
The restriction of the $\diff^{+}(S^1)$-action on $LG$ to constant loops is trivial. So if $p\in \mathcal{L}$ projects to a constant loop, we have $p\cdot\varphi= p$.       In particular, $\diff^{+}(S^1)$ acts trivially on $\ueins$. 
\end{proof}

\begin{comment}
\subsection{Compatibility and symmetrization}
\end{comment}

\begin{definition}
\label{def:comp}
Suppose $\mathcal{L}$ is equipped with a fusion product $\lambda$ and  a thin homotopy equivariant structure $d$.
\begin{enumerate}[(a)]
\item 
\label{def:comp:a}
We say that $d$ is \emph{compatible} with  $\lambda$, if  for all  paths $(\gamma_1,\gamma_2,\gamma_3) \in P(PG^{[3]})$ such that the three paths $t \mapsto \gamma_i(t) \lop \gamma_j(t)\in LG$ are  thin, the diagram
\begin{equation*}
\alxydim{@=\xyst@R=\xyst}{\mathcal{L}_{\gamma_1(0) \lop\gamma_2(0)} \ar[d]_{d \otimes d} \otimes \mathcal{L}_{\gamma_2(0) \lop \gamma_3(0)} \ar[r]^-{\lambda} & \mathcal{L}_{\gamma_1(0) \lop \gamma_3(0)} \ar[d]^{d} \\\mathcal{L}_{\gamma_1(1) \lop \gamma_2(1)} \otimes \mathcal{L}_{\gamma_2(1) \lop \gamma_3(1)} \ar[r]_-{\lambda} & \mathcal{L}_{\gamma_1(1) \lop \gamma_3(1)}}
\end{equation*}
is commutative. 

\item
\label{item:symmetrizes}
We say that $d$ \emph{symmetrizes} $\lambda$
if for all $(\gamma_1,\gamma_2,\gamma_3) \in PG^{[3]}$ and all $p\in \mathcal{L}_{\gamma_1 \lop \gamma_2}$ and $p' \in \mathcal{L}_{\gamma_2 \lop \gamma_3}$
\begin{equation*}
d_{\gamma_1 \lop \gamma_3,\prev{\gamma_3} \lop \prev{\gamma_1}}(\lambda(p \otimes p')) = \lambda(d_{\gamma_2 \lop \gamma_3,\prev{\gamma_3} \lop \prev{\gamma_2}}(p') \otimes d_{\gamma_1 \lop \gamma_2,\prev{\gamma_2} \lop \prev{\gamma_1}}(p))\text{.}
\end{equation*}

\item
\label{def:comp:fusive}
We say that $d$ is \emph{fusive} with respect to $\lambda$, if it is compatible and symmetrizing,

\end{enumerate}
\end{definition}

\noindent
For \erf{item:symmetrizes} we remark that if $r_{\pi}\in \diff^{+}(S^1)$ denotes the rotation by an angle of $\pi$, then $(\gamma_i \lop \gamma_j)\circ r_{\pi} = (\prev{\gamma_j} \lop \prev{\gamma_i})$; in particular, $\gamma_i \lop \gamma_j$ and $\prev{\gamma_j}\lop \prev{\gamma_i}$ are thin homotopic.

\begin{example}
\label{ex:trivial}
\label{ex:toptrivial}
Suppose $P$ is a principal $\ueins$-bundle over $G \times G$ with connection, such that there exists a connection-preserving isomorphism
\begin{equation}
\label{eq:bundleiso}
P_{g_1,g_2}\otimes P_{g_1g_2,g_3} \cong P_{g_2,g_3}\otimes P_{g_1,g_2g_3}
\end{equation}
over $G \times G \times G$. The subscript notation is that, for instance, $P_{g_1g_2,g_3}$ is the pullback of $P$ along the map $(g_1,g_2,g_3) \mapsto (g_1g_2,g_3)$. 
The holonomy of $P$ is a smooth map
$\eta:LG \times LG \to \ueins$
such that 
\begin{equation*}
\eta(\tau_1\tau_2,\tau_3) \cdot \eta(\tau_1,\tau_2)=\eta(\tau_1,\tau_2\tau_3)\cdot\eta(\tau_2,\tau_3)
\end{equation*}
for all $\tau_1,\tau_2,\tau_3\in LG$. Thus, $\eta$ is a 2-cocycle in the smooth group cohomology of $LG$. It defines a group structure on $\mathcal{L}_P:= \ueins \times LG$ via $(z_1,\tau_1) \cdot (z_2,\tau_2) := (z_1z_2\eta(\tau_1,\tau_2),\tau_1\tau_2)$, making $\mathcal{L}_P$ a central extension of $LG$.
\begin{comment}
The cocycle condition implies (via $\tau_1=\tau$, $\tau_2=\tau_3=1$) $\eta(\tau,1)=\eta(1,1)$ and (via $\tau_1=\tau_2=1$, $\tau_3=\tau$) $\eta(1,1) =\eta(1,\tau)$. Further (via $\tau_1=\tau_3=\tau$, $\tau_2=\tau^{-1}$) $\eta(\tau,\tau^{-1})=\eta(\tau^{-1},\tau)$. The identity element is $(\eta(1,1)^{-1},1)$, as we have $(z,\tau)\cdot(\eta(1,1)^{-1},1)=(z\eta(1,1)^{-1}\eta(\tau,1),1)=(z,1)$. The inverse of an element $(z,\tau)$ is $(z^{-1}\eta(\tau,\tau^{-1})^{-1}\eta(1,1)^{-1},\tau^{-1})$.

Note that one can always find a cohomologous cocycle $\eta'$ with $\eta'(1,1)=1$, by using the 1-cocycle $\varepsilon(\tau):= \eta(1,1)^{-1}$. Indeed, $\eta'(1,1)=\eta(1,1)\Delta\varepsilon(1,1)=\eta(1,1)\varepsilon(1)\varepsilon(1)\varepsilon(1)^{-1}$. 

If $G$ is abelian then $\Delta\varepsilon(\tau_1,\tau_2)=\varepsilon(\tau_1)\varepsilon(\tau_2)\varepsilon(\tau_1\tau_2)^{-1}=\varepsilon(\tau_2)\varepsilon(\tau_1)\varepsilon(\tau_2\tau_1)^{-1}=\Delta\varepsilon(\tau_2,\tau_1)$. In particular, if $\eta=\Delta\varepsilon$, then it follows $\eta(\tau_1,\tau_2)=\eta(\tau_2,\tau_1)$. Also, if $\eta$ and $\eta'$ are cohomologous, the we have $\eta(\tau_1,\tau_2)\eta'(\tau_2,\tau_1)=\eta(\tau_2,\tau_1)\eta'(\tau_1,\tau_2)$. 
\end{comment}
As the holonomy of a bundle, $\eta$ is a fusion map in the sense of \cite{waldorf9}, i.e. it satisfies
\begin{equation*}
\eta(\gamma_1\lop\gamma_3,\gamma_1'\lop\gamma_3') 
\\= \eta(\gamma_1\lop\gamma_2,\gamma_1'\lop\gamma_2')\cdot \eta(\gamma_2\lop\gamma_3,\gamma_2'\lop\gamma_3')\text{.}
\end{equation*}
for all $(\gamma_1,\gamma_2,\gamma_3),(\gamma_1',\gamma_2',\gamma_3') \in PG^{[3]}$. This is equivalent to the statement that the trivial fusion product $\lambda((z_{12},\gamma_1 \lop \gamma_2)\otimes (z_{23},\gamma_2 \lop\gamma_3)):=(z_{12}z_{23},\gamma_1\lop\gamma_3)$ is multiplicative. 
\begin{comment}
A fusion product on $\mathcal{L}$ is the same as a smooth map $\lambda\maps PG^{[3]} \to S^1$ such that 
\begin{equation*}
\lambda(\gamma_1,\gamma_3,\gamma_4)\cdot \lambda(\gamma_1,\gamma_2,\gamma_3)=\lambda(\gamma_1,\gamma_2,\gamma_4)\cdot \lambda(\gamma_2,\gamma_3,\gamma_4)
\end{equation*}
for all $(\gamma_1,\gamma_2,\gamma_3,\gamma_4)\in PG^{[4]}$. It is multiplicative if and only if
\begin{multline*}
\omega(\gamma_1\lop\gamma_3,\gamma_1'\lop\gamma_3')\cdot \lambda_{}(\gamma_1,\gamma_2,\gamma_3) \cdot \lambda(\gamma_1',\gamma_2',\gamma_3') 
\\= \lambda(\gamma_1\gamma_1',\gamma_2\gamma_2',\gamma_3\gamma_3')\cdot \omega(\gamma_1\lop\gamma_2,\gamma_1'\lop\gamma_2')\cdot \omega(\gamma_2\lop\gamma_3,\gamma_2'\lop\gamma_3')
\end{multline*}
for all $(\gamma_1,\gamma_2,\gamma_3),(\gamma_1',\gamma_2',\gamma_3') \in PG^{[3]}$.
If $\omega$ is trivial, this means that $\lambda$ is a group homomorphism. If $\omega$ is non-trivial, it is not clear that multiplicative fusion products exist. For example, the trivial fusion product $\lambda \equiv 1$ is  multiplicative if and only if $\omega$  satisfies the fusion relation
\begin{equation*}
\omega(\gamma_1\lop\gamma_3,\gamma_1'\lop\gamma_3') 
\\= \omega(\gamma_1\lop\gamma_2,\gamma_1'\lop\gamma_2')\cdot \omega(\gamma_2\lop\gamma_3,\gamma_2'\lop\gamma_3')\text{.}
\end{equation*}
\end{comment}
Likewise, we have the trivial thin homotopy equivariant structure $d(z,\tau_0):=(z,\tau_1)$ for each  $(\tau_0,\tau_1)\in \thinpairs{LG}$.It is  fusive with respect to the trivial fusion product, and it is multiplicative with respect to the group structure defined by $\eta$ because the holonomy of a connection only depends on the thin homotopy class of a loop. 

As a concrete example of this construction, one can take  $G=\ueins$ and $P$ the Poincaré bundle over $T:=\ueins \times \ueins$, equipped with its canonical connection. In differential cohomology, $P\in \hat H^2(T)$ is the cup product of the two projections  $\pr_1,\pr_2: T \to \ueins$ regarded as elements in $\hat H^1(T)$. This implies that the Poincaré bundle has an isomorphism \erf{eq:bundleiso}. Its holonomy can be described in the following way. If $\tau \in L\ueins$, let $n\in \Z$ be the winding number of $\tau$. One can find a smooth map $f:\R \to \R$ such that $f(t+1)=f(t)+n$ and $\tau=\mathrm{e}^{2\pi \im f}$. For $\tau=(\tau_1,\tau_2)\in LT$, we get
\begin{equation*}
\eta(\tau_1,\tau_2)=\mathrm{Hol}_{P}(\tau) =\exp 2\pi\im \left ( n_1f_2(0)- \int_0^1 f_1(s)f_2'(s) \mathrm{d}s \right )\text{.} 
\end{equation*}
\begin{comment}
The Poincaré bundle has the total space 
\begin{equation*}
P := (\R \times \R \times S^1) \;/\; \sim
\end{equation*}
with $(a,b,t)\sim (a',b',t'):=(a+n,b+m,nb+t)$ for all $n,m\in \Z$, the projection $(a,b,t) \mapsto (a,b)$, and the $S^1$-action $(a,b,t)\cdot s := (a,b,t+s)$.
With $A_{a,b} := a\mathrm{d}b$ a 1-form on $\R \times \R$ we can check that $\omega_{a,b,t} := A_{a,b}-\mathrm{d}t \in \Omega^1(\R \times \R \times S^1)$ descends and gives a connection on $P$.
If $\tau=(\tau_1,\tau_2)\in LT$, then 
\begin{equation*}
f:\R \to P : t \mapsto (f_1(t),f_2(t), \exp 2\pi \im   \int_0^t f_1(s)f_2'(s) \mathrm{d}s )
\end{equation*}
is a lift. We compute
\begin{equation*}
\omega(f'(t)) = A(f'_1(t),f'_2(t)) -f_1(t)f_2'(t) = 0\text{,}
\end{equation*}
i.e. the lift $f$ is horizontal. Further, we have 
\begin{multline*}
f(1)=(f_1(1),f_2(1), \int_0^1 f_1(s)f_2'(s) \mathrm{d}s)=(f_1(0)+n_1,f_2(0)+n_2,n_1f_2(0)-n_1f_2(0)+ \int_0^1 f_1(s)f_2'(s) \mathrm{d}s) \\\sim(f_1(0),f_2(0),-n_1f_2(0)+ \int_0^1 f_1(s)f_2'(s) \mathrm{d}s)=f(0)-n_1f_2(0)+ \int_0^1 f_1(s)f_2'(s) \mathrm{d}s\text{.}
\end{multline*}
This shows the holonomy formula.
\end{comment}
Using this formula, we obtain a central extension of $L\ueins$, equipped with a multiplicative fusion product and a multiplicative and fusive thin homotopy equivariant structure. 
\end{example}

\begin{example}
\label{ex:mickelsson}
\label{ex:wzwmodel}
Let $G$ be a compact, simple, connected, simply-connected  Lie group, so that $LG$ has a universal central extension \cite{pressley1}. It can be realized by the following model of Mickelsson \cite{mickelsson1}. 
We consider pairs $(\phi,z)$ where $\phi\maps D^2 \to G$ is a smooth map that is radially constant near the boundary, and $z\in \ueins$. \begin{comment}
For technical reasons, we require that $\phi$ is radially constant near the boundary, i.e.  there exists  $\varepsilon>0$ such that $\phi(r \mathrm{e}^{2\pi\im\varphi})=\phi(\mathrm{e}^{2\pi\im\varphi})$ for all  $1-\varepsilon<r\leq 1$. 
If $\phi$ is any smooth map, one can always make it radially constant. For this purpose, consider a smooth map $\psi\maps  [0,1] \to [0,1]$ with $\psi(r)=0$ for $0\leq r < \varepsilon$ and $\psi(r)=1$ for $r> 1-\varepsilon$. It induces a smooth map $\psi_{D}\maps  D^2 \to D^2\maps  r\mathrm{e}^{2\pi\im \varphi} \mapsto \psi(r) \cdot \mathrm{e}^{2\pi\im \varphi}$. Then, the composite $\phi \circ \psi_D$ is radially constant: if $r>1-\varepsilon$, we have
\begin{equation*}
(\phi \circ \psi_D)(r\mathrm{e}^{2\pi\im\varphi}) = \phi(\psi(r)  \mathrm{e}^{2\pi\im\varphi})= \phi(  \mathrm{e}^{2\pi\im\varphi})\text{.}
\end{equation*} 
\end{comment}
We impose the following equivalence relation:
\begin{equation*}
(\phi,z) \sim (\phi',z') 
\quad\Leftrightarrow\quad
\partial\phi = \partial\phi'
\quad\text{ and }\quad
z = z' \cdot \mathrm{e}^{2\pi\im S_{\mathrm{WZ}}(\Phi)}\text{.}
\end{equation*}
Here, $\partial\phi\in LG$ denotes the restriction of $\phi$ to the boundary, and  $\Phi\maps  S^2 \to G$ is the  map defined on the northern hemisphere by $\phi$ (with the orientation-preserving identification) and on the southern hemisphere by $\phi'$ (with the orientation-reversing identification). 
\begin{comment}
Due to the above technical assumptions, this gives a smooth map.
We use the open sets:
\begin{eqnarray*}
S^2_+ &\df& S^2 \cap \R^3_{z>0}
\\
S^2_- &\df& S^2 \cap \R^3_{z<0}
\\
Ä_{\varepsilon} &\df& \left \lbrace (x,y,z) \in S^2 \;|\; x^2+y^2 > (1-\varepsilon)^2 \right \rbrace
\end{eqnarray*}
Here, $\varepsilon$ is the minimum of the two $\varepsilon$ of the maps $\phi$ and $\phi'$. 
On these we have smooth maps:
\begin{eqnarray*}
p_{\pm}&:& S^2_{\pm} \to D^2\maps  (x,y,z) \mapsto (x,y)
\\
p_{Ä}&:& Ä_{\varepsilon} \to S^1\maps  (x,y,z) \mapsto \frac{(x,y)}{\sqrt{x^2+y^2}}
\end{eqnarray*}
We oriented the 2-sphere with \emph{outward} pointing normal vector, so that $p_{+}$ is orientation-preserving and $p_{-}$ is orientation-reversing.
We denote by $\tau\maps S^1 \to G$ the loop with $\tau=\partial\phi=\partial\phi'$ by assumption, and consider the smooth maps
\begin{equation*}
\phi \circ p_+
\quomma
\phi' \circ p_-
\quand
\tau \circ p_{Ä}\text{.}
\end{equation*}
We claim that they coincide on the overlaps $S^2_\pm \cap Ä_{\varepsilon}$ and so give a smooth map $\Phi$. Indeed, we have for $(x,y,z)$ with $z> 0$ and $x^2+y^2> (1-\varepsilon)^2$:
\begin{equation*}
(\tau \circ p_{Ä})(x,y,z) = \tau(\frac{(x,y)}{\sqrt{x^2+y^2}})= \phi(\frac{(x,y)}{\sqrt{x^2+y^2}})= \phi((x,y)) = (\phi \circ p_+)(x,y,z)
\end{equation*} 
and the same for $\phi'$ over $z<0$.
\end{comment}
The symbol  $S_{\mathrm{WZ}}$ stands for the \emph{Wess-Zumino term} defined as follows. Because $G$ is 2-connected, the map $\Phi$ can be extended to a smooth map $\tilde\Phi\maps  D^3 \to G$ defined on the solid ball. Then,
\begin{equation}
\label{eq:H}
S_{\mathrm{WZ}}(\Phi) \df \int_{D^3} \tilde\Phi^{*}H
\quith 
H\df\frac{1}{6} \left \langle \theta \wedge [\theta \wedge \theta]  \right \rangle \in \Omega^3(G)\text{.}
\end{equation}
Here, $\theta \in \Omega^1(G,\mathfrak{g})$ is the left-invariant Maurer-Cartan from on $G$. The bilinear form $\left \langle -,-  \right \rangle$ is  normalized such that the closed 3-form $H$ represents a generator $\h^3(G,\Z)\cong \Z$. Now, the total space of the principal $\ueins$-bundle $\wzwmodel$ is the set of equivalence classes of  pairs $(\phi,z)$. 
The bundle projection sends $(\phi,z)$  to $\partial\phi\in LG$, and the $\ueins$-action is given by multiplication in the $\ueins$-component. 
The group structure on $\wzwmodel$ turning it into a central extension is given by the  \emph{Mickelsson product}   \cite{mickelsson1}:
\begin{equation*}
\wzwmodel \times \wzwmodel \to \wzwmodel\maps  ((\phi_1, z_1), (\phi_2, z_2)) \mapsto (\phi_1\phi_2, z_1z_2 \cdot \exp 2\pi\im \left (-\int_{D^2} (\phi_1,\phi_2)^{*}\rho \right )  )  \text{,}
\end{equation*}
where $\rho$ is defined by
\begin{equation}
\label{eq:rho}
\rho \df  \frac{1}{2}\left \langle  \pr_1^{*}\theta \wedge \pr_2^{*}\bar\theta  \right \rangle \in \Omega^2(G \times G)\text{.}
\end{equation}
The two differential forms $H$ and $\rho$ satisfy the identities
\begin{eqnarray}
\label{eq:deltaH}
\Delta H &:=&  H_{g_1}-H_{g_1g_2}  + H_{g_2}=\mathrm{d}\rho
\\
\label{eq:deltarho}
\Delta\rho &:=& \rho_{g_1,g_2}  + \rho_{g_1g_2,g_3}  -  \rho_{g_2,g_3} -  \rho_{g_1,g_2g_3}=0\text{.}
\end{eqnarray}
for all $g_1,g_2,g_3\in G$, where the subscripts are meant so that $\rho_{g_1g_2,g_3}$ is the pullback of $\rho$ along the map $(g_1,g_2,g_3) \mapsto (g_1g_2,g_3)$. Eq.
\erf{eq:deltaH} assures that the Mickelsson product is well-defined on equivalence classes, and \erf{eq:deltarho} implies its associativity.

A fusion product on $\wzwmodel$ is defined as follows. For $(\gamma_1,\gamma_2,\gamma_3) \in PG^{[3]}$, we define
\begin{equation}
\label{eq:explicitfusionproduct}
\begin{array}{rcccl}
\lambda_{\gamma_1,\gamma_2,\gamma_3} &\maps&  \wzwmodel|_{\gamma_1\lop\gamma_2} \otimes \wzwmodel|_{\gamma_2\lop\gamma_3} &\to& \wzwmodel|_{\gamma_1\lop\gamma_3}
\\ &&(\phi_{12},z_{12}) \otimes (\phi_{23},z_{23}) &\mapsto& (\phi_{13},z_{12}  z_{23}\cdot \mathrm{e}^{-2\pi\im S_{\mathrm{WZ}}(\Psi)})\text{,} 
\end{array}
\end{equation}
where $\phi_{13}\maps  D^2 \to G$ is an arbitrarily chosen smooth map with  $\partial\phi_{13}=\gamma_1 \lop \gamma_3$, and $\Psi\maps  S^2 \to G$ is obtained by trisecting $S^2$ along the longitudes $0$, $\frac{2\pi}{3}$ and $\frac{4\pi}{3}$, and  prescribing $\Psi$ on each sector with the maps $\phi_{12}$, $\phi_{23}$ (with orientation-preserving identification) and $\phi_{13}$ (with orientation-reversing identification), respectively, see Figure \ref{fig:fusion}. 
\begin{figure}[h]
\begin{center}
\includegraphics{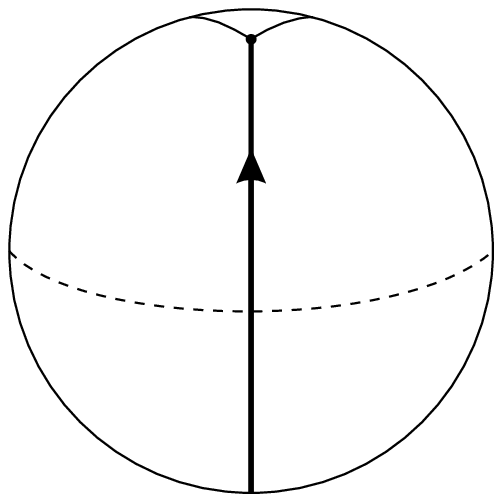}\setlength{\unitlength}{1pt}\begin{picture}(0,0)(202,648)\put(138.40186,740.21499){$\gamma_2$}\put(164.29550,718.90951){$\phi_{23}$}\put(89.17512,719.19893){$\phi_ {12}$}\put(135.75572,772.48205){$h$}\put(136.41194,692.16775){$0$}\end{picture}
\hspace{4em}
\includegraphics{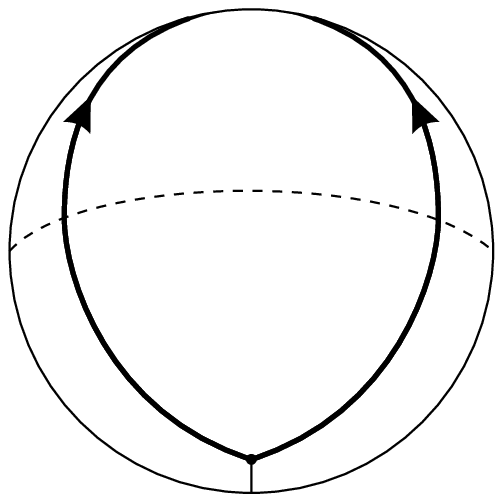}\setlength{\unitlength}{1pt}\begin{picture}(0,0)(408,648)\put(270.36737,753.73767){$\gamma_3$}\put(309.71424,706.42687){\rebox{$\phi_{13}$}}\put(317.72268,652.44614){$g$}\put(345.25701,754.02198){$\gamma_1$}\put(263.90523,717.71839){$\frac{2\pi}{3}$}\put(351.38069,715.94787){$\frac{4\pi}{3}$}\end{picture}
\end{center}
\caption{Front and back view to the map $\Psi$ in the definition of the fusion product over a triple $(\gamma_1,\gamma_2,\gamma_3) \in PG^{[3]}$ with common initial point $g$ and common end point $h$.}
\label{fig:fusion}
\end{figure}
\begin{comment}
This map $\Psi$ is  smooth due to the sitting instants of the paths and the requirement that the maps $\phi_{ij}$ are radially constant.
The open complement in the 2-sphere $S^2$ of the longitudes $0$, $\frac{3\pi}{2}$ and $\frac{4\pi}{2}$ has three connected components $T_1$, $T_2$, $T_3$. Each can be identified with the open disc by means of the following diffeomorphisms $\sigma_k\maps T_k \to D^2_{open}$.   
First we have to map the three angle intervals 
\begin{equation*}
\textstyle
I_1 = (0,\frac{2\pi}{3})
\quomma
I_2 = (\frac{2\pi}{3},\frac{4\pi}{3})
\quand
I_3 = (\frac{4\pi}{3},2\pi)
\end{equation*}
diffeomorphically to $(0,\pi)$, which is done by the maps
\begin{equation*}
s_1(t) \df\pi- \textstyle\frac{3}{2}t
\quomma
s_2(t) \df 2\pi- \frac{3}{2}t}
\quand
s_3(t) \df \frac{3}{2}t- 2\pi
\end{equation*}
of which $s_3$ is orientation-preserving.
Then we define
\begin{equation*}
\sigma_k \maps  T_k \to D^2_{open} \maps  (r\sin\varphi,r\cos\varphi,z) \mapsto (r \cos(s_k(\varphi)),z)\text{.}
\end{equation*}

The smooth maps $\phi_{12} \circ \sigma_1$, $\phi_{23} \circ \sigma_2$   and $\phi_{13} \circ \sigma_3$ define a smooth map $\Psi\maps  S^2 \to G$. 
\end{comment}
That  \erf{eq:explicitfusionproduct} is independent of the choice of $\phi_{13}$ follows from the identity $ S_{\mathrm{WZ}}(\Psi) =  S_{\mathrm{WZ}}(\Psi')S_{\mathrm{WZ}}(\Phi_{13})$ for Wess-Zumino terms, where $\Psi'$ is obtained as described above but using a different map $\phi_{13}'$ instead of $\phi_{13}$, and $\Phi_{13}$ is obtained in the way described earlier from $\phi_{13}$ and $\phi_{13}'$. 
\begin{comment}
Suppose $\phi_{13}'$ is a different smooth map with $\partial\phi_{13}'=\gamma_1\lop\gamma_3$. We denote by $\Psi'$ the map obtained using $\phi_{13}'$ instead of $\phi_{13}$.
We have to show that
\begin{equation*}
(\phi_{13},z_{12}  z_{23}\cdot \mathrm{e}^{2\pi\im S_{\mathrm{WZ}}(\Psi)}) \sim (\phi'_{13},z_{12}  z_{23}\cdot \mathrm{e}^{2\pi\im S_{\mathrm{WZ}}(\Psi')})\text{,}
\end{equation*} 
which is equivalent to showing that
\begin{equation*}
\mathrm{e}^{2\pi\im S_{\mathrm{WZ}}(\Psi')} = \mathrm{e}^{2\pi\im (S_{\mathrm{WZ}}(\Psi) + S_{\mathrm{WZ}}(\Phi_{13}))} \text{.}
\end{equation*} 
This follows from the claimed identity
\begin{equation*}
S_{\mathrm{WZ}}(\Psi) =  S_{\mathrm{WZ}}(\Psi')S_{\mathrm{WZ}}(\Phi_{13})
\end{equation*} 
which follows from the following consideration. Let $\tilde\Psi'\maps  D^3 \to G$ extend $\Psi'$ to the solid ball, and let $\tilde \Phi_{13}$ extend $\Phi_{13}$. Both maps can be glued along the common piece $T_{3}$ of the boundary $S^2$ of $D^3$, where $\tilde\Psi'$ and $\tilde\Phi_{13}$ coincide (namely, they are both $\phi'_{13}$, and the orientations are opposite). The glued map $\tilde\Psi'$ extends $\Psi'$ to the solid ball. 
\end{comment}
Definition \erf{eq:explicitfusionproduct} is also well-defined under the equivalence relation $\sim$ due to a similar identity for Wess-Zumino terms.  
\begin{comment}
Suppose we have equivalences $(\phi_{12},z_{12}) \sim (\phi_{12}',z_{12}')$ and $(\phi_{23},z_{23}) \sim (\phi_{23}',z_{23}')$. We may choose the same map $\phi_{13}$  for both computations, and denote now by $\Psi'$ the map obtained from $\phi_{12}'$, $\phi_{23}'$ and $\phi_{13}$. We
have to show that
\begin{equation*}
(\phi_{13},z_{12}  z_{23}\cdot \mathrm{e}^{2\pi\im S_{\mathrm{WZ}}(\Psi)}) \sim (\phi_{13},z_{12}  'z_{23}'\cdot \mathrm{e}^{2\pi\im S_{\mathrm{WZ}}(\Psi')})\text{,}
\end{equation*}
i.e. that
\begin{equation*}
z_{12}  z_{23}\cdot \mathrm{e}^{2\pi\im S_{\mathrm{WZ}}(\Psi)} = z'_{12}  z'_{23}\cdot \mathrm{e}^{2\pi\im (S_{\mathrm{WZ}}(\Psi') + S_{\mathrm{WZ}}(\Phi_{13}))\text{,}}
\end{equation*}
where $\Phi_{13}$ is obtained from $\phi_{13}$ and itself -- in fact one can show that $\exp(2\pi\im S_{\mathrm{WZ}}(\Phi_{13}))=1$. So it boils down to the identity
\begin{equation*}
S_{\mathrm{WZ}}(\Phi_{12})S_{\mathrm{WZ}}(\Phi_{23})S_{\mathrm{WZ}}(\Psi) = S_{\mathrm{WZ}}(\Psi')\text{.}
\end{equation*}
In order to prove this identity, we proceed similarly as above. Let $\tilde\Psi\maps  D^3 \to G$ be an extension of $\Psi$ to the solid ball, and let $\tilde\Phi_{12}$, $\tilde\Phi_{23}$ be extensions of $\Phi_{12}$, $\Phi_{23}$, respectively. The orientations fit to glue those to a map $\tilde\Psi'$ that extends $\Psi'$.
\end{comment}
Associativity follows from reparameterization invariance of the integral, and multiplicativity follows from the Polyakov-Wiegmann formula 
\begin{equation}
\label{eq:polwieg}
\mathrm{e}^{2\pi \im S_{\mathrm{WZ}}(\Phi_1)}\cdot \mathrm{e}^{2\pi \im S_{\mathrm{WZ}}(\Phi_2)} = \mathrm{e}^{2\pi\im S_{\mathrm{WZ}}(\Phi_1\Phi_2)}\cdot \exp 2\pi\im \left ( \int_{(\Phi_1,\Phi_2)} \rho \right )\text{,}
\end{equation}
which in turn follows from \erf{eq:deltarho}.

A thin homotopy equivariant structure $d$ is  defined as follows. Suppose $(\gamma_0,\gamma_1)\in \thinpairs{LG}$. Then,
\begin{equation*}
d_{\gamma_0,\gamma_1}:\wzwmodel|_{\gamma_0} \to \wzwmodel|_{\gamma_1}:(\phi_0,z_0) \mapsto (\phi_1,z_0\cdot \mathrm{e}^{2\pi\im S_{\mathrm{WZ}}(\Phi_{\gamma})})\text{,}
\end{equation*}
where $\phi_1: D^2 \to G$ is an arbitrarily chosen smooth map with $\partial\phi_1=\gamma_1$, and $\Phi_{\gamma}:S^2 \to G$ is the following map. On the polar caps $D^2\subset S^2$ we prescribe $\Phi$ with $\phi_0$ (around the north pole with orientation-reversing identification) and  $\phi_1$ (around the south pole with orientation-preserving identification), and on the remaining cylinder $Z \cong [0,1]\times S^1$ by the homotopy $h_\gamma$ of a thin path $\gamma:[0,1] \to LG$ with $\gamma_0=\gamma(0)$ and $\gamma_1=\gamma(1)$.
A different choice of $\phi_1$ gives an equivalent result. If another thin path $\gamma'$ is chosen, then the two paths constitute a rank one loop in $LG$, i.e. a map $\Phi:S^1 \times S^1 \to G$ of rank one, and we have to prove that $S_{\mathrm{WZ}}(\Phi)=0$. This follows as a special case of \cite[Prop. 3.3.1]{waldorf10}. In the next paragraphs we give an independent argument for the vanishing of Wess-Zumino terms for maps $\Phi:\Sigma \to G$ of rank one, mapping a compact oriented surface $\Sigma$ to a connected, simple, simply-connected compact Lie group $G$.

Let $T$ be a maximal torus of $G$, $\mathfrak{t}$ its Lie algebra, and  $\mathfrak{A} \subset \mathfrak{t}$ a closed Weyl alcove, a simplex with vertices $0=\mu_0,...,\mu_r$, where $r$ is the rank of $G$. We let $F_0 \subset \mathfrak{A}$ denote the closed face spanned by $\{\mu_1,...,\mu_r\}$,  $\mathfrak{A}_0 := \mathfrak{A} \setminus F_0$ its complement, and 
\begin{equation*}
U := \{ h\exp\xi h^{-1} \sep  h\in G\text{ and }\xi\in \mathfrak{A}_0 \}
\end{equation*}
the corresponding open subset of $G$, which deformation retracts to $1\in G$, see \cite[Lemma 5.1]{meinrenken1}.  We show below that there exists $g\in G$ such that the left-translated map $\Phi_g: \Sigma \to G$, $\Phi_g(x):=g\Phi(x)$, has its image contained in $U$. Due to the left-invariance of $H$, we have $S_{\mathrm{WZ}}(\Phi)=S_{\mathrm{WZ}}(\Phi_g)$. By composition with the deformation retract one obtains a rank-two map $\tilde \Phi_g: D \to U$ defined on a 3-dimensional manifold $D$ with $\partial D=\Sigma$, such that $\tilde\Phi_g|_{\Sigma}=\Phi_g$. Thus, \begin{equation*}
S_{\mathrm{WZ}}(\Phi)= S_{\mathrm{WZ}}(\Phi_g) = \int_{D} \tilde\Phi_g^{*}H = 0\text{.}
\end{equation*}
Next we prove the existence of an appropriate left translating element $g$.

Recall that $g\in G$ is called \emph{regular} if the conjugacy class of $g$ has maximal dimension. For the class of Lie groups under consideration, an element is regular if and only if it is conjugate to $\exp\xi$ for $\xi$ in the interior of $\mathfrak{A}$. In particular, the open subset $G^{\mathrm{reg}}$ of regular elements is contained in $U$. We define $Q := G \setminus U$, and obtain $Q \subset G\setminus G^{\mathrm{reg}}$. The set $G\setminus G^{\mathrm{reg}}$ is the disjoint union of finitely many submanifolds of $G$ of codimension $\geq 2$ \cite[Page 137 Item (g)]{Duistermaat2000}. Let $Q_{\alpha}$ be one of these submanifolds. We consider the family $g \mapsto \Phi_g$ of left translates, which, as a map $G \times \Sigma \to G$, is a submersion and hence transverse to $Q_{\alpha}$. By the parametric transversality theorem \cite[Chp. 3, Thm. 2.7]{hirsch1} the set $G_{\alpha}:=\{g\in G \sep \Phi_g \text{ is transverse to }Q_{\alpha}\}$ is residual.  But for  dimensional reasons, a rank one map can only be transverse to a submanifold of codimension $\geq 2$ if its image does not intersect $Q_{\alpha}$, i.e. $G_{\alpha}=\{g\in G \sep \Phi_g(\Sigma)\cap Q_{\alpha}=\emptyset \}$.
The intersection of finitely many residual sets is residual, in particular non-empty. Thus, there exists $g\in G$ such that $\Phi_g(\Sigma)$ does not intersect any of the submanifolds $Q_{\alpha}$, i.e. $\Phi_g(\Sigma) \subset U$.
This finishes the proof that  $S_{\mathrm{WZ}}(\Phi)=0$.

By now we have shown that the thin homotopy equivariant structure $d$ on $\mathcal{L}_G$ is well-defined as a bundle isomorphism over $\thinpairs{LG}$. The cocycle condition follows from the identity $S_{\mathrm{WZ}}(\Phi_{\gamma_2})+S_{\mathrm{WZ}}(\Phi_{\gamma_1})=S_{\mathrm{WZ}}(\Phi_{\gamma_2\pcomp\gamma_1})$, if at $\gamma_1(1)=\gamma_2(0)$ the same extension $\phi:D^2 \to G$ is chosen. Multiplicativity follows because in the Polyakov-Wiegmann formula \erf{eq:polwieg} the error term vanishes, as the integral of the 2-form $\rho$ along a thin path $(\gamma,\tau)$ through $L(G \times G)$ gives zero. Compatibility with the fusion product $\lambda$ can be seen by inspection of the occurring integrals. Finally, let us check in some more detail that $d$ symmetrizes $\lambda$. For $(\gamma_1,\gamma_2,\gamma_3)\in PG^{[3]}$, let $\phi_{12},\phi_{23},\phi_{13}:D^2 \to G$ such that $\lambda((\phi_{12},1)\otimes (\phi_{23},1))=(\phi_{13},1)$ holds. This means that $\mathrm{e}^{S_{\mathrm{WZ}}(\Psi)}=1$, with $\Psi:S^2 \to G$ defined as shown in Figure \ref{fig:fusion}. We have to check that
\begin{equation}
\label{eq:checksymm}
\lambda((\phi_{23},1)\cdot r_{\pi} \otimes (\phi_{12},1)\cdot r_{\pi}) = (\phi_{13},1)\cdot r_{\pi}\text{,} 
\end{equation}
where $r_{\pi}\in \diff^{+}(S^1)$ is the rotation by an angle of $\pi$. The definition of the thin homotopy equivariant structure implies $(\phi,z) \cdot r_{\pi}=(\phi \circ r_{\pi},z)$, where on the right hand side $r_{\pi}$ is extended to a rotation of $D^2$. In order to check \erf{eq:checksymm} we have to form the map $\Psi':S^2 \to G$ using  $\phi_{23} \circ r_{\pi}$, $\phi_{12} \circ r_{\pi}$ and $\phi_{13} \circ r_{\pi}$. By inspection, $\Psi'=\Psi \circ r_{\pi}$, where now $r_{\pi}$ is extended to a rotation of $S^2$ about the front-back axis, compare Figures \ref{fig:fusion} and \ref{fig:fusionrot}.
\begin{figure}[h]
\begin{center}
\includegraphics{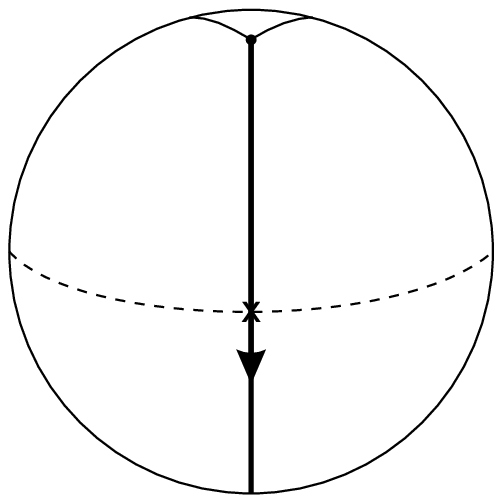}\setlength{\unitlength}{1pt}\begin{picture}(0,0)(202,423)\put(137.90183,455.64378){$\gamma_2$}\put(82.78044,503.95975){$r_\pi(\phi_{23})$}\put(154.17496,504.37814){$r_\pi(\phi_ {12})$}\put(135.75572,546.76800){$g$}\put(136.41194,478.84706){$0$}\end{picture}
\hspace{4em}
\includegraphics{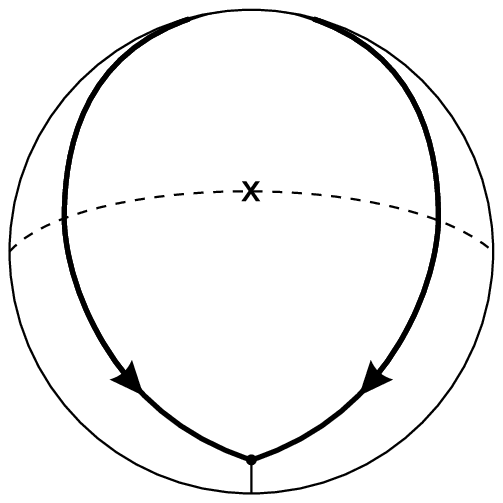}\setlength{\unitlength}{1pt}\begin{picture}(0,0)(408,423)\put(284.11739,458.16661){$\gamma_1$}\put(300.96425,527.60608){\rebox{$r_\pi(\phi_{13})$}}\put(317.72268,426.88460){$h$}\put(330.00718,457.95090){$\gamma_3$}\put(263.90523,492.64753){$\frac{2\pi}{3}$}\put(351.38069,490.87701){$\frac{4\pi}{3}$}\end{picture}
\end{center}
\caption{Front and back view to the map $\Psi'$ defined as in Figure \ref{fig:fusion} but using the maps $\phi_{23} \circ r_{\pi}$, $\phi_{12} \circ r_{\pi}$ and $\phi_{13} \circ r_{\pi}$ instead of $\phi_{12}$, $\phi_{23}$ and $\phi_{13}$. The pictures are precisely the ones of Figure \ref{fig:fusion} rotated by an angle of $\pi$ around the axis that intersects $S^2$ at the two marked points.}
\label{fig:fusionrot}
\end{figure}
Thus, $\mathrm{e}^{S_{\mathrm{WZ}}(\Psi)}=1$ and \erf{eq:checksymm} holds. 
\end{example}

\setsecnumdepth{2}

\section{Features of fusion and thin homotopy equivariance}

In this section we derive some consequences of the presence of a fusion product and a thin homotopy equivariant structure on a central extension. In particular, all results of this section hold for thin fusion extensions, and thus, by our main theorem, for transgressive central extensions.

\label{sec:features}

\subsection{Flat loops and  retraction}

\label{sec:flatloops}

Loops in the image of the map $\flat:PG \to LG:\gamma \mapsto \gamma \lop \gamma$ are called \emph{flat loops}.  Note that $\flat$ is a group homomorphism and  that every constant loop is flat. Suppose $\phi$ is a smoothing map, i.e. it is a smooth map $\phi:[0,1] \to [0,1]$ with $\phi(0)=0$ and $\phi(1)=1$,  locally constant in a neighborhood of $\{0,1\}$, and smoothly homotopic to $\id_{[0,1]}$. \emph{Path retraction} is the map
\begin{equation*}
[0,1] \times PG \to PG: (t,\gamma)\mapsto\phi_\gamma(t)
\end{equation*}
defined by $\phi_{\gamma}(t)(s) := \gamma(t\phi(s))$;  $\phi$ is necessary to guarantee that $\phi_{\gamma}(t)$ has sitting instants. Clearly, $t \mapsto \flat (\phi_{\gamma}(t))$ is a thin path in $LG$; in particular, for every $t\in [0,1]$, $\flat (\phi_{\gamma}(t))$ is thin homotopic to $\flat(\gamma \circ \phi)$, which is in turn thin homotopic to $\flat(\gamma)$.

\begin{proposition}
\label{prop:can}
Suppose  $\mathcal{L}$ is a central extension of $LG$ equipped with a multiplicative fusion product $\lambda$ and a fusive thin homotopy equivariant structure $d$. Then, there exists a unique section $PG \to \mathcal{L}:\gamma \mapsto \can_{\gamma}$  along $\flat$ such that $\lambda(\can_{\gamma},\can_{\gamma})=\can_{\gamma}$.
It has the following properties:
\begin{enumerate}[(i)]

\item
\label{prop:can:neutral}
It is neutral with respect to fusion, i.e. $\lambda(p\otimes \can_{\gamma_2})=p=\lambda(\can_{\gamma_1} \otimes p)$ for all $p\in \mathcal{L}_{\gamma_1 \lop \gamma_2}$.

\item 
\label{prop:can:hom}
It is a group homomorphism, i.e. $\can_{\gamma_1}\cdot \can_{\gamma_2} = \can_{\gamma_1\gamma_2}$.

\item
\label{prop:can:inv}
It is retraction-invariant, i.e. $\can_{\phi_\gamma(t)}=d_{\flat(\gamma),\flat(\phi_\gamma(t))}(\can_{\gamma})$ for all $t\in[0,1]$.

\end{enumerate}
\end{proposition}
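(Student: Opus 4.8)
The plan is to construct $\can_\gamma$ fibrewise and then verify uniqueness and the three properties from the defining equation $\lambda(\can_\gamma \otimes \can_\gamma) = \can_\gamma$. First I would fix $\gamma \in PG$ and look at the fibre $\mathcal{L}_{\gamma\lop\gamma}$. Since $\mathcal{L}$ is a $\ueins$-bundle, this fibre is a $\ueins$-torsor; the triple $(\gamma,\gamma,\gamma) \in PG^{[3]}$ gives a fusion multiplication $\lambda\maps \mathcal{L}_{\gamma\lop\gamma} \otimes \mathcal{L}_{\gamma\lop\gamma} \to \mathcal{L}_{\gamma\lop\gamma}$. Picking any $p \in \mathcal{L}_{\gamma\lop\gamma}$, we have $\lambda(p \otimes p) = c_\gamma(p)\cdot p$ for a unique $c_\gamma(p) \in \ueins$, and replacing $p$ by $z p$ changes this by the associativity/bilinearity of $\lambda$: one computes $\lambda(zp \otimes zp) = z^2 \lambda(p\otimes p) = z^2 c_\gamma(p) \cdot p = z c_\gamma(p)\cdot(zp)$, so the condition $\lambda(\can_\gamma \otimes \can_\gamma) = \can_\gamma$ determines $\can_\gamma$ uniquely as $c_\gamma(p)^{-1}\cdot p$. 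This gives existence and uniqueness of the section pointwise; smoothness follows because $\lambda$ is a smooth bundle morphism and the solution is obtained by dividing by the smooth function $c_\gamma(p)$ for any local smooth choice of $p$.

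Next, property \erf{prop:can:neutral}: given $p \in \mathcal{L}_{\gamma_1\lop\gamma_2}$, consider the quadruple $(\gamma_1,\gamma_1,\gamma_2,\gamma_2)$ — actually the relevant triples are $(\gamma_1,\gamma_1,\gamma_2)$ and $(\gamma_1,\gamma_2,\gamma_2)$. The element $q := \lambda(\can_{\gamma_1}\otimes p)$ lies in $\mathcal{L}_{\gamma_1\lop\gamma_2}$, and using the associativity of $\lambda$ together with $\lambda(\can_{\gamma_1}\otimes\can_{\gamma_1}) = \can_{\gamma_1}$ one gets $\lambda(\can_{\gamma_1}\otimes q) = \lambda(\lambda(\can_{\gamma_1}\otimes\can_{\gamma_1})\otimes p) = \lambda(\can_{\gamma_1}\otimes p) = q$; since $\lambda(\can_{\gamma_1}\otimes -)$ is multiplication by a fixed element of $\ueins$ on the torsor $\mathcal{L}_{\gamma_1\lop\gamma_2}$, the only fixed point forces that element to be $1$, i.e. $\lambda(\can_{\gamma_1}\otimes p) = p$. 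The right-neutrality $\lambda(p\otimes\can_{\gamma_2}) = p$ is symmetric. For property \erf{prop:can:hom}, I would use multiplicativity of $\lambda$: $\can_{\gamma_1}\can_{\gamma_2}$ lies in $\mathcal{L}_{(\gamma_1\gamma_2)\lop(\gamma_1\gamma_2)}$, and $\lambda(\can_{\gamma_1}\can_{\gamma_2}\otimes\can_{\gamma_1}\can_{\gamma_2}) = \lambda(\can_{\gamma_1}\otimes\can_{\gamma_1})\cdot\lambda(\can_{\gamma_2}\otimes\can_{\gamma_2}) = \can_{\gamma_1}\can_{\gamma_2}$, so by the uniqueness just established $\can_{\gamma_1}\can_{\gamma_2} = \can_{\gamma_1\gamma_2}$.

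Finally, property \erf{prop:can:inv}: set $\gamma_t := \phi_\gamma(t)$, so $t\mapsto\flat(\gamma_t)$ is a thin path in $LG$ (as recalled just before the proposition), hence $d_{\flat(\gamma),\flat(\gamma_t)}$ is defined. I would show that $p_t := d_{\flat(\gamma),\flat(\gamma_t)}(\can_\gamma)$ also satisfies the defining fixed-point equation, $\lambda(p_t\otimes p_t) = p_t$. This is exactly where the compatibility of $d$ with $\lambda$ (Definition \ref{def:comp}\erf{def:comp:a}) enters: applying it to the constant-in-the-first-two-slots path $(\gamma,\gamma,\gamma_s)_{s}$ — more precisely the path in $P(PG^{[3]})$ given by $s\mapsto(\gamma,\gamma,\gamma_{ts})$, whose three associated loops $\flat(\gamma)$, $\flat(\gamma)$, and $s\mapsto\gamma\lop\gamma_{ts}$ need to be checked thin (the first two are constant paths, hence thin; the third is thin because $\gamma\lop\gamma_{ts}$ is thin homotopic to the flat loop $\flat(\gamma_{ts})$ along a thin path, using path retraction again) — the compatibility square reads $d_{\flat(\gamma),\flat(\gamma_t)}(\lambda(\can_\gamma\otimes\can_\gamma)) = \lambda(d_{\flat(\gamma),\flat(\gamma_t)}(\can_\gamma)\otimes d_{\flat(\gamma),\flat(\gamma_t)}(\can_\gamma))$, i.e. $p_t = \lambda(p_t\otimes p_t)$. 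Uniqueness then gives $\can_{\gamma_t} = p_t$, which is the claim. The main obstacle I anticipate is bookkeeping in this last step: making sure the path in $P(PG^{[3]})$ one feeds into the compatibility axiom has all three of its $\flat$-images thin, which requires combining the retraction homotopy with the observation (already in the text) that $\gamma\lop\gamma'$ is thin-homotopic to $\flat(\gamma)$ when $\gamma,\gamma'$ share endpoints and one interpolates suitably — and more fundamentally, one may need to argue that $\gamma\lop\gamma_{ts}$ is not merely thin-homotopic to something flat but that the interpolating path itself is thin, which is what the compatibility axiom literally demands. Everything else is torsor arithmetic and uniqueness; the symmetrizing half of "fusive" is not needed here, only associativity, multiplicativity, and compatibility.
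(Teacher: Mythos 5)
Your construction and uniqueness of the section, and your arguments for \erf{prop:can:neutral} and \erf{prop:can:hom}, are correct and are essentially the paper's own proof (the paper phrases uniqueness via two global sections differing by $\alpha\maps PG\to\ueins$ with $\alpha^2=\alpha$, and gets existence because the diagonal pullback of $\lambda$ trivializes $\flat^{*}\mathcal{L}$; your fibrewise torsor computation plus local sections carries the same content). The genuine problem is in \erf{prop:can:inv}: the path you feed into the compatibility axiom is not admissible. The triple $(\gamma,\gamma,\phi_\gamma(ts))$ does not lie in $PG^{[3]}$, since $\gamma$ ends at $\gamma(1)$ while $\phi_\gamma(ts)$ ends at $\gamma(ts\,\phi(1))=\gamma(ts)$; in particular $\gamma\lop\phi_\gamma(ts)$ is not even defined. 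Moreover, even for a well-formed path that is constant in the first two slots, the square of Definition \ref{def:comp} \erf{def:comp:a} acts slotwise according to the three loop-paths, so it would read $d_{13}(\lambda(p_{12}\otimes p_{23}))=\lambda(p_{12}\otimes d_{23}(p_{23}))$ with $p_{12}$ sitting over the unchanged loop $\gamma_1(0)\lop\gamma_2(0)$; at the endpoint the relevant $\lambda$ is the one over $(\gamma,\gamma,\phi_\gamma(t))$, whose inputs do not lie in $\mathcal{L}_{\flat(\phi_\gamma(t))}$, so this can never produce the diagonal identity $\lambda(p_t\otimes p_t)=p_t$ that you need.

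The identity you display is nevertheless the right one, and it follows from compatibility applied to the \emph{diagonal} path $s\mapsto(\delta(s),\delta(s),\delta(s))\in PG^{[3]}$, where $\delta$ is a path in $PG$ from $\gamma$ to $\phi_\gamma(t)$ such that $s\mapsto\flat(\delta(s))$ is thin: then all three associated loop-paths coincide with this single thin path, and the square is exactly $d_{\flat(\gamma),\flat(\phi_\gamma(t))}(\lambda(\can_\gamma\otimes\can_\gamma))=\lambda(d_{\flat(\gamma),\flat(\phi_\gamma(t))}(\can_\gamma)\otimes d_{\flat(\gamma),\flat(\phi_\gamma(t))}(\can_\gamma))$, after which your uniqueness argument finishes the proof. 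Such a $\delta$ exists by the discussion at the beginning of Section \ref{sec:flatloops}: concatenate the reparameterization path $u\mapsto\gamma\circ\bigl((1-u)\,\id_{[0,1]}+u\,\phi\bigr)$ from $\gamma$ to $\gamma\circ\phi=\phi_\gamma(1)$ (each stage has sitting instants because $\gamma$ does, and the corresponding homotopy of flat loops factors through $\gamma$, hence is thin) with the retraction segment $u\mapsto\phi_\gamma(1-u(1-t))$, whose flat loops form a thin path as recalled there. Note finally that since $d$ is defined over $\thinpairs{LG}$, the maps in the square depend only on the endpoint loops, so they are literally $d_{\flat(\gamma),\flat(\phi_\gamma(t))}$, which is what \erf{prop:can:inv} asserts; this diagonal-path argument is exactly what the paper's one-line computation for (iii) implicitly uses.
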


\begin{proof}
Two sections $s,s':PG \to \mathcal{L}$ differ by a smooth map $\alpha:PG \to \ueins$, i.e., $s'=\alpha s$. If the sections satisfy  $\lambda(s,s)=s$ and $\lambda(s',s')=s'$, we get $\alpha^2=\alpha$ and so $s=s'$. For the existence, we notice that pulling back $\lambda$ along the diagonal map $PG \to PG^{[3]}$ shows that $\flat^{*}\mathcal{L}$ is trivializable. Let $s: PG \to \mathcal{L}$ be any section. Then, there exists a unique smooth map $\alpha:PG \to \ueins$ such that $\lambda(s \otimes s)=s \cdot \alpha$.  Hence, $\can_{\gamma} := s(\gamma) \cdot \alpha(\gamma)$ has the desired property.

For (i) we have $\lambda(\can_{\gamma_1} \otimes p) = \lambda(\lambda(\can_{\gamma_1} \otimes \can_{\gamma_1}) \otimes p) = \lambda(\can_{\gamma_1} \otimes \lambda (\can_{\gamma_1} \otimes p))$ using the associativity of the fusion product, and since $\lambda(\can_{\gamma_1}\otimes - )$ is an isomorphism, we get $p=\lambda (\can_{\gamma_1} \otimes p)$. Analogously we show neutrality from the right. For (ii)
we have 
\begin{equation*}
\lambda((\can_{\gamma_1}\cdot \can_{\gamma_2}) \otimes (\can_{\gamma_1} \cdot \can_{\gamma_2})) = \lambda(\can_{\gamma_1} \otimes \can_{\gamma_1})\cdot \lambda(\can_{\gamma_2} \otimes \can_{\gamma_2})
= \can_{\gamma_1} \cdot \can_{\gamma_2}
\end{equation*}
 using the multiplicativity of the fusion product; the uniqueness of the section then shows that $\can_{\gamma_1}\cdot \can_{\gamma_2}=\can_{\gamma_1\gamma_2}$. For (iii) we compute with the compatibility of Definition \ref{def:comp} \erf{def:comp:a}
\begin{equation*}
\lambda(d_{\flat(\gamma),\flat(\phi_\gamma(t))}(\can_{\gamma}) \otimes d_{\flat(\gamma),\flat(\phi_\gamma(t))}(\can_{\gamma})) = d_{\flat(\gamma),\flat(\phi_{\gamma}(t))}(\lambda(\can_{\gamma} \otimes \can_{\gamma}))= d_{\flat(\gamma),\flat(\phi_{\gamma}(t))}(\can_{\gamma} )\text{,} 
\end{equation*}
from which the claim follows again from the uniqueness of the section. 
\end{proof}

In particular, the restriction of $\mathcal{L}$ to flat loops is canonically trivializable as a central extension of $PG$ by $\ueins$.

\subsection{Loop concatenation}

\label{sec:loopconcat}

Let $\mathcal{L}$ be a central extension of $LG$ equipped with a  fusion product.
We start with the prototypical situation for loop concatenation: let $\gamma_1,\gamma_2\in PG$ be closed, i.e. loops with sitting instants. We have $(\gamma_1,\id,\gamma_2)\in PG^{[3]}$. The concatenated loop is $\gamma_1\lop\gamma_2$. If $p_1\in \mathcal{L}_{\gamma_1\lop\id}$ and $p_2\in \mathcal{L}_{\id \lop \gamma_2}$, then
\begin{equation*}
\lambda(p_1 \otimes p_2) \in \mathcal{L}_{\gamma_1\lop\gamma_2}\text{;}
\end{equation*}
this lifts loop concatenation from $LG$ to $\mathcal{L}$.
In the following we use a  fusive thin homotopy equivariant structure in order to generalize to arbitrary loops that admit concatenation (not necessarily with sitting instants).

We denote by $LG \times_G^{\infty} LG$ the subset of $LG \times LG$ consisting of pairs $(\tau_1,\tau_2)$ such that
 $\tau_1(1)=\tau_2(1)$ and the concatenation $\prev{\tau_2} \pcomp \tau_1$   is again a smooth loop. Thus, we have a well-defined map
\begin{equation*}
con:LG \times_G^{\infty} LG \to LG: (\tau_1,\tau_2)\mapsto \prev{\tau_2} \pcomp \tau_1 \text{.}
\end{equation*}
If we equip $LG \times_G^{\infty} LG \subset LG \times LG$ with the subspace diffeology (i.e. a map $c:U \to LG \times_G^{\infty} LG$ is a plot if and only if its extension to $LG \times LG$ is smooth), then $con$ is smooth.
\begin{comment}
Indeed, if $c:U \to LG \times_G^{\infty} LG$ is a plot, we let $c_i := \pr_i \circ c$ and have that $U \times S^1\to G :(u,z) \mapsto c_i(u)(z)$ are smooth maps. Then, $con \circ c:U \to LG$ is a plot.
\end{comment}
Further, $con$ is a group homomorphism.

We fix a smoothing map $\phi$ and construct new smooth maps  $\phi_1,\phi_2:[0,1] \to [0,1]$ by setting: 
\begin{equation*}
\phi_1(t) := \begin{cases}
0 & 0\leq t \leq \frac{1}{2} \\
\phi(2t-1) & \frac{1}{2}\leq t \leq 1 \\
\end{cases}
\quand
\phi_2(t) := \begin{cases}
\phi(2t) & 0\leq t \leq \frac{1}{2} \\
1 & \frac{1}{2}\leq t \leq 1\text{.} \\
\end{cases}
\end{equation*}
These cover well-defined smooth maps $\phi_1,\phi_2:S^1 \to S^1$ that are smoothly homotopic to $\id_{S^1}$. 
Thus, if $(\tau_1,\tau_2)\in LG \times_G^{\infty} LG$ with $g:=\tau_1(1)=\tau_2(1)$, then $\tilde\tau_k := \tau_k \circ \phi_k$ is thin homotopic to $\tau_k$ for $k=1,2$; furthermore $\tilde\tau_1 = (\tau_1 \circ \phi) \lop \id_g$ and $\tilde\tau_2 = \id_g \lop (\tau_2 \circ \phi)$.

Suppose we have elements $p_k\in \mathcal{L}_{\tau_k}$ for $k=1,2$.
Using the thin homotopy equivariant structure, we define $\tilde p_k := d_{\tau_k,\tilde \tau_k}(p_k)$, and form the fusion product
\begin{equation*}
\tilde p:= \lambda(\tilde p_1 \otimes \tilde p_2) \in \mathcal{L}_{(\tau_1 \circ \phi) \lop (\tau_2 \circ \phi)}\text{.}
\end{equation*}
The loop $(\tau_1 \circ \phi) \lop (\tau_2 \circ \phi)$ is thin homotopic to $con(\tau_1,\tau_2)$. We obtain an element
\begin{equation*}
p := d_{(\tau_1 \circ \phi) \lop (\tau_2 \circ \phi),con(\tau_1,\tau_2)}(\tilde p)\in \mathcal{L}_{con(\tau_1,\tau_2)}\text{.}
\end{equation*}

\begin{proposition}
Suppose  $\mathcal{L}$ is a central extension of $LG$ equipped with a  fusion product $\lambda$ and a fusive thin homotopy equivariant structure $d$. Then, the assignment $(p_1,p_2) \mapsto p$ defined above is a smooth map
\begin{equation*}
\widetilde{con}:\pr_1^{*}\mathcal{L} \otimes \pr_2^{*}\mathcal{L} \to con^{*}\mathcal{L}
\end{equation*}
over $LG \times_G^{\infty} LG$ that is independent of the choice of the smoothing function. If $\lambda$ and $d$  are multiplicative, then  
\begin{equation*}
\widetilde{con}(p_1 \otimes p_2) \cdot \widetilde{con}(p_1' \otimes p_2') = \widetilde{con}(p_1p_1'\otimes p_2p_2')
\end{equation*}
for all $p_i \in \mathcal{L}_{\tau_i}$, $p_i'\in \mathcal{L}_{\tau_i'}$ with $i=1,2$ and $(\tau_1,\tau_2),(\tau_1',\tau_2')\in LG \times_G^{\infty} LG$. 
\end{proposition}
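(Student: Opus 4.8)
The plan is to check three things in turn --- independence of the chosen smoothing function, smoothness, and the multiplicativity identity --- and the one device I would lean on throughout is that every auxiliary homotopy occurring below is of the form ``compose the loop(s) in question with a homotopy of reparametrizing maps of $[0,1]$ or $S^{1}$''. Such a homotopy, regarded as a map $[0,1]\times S^{1}\to G$ (or $\to G\times G$), factors through a one-dimensional map, hence is thin; this is what makes it legitimate to feed it into $d$, and, since the reparametrizing families do not depend on the input loops, it also yields smoothness. For independence I would take two smoothing functions $\phi,\phi'$, note that their convex combination $\phi^{s}:=(1-s)\phi+s\phi'$ is again a smoothing function for every $s$, and deduce, for $k=1,2$, a thin path in $LG$ from $\tau_{k}\circ\phi_{k}$ to $\tau_{k}\circ\phi_{k}'$ (the auxiliary functions built from $\phi$ and $\phi'$), and a thin path from $M_{\phi}:=(\tau_{1}\circ\phi)\lop(\tau_{2}\circ\phi)$ to $M_{\phi'}$. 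With $c:=con(\tau_{1},\tau_{2})$, the cocycle condition gives $d_{M_{\phi},c}=d_{M_{\phi'},c}\circ d_{M_{\phi},M_{\phi'}}$, and applying the compatibility of $d$ with $\lambda$ (Definition \ref{def:comp}\,\erf{def:comp:a}) to the path $s\mapsto(\tau_{1}\circ\phi^{s},\id_{g},\tau_{2}\circ\phi^{s})$ in $PG^{[3]}$ --- whose three loop-paths are thin by the device above --- followed by the cocycle condition once more, I would obtain
\begin{equation*}
d_{M_{\phi},M_{\phi'}}\bigl(\lambda(d_{\tau_{1},\tau_{1}\circ\phi_{1}}(p_{1})\otimes d_{\tau_{2},\tau_{2}\circ\phi_{2}}(p_{2}))\bigr)=\lambda(d_{\tau_{1},\tau_{1}\circ\phi_{1}'}(p_{1})\otimes d_{\tau_{2},\tau_{2}\circ\phi_{2}'}(p_{2}))\text{;}
\end{equation*}
composing with $d_{M_{\phi'},c}$ then shows the two constructions of $p$ coincide.

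For smoothness I would note that, with a fixed smoothing function, $\widetilde{con}$ is a composite of $d$, $\lambda$ and $d$, which are smooth over $\thinpairs{LG}$, $PG^{[3]}$ and $\thinpairs{LG}$ respectively; so it is enough that the assignment sending $(\tau_{1},\tau_{2})$ to the relevant points, together with smoothly varying choices of the needed thin homotopies, is a plot. Since $\phi_{1},\phi_{2}$ and chosen homotopies between them and $\id_{S^{1}}$ are fixed once and for all, the map $(\tau_{1},\tau_{2})\mapsto(\tau_{k},\tau_{k}\circ\phi_{k})$ --- equipped with the thin homotopy obtained by composing $\tau_{k}$ with the chosen homotopy between $\phi_{k}$ and $\id_{S^{1}}$ --- the map $(\tau_{1},\tau_{2})\mapsto(\tau_{1}\circ\phi,\id_{\tau_{1}(1)},\tau_{2}\circ\phi)\in PG^{[3]}$, and the thin homotopy between $(\tau_{1}\circ\phi)\lop(\tau_{2}\circ\phi)$ and $con(\tau_{1},\tau_{2})$ all depend smoothly on $(\tau_{1},\tau_{2})$ --- this uses only smoothness of composition in the relevant mapping spaces and smoothness of $con$ --- which is precisely the plot condition for $\thinpairs{LG}$ recalled in \cite[Section 3.1]{waldorf11}.

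For multiplicativity, assuming $\lambda$ and $d$ multiplicative, I would first record the bookkeeping identities (writing $\tilde\tau_{i}:=\tau_{i}\circ\phi_{i}$, $\tilde\tau_{i}':=\tau_{i}'\circ\phi_{i}$): $(\tau_{1}\tau_{1}',\tau_{2}\tau_{2}')\in LG\times_{G}^{\infty}LG$ because $con$ is a group homomorphism; $(\tau_{i}\tau_{i}')\circ\phi_{i}=\tilde\tau_{i}\tilde\tau_{i}'$; $((\tau_{1}\tau_{1}')\circ\phi)\lop((\tau_{2}\tau_{2}')\circ\phi)$ equals the product of $(\tau_{1}\circ\phi)\lop(\tau_{2}\circ\phi)$ with $(\tau_{1}'\circ\phi)\lop(\tau_{2}'\circ\phi)$ because loop fusion is multiplicative; and $con(\tau_{1}\tau_{1}',\tau_{2}\tau_{2}')=con(\tau_{1},\tau_{2})\cdot con(\tau_{1}',\tau_{2}')$. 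Then I would expand $\widetilde{con}(p_{1}p_{1}'\otimes p_{2}p_{2}')$ in three moves: split each inner transport $d_{\tau_{i}\tau_{i}',\tilde\tau_{i}\tilde\tau_{i}'}(p_{i}p_{i}')$ into $d_{\tau_{i},\tilde\tau_{i}}(p_{i})\cdot d_{\tau_{i}',\tilde\tau_{i}'}(p_{i}')$ by multiplicativity of $d$ --- legitimate because the pair-path in $L(G\times G)$ obtained by composing $(\tau_{i},\tau_{i}')$ with a homotopy of $\phi_{i}$ is thin by the device above; split the fusion product of the two products into the product of the two fusion products by multiplicativity of $\lambda$ (Definition \ref{def:fusion}); and split the outer transport of the resulting product by multiplicativity of $d$ once more --- legitimate since the corresponding pair-path in $L(G\times G)$ arises likewise from a one-dimensional reparametrization family. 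Reassembling the factors would give $\widetilde{con}(p_{1}\otimes p_{2})\cdot\widetilde{con}(p_{1}'\otimes p_{2}')$. Since the thinness of every auxiliary homotopy is immediate from the reparametrization device and all the algebra is formal, the step I expect to cost the most care --- the main obstacle --- is the bookkeeping: pinning down exactly which triples in $PG^{[3]}$ and which pairs in $\thinpairs{L(G\times G)}$ to use, and confirming that the required thin homotopies can be organized into smooth families, i.e. that the relevant maps into $\thinpairs{LG}$ and $\thinpairs{L(G\times G)}$ are genuinely plots.
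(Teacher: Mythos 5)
Your proposal is correct and follows essentially the same route as the paper's proof: independence of the smoothing function via the cocycle condition together with the compatibility of $d$ and $\lambda$ applied along a thin family interpolating between the two smoothing functions (you make the homotopy explicit as a convex combination, the paper encodes the same computation in a commutative diagram), smoothness for a fixed smoothing function from the smoothness of $d$ and $\lambda$, and multiplicativity from the multiplicativity of $d$ and $\lambda$ using that the relevant pair-paths, arising from common reparametrization families, are thin in $L(G\times G)$.
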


\begin{proof}
We show the independence of the smoothing function $\phi$. If $\phi'$ is another smoothing function, then $\phi$ and $\phi'$ are smoothly homotopic. We obtain  loops $\tilde\tau'_1$ and $\tilde\tau'_2$ that are thin homotopic to $\tilde\tau_1$ and $\tilde\tau_2$, respectively. The diagram
\begin{equation*}
\alxydim{@C=4em}{&\mathcal{L}_{\tilde\tau_1} \otimes \mathcal{L}_{\tilde\tau_2} \ar[dd]_{d} \ar[r]^-{\lambda} & \mathcal{L}_{(\tau_1 \circ \phi) \lop (\tau_2 \circ \phi)} \ar[dr]^{d} \ar[dd]^{d} \\ \mathcal{L}_{\tau_1} \otimes \mathcal{L}_{\tau_2} \ar[ur]^{d} \ar[dr]_{d} &&& \mathcal{L}_{con(\tau_1,\tau_2)} \\ &\mathcal{L}_{\tilde\tau_1'} \otimes \mathcal{L}_{\tilde\tau_2'} \ar[r]_-{\lambda} & \mathcal{L}_{(\tau_1 \circ \phi') \lop (\tau_2 \circ \phi')} \ar[ur]_{d}}
\end{equation*}
is commutative: the triangular diagrams commute due to the cocycle condition for $d$ (Definition \ref{def:thes} \erf{def:thes:a}), and the rectangular diagram commutes due to the compatibility condition for $\lambda$ and $d$ (Definition \ref{def:comp} \erf{def:comp:a}).
This shows the independence of the choice of $\phi$.

Smoothness can be checked with fixed smoothing function and then follows from the smoothness of the thin structure and the fusion product. In order to see the multiplicativity, we notice that the thin paths from $\tau_i$ to $\tilde\tau_i$ give a thin path in $L(G \times G)$. Thus, the multiplicativity of $d$ and the one of the    fusion product imply the claimed property.
\end{proof}

\subsection{Disjoint-commutativity}

\label{sec:discomm}

A loop $\tau\in LG$ is \emph{supported on an interval} $I \subset S^1$ if $\tau(z)=1$ for all $z\in S^1\setminus I$. 

\begin{theorem}
\label{th:localcomm}
Suppose $\mathcal{L}$ is a central extension of $LG$ equipped with a multiplicative fusion product and a multiplicative, fusive thin homotopy equivariant structure. Then,
$\mathcal{L}$ is disjoint commutative, i.e. if $\tau_1,\tau_2 \in LG$ are supported on disjoint intervals, and  $p_1 \in \mathcal{L}_{\tau_1}$ and $p_2\in \mathcal{L}_{\tau_2}$,  then $p_1\cdot p_2=p_2\cdot p_1$.
\end{theorem}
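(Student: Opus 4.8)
The idea is to reduce disjoint commutativity to a statement about flat loops and the canonical section $\can$ from Proposition~\ref{prop:can}, using the fusion product to decompose $\tau_1$ and $\tau_2$ into pieces supported on the common interval. Suppose $\tau_1$ is supported on $I_1$ and $\tau_2$ on $I_2$ with $I_1 \cap I_2 = \emptyset$. Choose a point $z_0 \in S^1$ outside both intervals, and cut $S^1$ at $z_0$ to view loops as paths. Pick two paths $\gamma_1, \gamma_2 \in PG$ based at $1$ (i.e. with $\gamma_i(0)=\gamma_i(1)=1$, so flat pairs are allowed) such that $\gamma_1$ agrees with $\tau_1$ on $I_1$ and is constant $=1$ elsewhere, and similarly $\gamma_2$ agrees with $\tau_2$ on $I_2$. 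Then $\tau_1 = \gamma_1 \lop \id$ and $\tau_2 = \id \lop \gamma_2$ after a thin reparametrization, and crucially $\tau_1\tau_2 = \tau_2\tau_1 = \gamma_1 \lop \gamma_2$ as loops in $LG$ (since the supports are disjoint, the pointwise product is just the ``overlay'', independent of order).

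First I would set up this decomposition carefully: using the thin homotopy equivariant structure $d$ to move $p_1 \in \mathcal{L}_{\tau_1}$ to $\mathcal{L}_{\gamma_1 \lop \id}$ and $p_2 \in \mathcal{L}_{\tau_2}$ to $\mathcal{L}_{\id \lop \gamma_2}$, and likewise moving the products. The key structural facts to invoke are: (1) neutrality of $\can$ with respect to fusion (Proposition~\ref{prop:can}\,\erf{prop:can:neutral}), so $p_1 = \lambda(p_1' \otimes \can_{\id})$ where $p_1' \in \mathcal{L}_{\gamma_1 \lop \id}$ and $\can_{\id} \in \mathcal{L}_{\id \lop \id} = \mathcal{L}_{\flat(\id)}$; (2) $\can$ is a group homomorphism (Proposition~\ref{prop:can}\,\erf{prop:can:hom}), in particular $\can_{\id}$ is the identity-component element; (3) multiplicativity of the fusion product. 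Writing $p_1' \in \mathcal{L}_{\gamma_1 \lop \id}$ and $p_2' \in \mathcal{L}_{\id \lop \gamma_2}$ as lifts, I would compute
\begin{equation*}
p_1 \cdot p_2 = \lambda(p_1' \otimes \can_{\id}) \cdot \lambda(\can_{\id} \otimes p_2') = \lambda(p_1' \can_{\id} \otimes \can_{\id} p_2') = \lambda(p_1' \otimes p_2')
\end{equation*}
using multiplicativity of $\lambda$ and that $\can_{\id}$ acts as identity (so $p_1'\can_{\id} = p_1'$, $\can_{\id}p_2' = p_2'$), modulo thin transport terms that I need to track. Symmetrically, $p_2 \cdot p_1 = \lambda(\can_{\id} p_2' \otimes p_1' \can_{\id})$; but here the issue is that $\lambda$ takes arguments in $\mathcal{L}_{\gamma_i \lop \gamma_j}$ in a \emph{fixed} order, so to express $p_2 \cdot p_1$ I need to first move $p_2$ to the ``left slot'' and $p_1$ to the ``right slot'', which is exactly a reparametrization of $S^1$ swapping the two halves --- i.e. an application of the symmetrization property (Definition~\ref{def:comp}\,\erf{item:symmetrizes}) together with the $\diff^+(S^1)$-action of Lemma~\ref{lem:equiv}.

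The main obstacle, as I see it, is bookkeeping the thin-homotopy transports and the reparametrizations so that everything lands in the correct fibre, and invoking the symmetrization axiom at precisely the right moment to interchange the two fusion slots. Concretely: $p_2 \cdot p_1$ naturally wants $p_2$ supported on the "first" path and $p_1$ on the "second", whereas $p_1 \cdot p_2$ has them the other way; the difference is the rotation $r_\pi$, and symmetrization of $d$ with respect to $\lambda$ is the statement that $\can$ and $\lambda$ are compatible with this flip. Since both $\gamma_1$ and $\gamma_2$ are based at $1$, one can also insert $\can$-sections freely (they are neutral), and I expect the cleanest route is: show $p_1 \cdot p_2 = \lambda(p_1' \otimes p_2')$ and $p_2 \cdot p_1 = \lambda(p_1' \otimes p_2')$ both equal the \emph{same} canonical element, where the second identity uses symmetrization to undo the apparent slot-swap. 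A subtlety worth isolating as a lemma is that for disjointly-supported loops the group product in $LG$ literally equals $\gamma_1 \lop \gamma_2$ regardless of order, so there is genuinely only one target fibre and the whole argument takes place inside $\mathcal{L}_{\gamma_1 \lop \gamma_2}$ once the thin transports are applied. I would also double check the edge case where an interval wraps around $z_0$, handled by choosing $z_0$ in the (nonempty, since the supports are disjoint closed-ish intervals) complement.
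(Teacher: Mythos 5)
Your proposal follows the paper's own proof in all essentials: the paper likewise reduces to the configuration $\tau_1'\lop\id_1$, $\id_1\lop\tau_2'$ by a diffeomorphism of $S^1$ homotopic to the identity together with the multiplicativity of $d$ (the content of Lemma \ref{lem:equiv}), identifies group products with fusion products using $\can_{\id_1}=1$ and its neutrality (Lemma \ref{lemma2}), and treats the reversed product via the symmetrization property and the rotation by $\pi$ (Proposition \ref{prop:localcomm}). Only note that your intermediate expression $\lambda(\can_{\id}p_2'\otimes p_1'\can_{\id})$ is not well-typed, and that a correct slot-wise application of the multiplicativity of $\lambda$ to $p_2\cdot p_1=\lambda(\can_{\id_1}\otimes p_2')\cdot\lambda(p_1'\otimes\can_{\id_1})$ yields $\lambda(p_1'\otimes p_2')$ at once, so the slot-swap you anticipate (and with it the symmetrization step, used both in your plan and in the paper's Proposition \ref{prop:localcomm}) can in fact be avoided in this prototypical case.
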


Theorem \ref{th:localcomm} applies to the  universal central extension $\wzwmodel$ of a compact, simple, connected, simply-connected  Lie group $G$, as seen in Example \ref{ex:wzwmodel}. In that case, disjoint commutativity was known to hold, verified by a direct calculation using the Mickelsson model \cite[Lemma 3.1]{Gabbiani1993}.

For the proof Theorem \ref{th:localcomm}, we start with the following prototypical situation. Suppose $\tau_1,\tau_2\in PG$ are loops based at $1\in G$. Then, $\tau_1 \lop \id_1$ and $\id_1 \lop \tau_2$ are loops supported on disjoint intervals. In particular, \begin{equation*}
(\tau_1 \lop \id_1)(\id_1 \lop \tau_2) = (\id_1 \lop \tau_2)(\tau_1 \lop \id_1)\text{.}
\end{equation*}

\begin{proposition}
\label{prop:localcomm}
We have $p_1 \cdot p_2= p_2 \cdot p_1$ for all  $p_1\in \mathcal{L}_{\tau_1\lop \id_1}$ and $p_2\in\mathcal{L}_{\id_1 \lop \tau_2}$. 
\end{proposition}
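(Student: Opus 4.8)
The plan is to show that each of the two products $p_1\cdot p_2$ and $p_2\cdot p_1$ equals the single element $\lambda(p_1\otimes p_2)\in\mathcal{L}_{\tau_1\lop\tau_2}$; commutativity then follows immediately. Under the standing hypotheses (multiplicative fusion product, multiplicative fusive thin homotopy equivariant structure) Proposition \ref{prop:can} applies, so $\mathcal{L}$ carries the neutral section $\can$ along $\flat$, and the whole argument uses only the multiplicativity of the fusion product and the properties of $\can_{\id_1}$, where $\id_1\in PG$ is the constant path at $1\in G$ (note that $\flat(\id_1)=\id_1\lop\id_1$ is the constant loop, so $\can_{\id_1}$ lies in the fibre of $\mathcal{L}$ over the identity of $LG$).

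First I would observe that $\can_{\id_1}$ is the identity element of the group $\mathcal{L}$. Indeed, $\id_1$ is the identity of $PG$, so Proposition \ref{prop:can}\erf{prop:can:hom} gives $\can_{\id_1}\cdot\can_{\id_1}=\can_{\id_1\cdot\id_1}=\can_{\id_1}$, and an idempotent element of a group is the identity. Consequently $p_1\cdot\can_{\id_1}=p_1$ and $\can_{\id_1}\cdot p_2=p_2$.

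Next I would note that $(\tau_1,\id_1,\id_1)$ and $(\id_1,\id_1,\tau_2)$ both lie in $PG^{[3]}$ — all the paths involved begin and end at $1\in G$ — and that their componentwise product in $PG^{[3]}$ is $(\tau_1\cdot\id_1,\id_1\cdot\id_1,\id_1\cdot\tau_2)=(\tau_1,\id_1,\tau_2)$. Applying the multiplicativity of $\lambda$ to this pair of triples, with $p_{12}=p_1\in\mathcal{L}_{\tau_1\lop\id_1}$, $p_{23}=\can_{\id_1}$, $p_{12}'=\can_{\id_1}$ and $p_{23}'=p_2\in\mathcal{L}_{\id_1\lop\tau_2}$, yields
\begin{equation*}
\lambda(p_1\otimes\can_{\id_1})\cdot\lambda(\can_{\id_1}\otimes p_2)=\lambda\big((p_1\cdot\can_{\id_1})\otimes(\can_{\id_1}\cdot p_2)\big)\text{.}
\end{equation*}
By the neutrality of $\can$ for fusion (Proposition \ref{prop:can}\erf{prop:can:neutral}) the left-hand side is $p_1\cdot p_2$, and by the previous paragraph the right-hand side is $\lambda(p_1\otimes p_2)$; hence $p_1\cdot p_2=\lambda(p_1\otimes p_2)$.

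Finally I would run the same computation with the two triples interchanged, that is, with $p_{12}=\can_{\id_1}$, $p_{23}=p_2$, $p_{12}'=p_1$ and $p_{23}'=\can_{\id_1}$; multiplicativity of $\lambda$ and neutrality of $\can$ give $p_2\cdot p_1=\lambda(p_1\otimes p_2)$ in exactly the same way, and comparing the two identities completes the proof. The argument is entirely formal, so there is no genuine \quot{hard part}; the only point that needs care is the bookkeeping of fibres — in particular verifying that the chosen triples multiply to $(\tau_1,\id_1,\tau_2)$, so that both applications of multiplicativity land in $\mathcal{L}_{\tau_1\lop\tau_2}$, the common fibre of $p_1\cdot p_2$ and $p_2\cdot p_1$.
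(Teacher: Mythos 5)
Your proof is correct, and it takes a genuinely different --- in fact leaner --- route than the paper's. The paper also first proves $p_1\cdot p_2=\lambda(p_1\otimes p_2)$ exactly as you do (this is its Lemma \ref{lemma2}), but it then obtains commutativity by invoking the symmetrizing property of the thin homotopy equivariant structure $d$ (Definition \ref{def:comp} \erf{item:symmetrizes}) over the triple $(\tau_1,\id_1,\tau_2)$, and afterwards the multiplicativity of $d$ along the thin path given by rotation by $-\pi$, to turn the resulting identity back into $p_1\cdot p_2=p_2\cdot p_1$. You avoid $d$ altogether: applying the multiplicativity of $\lambda$ a second time, with the two triples $(\id_1,\id_1,\tau_2)$ and $(\tau_1,\id_1,\id_1)$ taken in the opposite order, gives $p_2\cdot p_1=\lambda(\can_{\id_1}p_1\otimes p_2\can_{\id_1})=\lambda(p_1\otimes p_2)$ over the same product triple $(\tau_1,\id_1,\tau_2)$ as before, and the fibre bookkeeping you spell out is exactly what makes the two applications comparable. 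What your argument buys is that this prototypical disjoint-commutativity already follows from the multiplicative fusion product alone (the facts about $\can$ you use --- idempotence forcing $\can_{\id_1}=1$, and neutrality --- need only $\lambda$, as is visible in the proof of Proposition \ref{prop:can}), so the symmetrizing axiom is not needed at this step; what the paper's route buys is a uniform exercise of the fusive and multiplicative properties of $d$, which are in any case indispensable right afterwards, since the reduction of the general Theorem \ref{th:localcomm} to this prototypical situation still goes through the multiplicativity of the thin homotopy equivariant structure.
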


In order to prove this,  we note that the multiplication in this particular situation coincides with the fusion product.

\begin{lemma}
\label{lemma2}
$p_1\cdot p_2 = \lambda(p_1\otimes p_2)$.
\end{lemma}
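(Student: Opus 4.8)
The plan is to exploit the canonical section $\can$ produced by Proposition \ref{prop:can}, which exists here because $\mathcal{L}$ carries a multiplicative fusion product and a fusive thin homotopy equivariant structure. First I would observe that $\tau_1 \lop \id_1$ and $\id_1 \lop \tau_2$ are flat-loop-adjacent data: writing $p_1 \in \mathcal{L}_{\tau_1 \lop \id_1}$ and $p_2 \in \mathcal{L}_{\id_1 \lop \tau_2}$, the neutrality property \erf{prop:can:neutral} gives $p_1 = \lambda(p_1 \otimes \can_{\id_1})$ and $p_2 = \lambda(\can_{\id_1} \otimes p_2)$, since $\id_1$ is (the path underlying) a flat loop and $\can_{\id_1} \in \mathcal{L}_{\id_1 \lop \id_1} = \mathcal{L}_{\id_1}$ is the canonical lift of the constant loop, which by \erf{prop:can:hom} is the neutral element of $\mathcal{L}$.

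The key step is then a single application of multiplicativity of $\lambda$ (Definition \ref{def:fusion}(b)). We compute
\begin{equation*}
p_1 \cdot p_2 = \lambda(p_1 \otimes \can_{\id_1}) \cdot \lambda(\can_{\id_1} \otimes p_2) = \lambda\bigl((p_1 \cdot \can_{\id_1}) \otimes (\can_{\id_1} \cdot p_2)\bigr) = \lambda(p_1 \otimes p_2)\text{,}
\end{equation*}
where the last equality uses that $\can_{\id_1}$ is the identity of the group $\mathcal{L}$. Here one must check that the quadruple of paths $(\tau_1, \id_1, \id_1, \id_1)$ — with second triple $(\id_1, \id_1, \tau_2)$ — lies in $PG^{[3]}$ as required; this is immediate since all these paths share initial and terminal point $1 \in G$. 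Symmetrically, starting instead from $p_1 = \lambda(\can_{\id_1} \otimes p_1)$ is not needed, but it is worth noting the computation only used one grouping.

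The main obstacle, such as it is, is bookkeeping rather than substance: one has to be careful that the fibrewise identifications are the intended ones — that $\can_{\id_1}$ really sits over the constant loop and is the group unit of $\mathcal{L}$ (this is exactly \erf{prop:can:hom} applied to $\gamma_1 = \gamma_2 = \id_1$, together with the fact that $\can$ is a section along $\flat$), and that the loops $\tau_1 \lop \id_1$ and $\id_1 \lop \tau_2$ appearing as subscripts are literally $\flat$ of the relevant $PG^{[3]}$-triples. Once those identifications are pinned down, the lemma is a two-line consequence of neutrality and multiplicativity of the fusion product, with no analytic or homotopy-theoretic input. I would also remark, as the paper will surely want for the subsequent Proposition \ref{prop:localcomm}, that the same argument shows $p_2 \cdot p_1 = \lambda(p_1 \otimes p_2)$ is not automatic — the symmetry $p_1 \cdot p_2 = p_2 \cdot p_1$ will presumably require the \emph{symmetrizing} half of fusivity, which is why Lemma \ref{lemma2} is isolated as the purely multiplicative input.
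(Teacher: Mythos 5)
Your argument is correct and is essentially the paper's own proof: both establish that $\can_{\id_1}$ is the group unit via Proposition \ref{prop:can} \erf{prop:can:hom}, write $p_1=\lambda(p_1\otimes \can_{\id_1})$ over $(\tau_1,\id_1,\id_1)$ and $p_2=\lambda(\can_{\id_1}\otimes p_2)$ over $(\id_1,\id_1,\tau_2)$, and conclude by one application of multiplicativity of $\lambda$. No substantive difference in approach.
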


\begin{proof}
Proposition \ref{prop:can} \erf{prop:can:hom} implies  $\can_{\id_1}=1$. Thus,
over the triple $(\tau_1,\id_1,\id_1) \in  PG^{[3]}$ we can write $p_1=\lambda(p_1 \otimes 1)$. Likewise, over $(\id_1,\id_1,\tau_2)$ we have $p_2=\lambda(1\otimes p_2)$. Multiplicativity of the fusion product then shows the claim:
\begin{equation*}
p_1\cdot p_2 =\lambda(p_1 \otimes 1) \cdot \lambda(1\otimes p_2) = \lambda(p_1 \cdot 1 \otimes 1 \cdot p_2)=\lambda(p_1 \otimes p_2)\text{.}
\end{equation*}
\end{proof}

\begin{proof}[Proof of Proposition \ref{prop:localcomm}]
That the thin homotopy equivariant structure symmetrizes the fusion product (Definition \ref{def:comp} \erf{item:symmetrizes}) means, over the triple $(\tau_1,\id_1,\tau_3)$,
\begin{equation*}
d_{\tau_1 \lop \tau_3,\prev{\tau_3} \lop \prev{\tau_1}}(\lambda(p_1 \otimes p_2)) = \lambda(d_{\id_1 \lop \tau_3,\prev{\tau_3} \lop \id_1}(p_2) \otimes d_{\tau_1 \lop \id_1,\id_1 \lop \prev{\tau_1}}(p_1))\text{.}
\end{equation*}
We can apply  Lemma \ref{lemma2} on the left (to the given data) and on the right  (to the pair $(\prev{\tau_3},\prev{\tau_1})$ and the elements $d_{\id_1 \lop \tau_3,\prev{\tau_3} \lop \id_1}(p_2)$ and $d_{\tau_1 \lop \id_1,\id_1 \lop \prev{\tau_1}}(p_1)$) and obtain
\begin{equation*}
d_{\tau_1 \lop \tau_3,\prev{\tau_3} \lop \prev{\tau_1}}(p_1 \cdot p_2) = d_{\id_1 \lop \tau_3,\prev{\tau_3} \lop \id_1}(p_2) \cdot d_{\tau_1 \lop \id_1,\id_1 \lop \prev{\tau_1}}(p_1)\text{.} \end{equation*}
Now we want to use the multiplicativity of the thin homotopy equivariant structure. We claim that $((\id_1 \lop \tau_3,\tau_1 \lop \id_1),(\prev{\tau_3} \lop \id_1,\id_1 \lop \prev{\tau_1})) \in \thinpairs{L(G \times G)}$. Indeed, let $r_{-\pi}:S^1 \to S^1$ denote the rotation  by an angle of $-\pi$. Then, $(\id_1 \lop \tau_3)\circ r_{-\pi}= \prev{\tau_3} \lop \id_1$ and $(\tau_1 \lop \id_1) \circ r_{-\pi}=\id_1 \lop \prev{\tau_1}$. Fixing a path $\varphi_t$ from $\varphi_0=\id_{S^1}$ to $\varphi_1=r_{-\pi}$ we obtain a path
\begin{equation*}
((\id_1 \lop \tau_3)\circ \varphi_t,(\tau_1 \lop \id_1) \circ \varphi_t)
\end{equation*} 
in $L(G \times G)$, which is thin as it factors through $\varphi_t(z)\in S^1$. Thus, the multiplicativity of the thin structure now implies $p_1 \cdot p_2= p_2 \cdot p_1$.
\end{proof}

\begin{proof}[Proof of Theorem \ref{th:localcomm}]
We consider two general loops $\tau_1,\tau_2 \in LG$ with disjoint supports $I,J \subset S^1$. There is an orientation-preserving diffeomorphism $\varphi:S^1 \to S^1$ such that $\varphi^{-1}(I) \subset (\frac{1}{2},1)$ and $\varphi^{-1}(J) \subset (0,\frac{1}{2})$. Then, $\tau_1 \circ \varphi = \tau_1' \lop \id_1$ and $\tau_2 \circ \varphi = \id_1 \lop \tau_2'$, where $\tau_1'(\mathrm{e}^{2\pi\im t}) = (\tau_1\circ \varphi)(\mathrm{e}^{\pi\im(1-t)})$ and $\tau_2'(\mathrm{e}^{2\pi\im t}) = (\tau_2 \circ \varphi)(\mathrm{e}^{\pi\im t})$. The map $\varphi$ is homotopic to $\id_{S^1}$ via a family $\varphi_t$ with $\varphi_0=\id_{S^1}$ and $\varphi_1=\varphi$. The path $(\tau_1 \circ \varphi_t, \tau_2 \circ \varphi_t)=(\tau_1,\tau_2) \circ \varphi_t$ in $L(G \times G)$ is thin. 
If $p_1\in \mathcal{L}_{\tau_1}$ and $p_2 \in \mathcal{L}_{\tau_2}$, then $d_{\tau_1,\tau_1'\lop \id_1}(p_1)\in \mathcal{L}_{\tau_1'\lop\id_1}$ and $d_{\tau_2,\id_1\lop\tau_2'}(p_2)\in\mathcal{L}_{\id_1\lop\tau_2'}$. Proposition \ref{prop:localcomm} applies and
\begin{equation*}
d_{\tau_1,\tau_1'\lop \id_1}(p_1)\cdot d_{\tau_2,\id_1\lop\tau_2'}(p_2)= d_{\tau_2,\id_1\lop\tau_2'}(p_2) \cdot d_{\tau_1,\tau_1'\lop \id_1}(p_1)\text{.}
\end{equation*} 
We use the multiplicativity of the thin structure: on the left for the thin path $(\tau_1,\tau_2) \circ \varphi_t$ and on the right for the thin path $(\tau_2,\tau_1) \circ \varphi_t$, and obtain
\begin{equation*}
d_{\tau_1\tau_2,(\tau_1'\lop \id_1)(\id_1\lop\tau_2')}(p_1\cdot p_2)= d_{\tau_2\tau_1,(\id_1\lop\tau_2')\cdot(\tau_1'\lop \id_1)}(p_2\cdot p_1)\text{.}
\end{equation*}
We  have
\begin{equation*}
(\tau_1'\lop \id_1)(\id_1\lop\tau_2')=(\id_1\lop\tau_2')\cdot(\tau_1'\lop \id_1)=\tau_1'\lop\tau_2'
\quand
\tau_1 \tau_2=\tau_2\tau_1\text{,}
\end{equation*}
and as $d_{\tau_1\tau_2,\tau_1'\lop\tau_2'}$ is an isomorphism, we get $p_1 \cdot p_2= p_2 \cdot p_1$.
\end{proof}

\setsecnumdepth{1}

\section{Integrable thin homotopy equivariant structures}

\label{sec:fusextth}

A thin homotopy equivariant structure on a central extension $\mathcal{L}$ can be induced from certain connections on (the underlying principal $\ueins$-bundle of) $\mathcal{L}$. 

\begin{definition}
\label{def:thin}
\begin{enumerate}[(a)]
\item 
A connection $\nu$ on  $\mathcal{L}$ is called \emph{thin}, if two thin  paths $\gamma_1,\gamma_2$ with common initial point $\gamma_1(0)=\gamma_2(0)$ and common end point $\gamma_1(1)=\gamma_2(1)$ induce the same parallel transport, $pt^{\nu}_{\gamma_1}=pt^{\nu}_{\gamma_2}$. 

\item
A thin connection $\nu$ on  $\mathcal{L}$ is called \emph{superficial}, if two loops $\tau_1,\tau_2\in LLG$ have the same holonomy whenever they are homotopic via a map $h\maps [0,1] \times S^1 \times S^1 \to G$ of rank at most two.

\end{enumerate}
\end{definition}

\noindent
Every thin connection $\nu$ induces a thin homotopy equivariant structure $d^{\nu}$ by
$d^{\nu}_{\tau_0,\tau_1} \df pt^\nu_\tau$,
where $\tau$ is an arbitrary thin path from $\tau_0$ to $\tau_1$, see \cite[Lemma 3.1.5]{waldorf11}. Superficiality is a property of connections in the image of transgression, and will be relevant in Section \ref{sec:transgression}.

\begin{definition}
\label{def:compatible}
Suppose $\mathcal{L}$ is equipped with a fusion product $\lambda$ and a thin connection $\nu$. 
\begin{enumerate}[(a)]

\item 
$\nu$ is  \emph{compatible} with $\lambda$, if the bundle morphism $\lambda$ over $PG^{[3]}$ is  connection-preserving. 

\item
$\nu$ \emph{symmetrizes} $\lambda$, if $d^{\nu}$ is symmetrizing.

\item
$\nu$ is  \emph{fusive} if it is compatible and symmetrizing.

\end{enumerate}
\end{definition}

\noindent
If $\nu$ is a thin and fusive connection, then $d^{\nu}$ is fusive with respect to $\lambda$. The question whether or not a given thin homotopy equivariant structure can be induced from a thin connection gives rise to the following definition.

\begin{definition}
\label{def:thinstructure}
A   thin homotopy equivariant structure $d$ is called \emph{thin structure}, if there exists a  superficial connection $\nu$ such that $d^{\nu}=d$. In the presence of a fusion product, it is called \emph{fusive thin structure} if $\nu$ can be chosen fusive. 
\end{definition}

It remains to discuss multiplicativity in the setting of thin structures. This requires some attention, because it is not clear which multiplicativity condition one should impose on a connection $\nu$. First of all we note that every connection $\nu$ on $\mathcal{L}$ determines a 1-form $\varepsilon_{\nu}\in\Omega^1(LG \times LG)$ by 
\begin{equation}
\label{eq:erroroneform}
 \nu_{p_1}(X_1) + \nu_{p_2}(X_2)=\nu_{p_1p_2}(p_1X_2 + X_1p_1) + \varepsilon_{\nu}|_{\tau_1,\tau_2}(\mathcal{X}_1,\mathcal{X}_2)
\end{equation}
for all $\tau_1,\tau_2\in LG$, $\mathcal{X}_i\in T_{\tau_i}LG$, as well as $p_i \in \mathcal{L}_{\tau_i}$ and $X_i \in T_{p_i}\mathcal{L}$ such that $p_{*}(X_i)=\mathcal{X}_i$. We call $\varepsilon_{\nu}$ the \emph{error 1-form} of $\nu$, it can be seen as a measure for the non-multiplicativity of $\nu$. We want to impose a multiplicativity condition for the connection $\nu$ by requiring that $\varepsilon_{\nu}$ admits a path splitting in the following sense.

\begin{definition}
\label{def:pathsplitting}
Let $X$ be a smooth manifold, $k\in \N$, and $\varepsilon\in \Omega^{k}(LX)$. A \emph{path splitting} of $\varepsilon$ is a $k$-form $\kappa\in\Omega^k(PX)$ such that 
$\lop^{*}\varepsilon = \pr_2^{*}\kappa - \pr_1^{*}\kappa$ on $PX^{[2]}$.
\end{definition}

\label{par:pathsplittingmult}
We formulate two additional conditions for path splittings that will be required, too. We recall that for every (Fréchet or diffeological) Lie group $K$ and every $k \in \N$ we have a complex
\begin{equation}
\label{eq:multiplicative}
\alxydim{}{0\ar[r] & \Omega^{k}(K) \ar[r] & \Omega^{k}(K \times K) \ar[r] & \hdots }
\end{equation}
whose differential $\Delta$ is the alternating sum over the pullbacks along the  face maps of the simplicial manifold $BK$. A form in the kernel of $\Delta$ is called \emph{multiplicative}. 

For $K=LG$, \erf{eq:erroroneform} implies that $\varepsilon_{\nu}$ is multiplicative: $\Delta\varepsilon_{\nu}=0$. 

For $K=PG$, it makes sense to require that path splittings of $\varepsilon_{\nu}$ are multiplicative.

\noindent
\label{par:pathsplittingcon}
For the second condition,
we recall from Section \ref{sec:flatloops} that for $\gamma\in PX$ and a smoothing function $\phi$ we have a retraction
$\phi_{\gamma}:[0,1] \to PX$ with $\phi_{\gamma}(0)=\id_{\gamma(0)}$ and $\phi_{\gamma}(1)=\gamma \circ \phi$. A 1-form $\kappa\in\Omega^1(PX)$ is called \emph{contractible}, if\begin{equation*}
\int_{\phi_{\gamma}}\kappa=0
\end{equation*}
for all $\gamma\in PX$ and some (and thus all) smoothing functions $\phi$. 

Before continuing, let us try to elucidate path splittings with the following example.

\begin{example}
\label{ex:pathsplitting}
For $S$ a $l$-dimensional compact oriented smooth  manifold, possibly with boundary, we denote by $\ev:C^{\infty}(S,X)\times S \to X$ the evaluation map, and let
\begin{equation*}
\tau_{S}: \Omega^{k}(X) \to \Omega^{k-l}(C^{\infty}(S,X)): \rho \mapsto \int_{S}\ev^{*}\rho
\end{equation*}
be the usual transgression of differential forms to the mapping space. 
\begin{comment}
For $S=S^1$, this means
\begin{equation*}
\tau_S(\rho)|_{\tau}(X_{2},...,X_k) = \int_{0}^1   \rho_{\tau(z)}(\partial_z\tau(z),X_2,...,X_k) \mathrm{d}z\text{.}
\end{equation*}
\end{comment}
For $\rho\in \Omega^{k+1}(X)$ we set  $\varepsilon:= \tau_{S^1}(\rho)\in\Omega^k(LX)$ and  $\kappa := \tau_{[0,1]}(\rho)|_{PX} \in \Omega^{k}(PX)$. We claim:
\begin{itemize}

\item 
$\kappa$ is a path splitting for $\varepsilon$. To see this, consider a path $\gamma:[0,1] \to PG^{[2]}$, with $\gamma=(\gamma_1,\gamma_2)$ and the associated homotopies $h_{\gamma_i}: [0,1]^2 \to G$; $h_{\gamma_i}(t,s)=\gamma_i(t)(s)$. Note that
\begin{equation*}
\int_{\gamma_i}\kappa = \int_{0}^1 \kappa_{\gamma_i(t)}(\partial_t\gamma_i(t)) \mathrm{d}t=\int_0^1 \int_0^1 \rho_{\gamma_i(t)(s)}(\partial_s\gamma_i(t)(s),\partial_t\gamma_i(t)(s))\mathrm{d}s\mathrm{d}t =-\int_{h_{\gamma_i}}\rho\text{.}
\end{equation*}
We have
\begin{equation*}
\int_{\gamma}\lop^{*}\varepsilon=\int_0^1 \int_0^1 \rho_{(\lop\circ \gamma)(t)(s)}(\partial_s(\lop\circ\gamma)(t)(z),\partial_t(\lop\circ\gamma)(t)(s))  \mathrm{d}s\mathrm{d}t\text{.}
\end{equation*}
Splitting the integral over $s$ in two parts (from $0$ to $\frac{1}{2}$) and (from $\frac{1}{2}$ to $1$), expressing it in terms of $\gamma_1$ and $\gamma_2$, and reparameterizing, we get
\begin{equation*}
\int_{\gamma}\lop^{*}\varepsilon = \int_{h_{\gamma_1}}\rho- \int_{h_{\gamma_2}}\rho=\int_{\gamma} \pr_2^{*}\kappa - \pr_1^{*}\kappa\text{.}
\end{equation*}
But two 1-forms  coincide if their integrals along all paths coincide; this shows $\lop^{*}\varepsilon=\pr_2^{*}\kappa-\pr_1^{*}\kappa$.
\begin{comment}
The full calculation is:
\begin{eqnarray*}
\int_{\gamma}\lop^{*}\varepsilon &=& \int_{\lop\circ \gamma}\varepsilon\\&=& \int_0^1 \varepsilon_{(\lop\circ\gamma)(t)}(\partial_t(\lop\circ\gamma)(t))\mathrm{d}t \\&=&\int_0^1 \int_0^1 \rho_{(\lop\circ \gamma)(t)(s)}(\partial_s(\lop\circ\gamma)(t)(z),\partial_t(\lop\circ\gamma)(t)(s))  \mathrm{d}s\mathrm{d}t
\\&=&\int_0^1 \left\lbrace\int_0^{\frac{1}{2}} \rho_{\gamma_1(t)(2s)}(\partial_s\gamma_1(t)(2s),\partial_t\gamma_1(t)(2s))\mathrm{d}s+\int_{\frac{1}{2}}^{1} \rho_{\gamma_2(t)(2-2s)}(\partial_s\gamma_2(t)(2-2s),\partial_t\gamma_2(t)(2-2s))\mathrm{d}s \right \rbrace \mathrm{d}t
\\&=&\int_0^1 \left\lbrace\int_0^{1} \rho_{\gamma_1(t)(s)}(\partial_s\gamma_1(t)(s),\partial_t\gamma_1(t)(s))\mathrm{d}s-\int_{0}^{1} \rho_{\gamma_2(t)(s)}(\partial_s\gamma_2(t)(s),\partial_t\gamma_2(t)(s))\mathrm{d}s \right \rbrace \mathrm{d}t
\\&=&- \int_{\gamma_2^{\vee}}\rho \k + \int_{\gamma_1^{\vee}}\rho \\&=&\int_{\gamma_2}\kappa - \int_{\gamma_1}\kappa \\&=& \int_{\gamma} \pr_2^{*}\kappa - \pr_1^{*}\kappa
\end{eqnarray*}
\end{comment}

\item
If $X$ is a Lie group and $\rho$ is multiplicative, then $\kappa$ and $\varepsilon$ are multiplicative, too.
Indeed, $\tau_S$ is linear and natural with respect to smooth maps between smooth manifolds, and so commutes with the differential $\Delta$.

\item
If $k=1$, then $\kappa$ is contractible: if $\gamma \in PX$, then we have
\begin{equation*}
\int_{\phi_{\gamma}} \kappa = \int_{\phi_{\gamma}} \int_{[0,1]} \ev^{*}\rho=\int_{[0,1]^2} (\ev \circ (\phi_{\gamma} \times \id))^{*}\rho=0\text{,} \end{equation*}
because $(\ev \circ (\phi_{\gamma} \times \id))(t,s)=\gamma(t\phi(s))$  is a rank one map.

\end{itemize}
\end{example}

One can show that the error 1-form $\varepsilon_{\nu} \in \Omega^1(LG \times LG)$ of a fusive connection $\nu$ on $\mathcal{L}$ admits a path splitting, and  for compact groups $G$ even a multiplicative path splitting.
However, I do not know conditions that would guarantee the existence of a \emph{contractible} path splitting. This constitutes our multiplicativity condition.

\begin{definition}
\label{def:mfthinstructure}
A multiplicative and fusive  thin homotopy equivariant structure $d$ is called  \emph{mul\-ti\-pli\-ca\-ti\-ve and fusive thin structure}, if there exists a fusive superficial connection $\nu$ with $d^{\nu}=d$, whose error 1-form $\varepsilon_{\nu}$ admits a multiplicative and contractible path splitting. \end{definition}

\begin{definition}
\label{def:thinfusionext}
Let $G$ be a Lie group and $LG = C^{\infty}(S^1,G)$ be its loop group. 
\begin{enumerate}[(a)]
\item 
A \emph{thin fusion extension} of $LG$ is a central extension
\begin{equation*}
1 \to \ueins \to \mathcal{L} \to LG \to 1
\end{equation*}
together with a multiplicative fusion product and a multiplicative and fusive thin structure.

\item
An \emph{isomorphism} between thin  fusion extensions is a smooth isomorphism between central extensions that is fusion-preserving and thin.

\end{enumerate}
\end{definition}

\noindent
Thin fusion extensions form a category that we denote  by $\fusextth{LG}$.

Before coming to examples, we shall investigate the following interesting  feature of a thin fusion extension (Proposition \ref{prop:splitting} below). We recall that the Lie algebra of $LG$ is $L\mathfrak{g} = C^{\infty}(S^1,\mathfrak{g})$ with all operations defined pointwise. We denote the Lie algebra of a central extension  $\mathcal{L}$ by $\mathfrak{l}$. It is a central extension
\begin{equation*}
\alxydim{}{0 \ar[r] & \R \ar[r] & \mathfrak{l} \ar[r]^-{p_{*}} & L\mathfrak{g} \ar[r] & 0}
\end{equation*}
of Fréchet Lie algebras. 
We recall that a \emph{splitting} is a continuous linear map $\delta: L\mathfrak{g} \to \mathfrak{l}$ such that $p_{*} \circ \delta=\id_{L\mathfrak{g}}$.  Every connection $\nu$ on $\mathcal{L}$ induces -- via horizontal lift -- a splitting $\delta_{\nu}$.

An element  $\mathcal{X} \in L\mathfrak{g}$ is called \emph{linear loop}, if there exist a smooth map $f:S^1 \to \R$ and $X\in \mathfrak{g}$ such that $\mathcal{X}(z) = f(z)X$ for all $z\in S^1$.
The linear loops span $L\mathfrak{g}$. Every linear loop $\mathcal{X}$ can be represented -- as a tangent vector -- by a \emph{thin} curve, namely by  $\gamma_{\mathcal{X}}:\R \to LG: t \mapsto \exp(t\mathcal{X})$, with $\gamma_{\mathcal{X}}(t)(z)=\exp(tf(z)X)$. Note that $t \mapsto (1,\gamma_{\mathcal{X}}(t))$ is a smooth curve in $\thinpairs{LG}$. Thus, a thin homotopy equivariant structure $d$ on $\mathcal{L}$ produces a smooth curve $d_{\mathcal{X}}:\R \to \mathcal{L}:t \mapsto d_{1,\gamma_{\mathcal{X}}(t)}(1)$. \begin{comment}
In fact, $\R \to \thinpairs{LG}:t\mapsto (1,\gamma_{\mathcal{X}}(t))$ is a plot.  
\end{comment}

\begin{comment}
If $X_1,...,X_k$ is a basis of $\mathfrak{g}$, and $\mathcal{X}:S^1 \to \mathfrak{g}$ is smooth, then  $\mathcal{X}=f_1X_1 + ... + f_k X_k$.  
The subset  of linear loops is closed under scalar multiplication but not under sum. 
\end{comment}

\begin{lemma}
\label{lem:deltanu}
Suppose $\mathcal{L}$ is a central extension with a thin homotopy equivariant structure $d$. 
Let $\nu$ be a thin connection on $\mathcal{L}$ with $d^{\nu}=d$. Then, the splitting $\delta_{\nu}$ is $\diff^{+}(S^1)$-equivariant and satisfies
\begin{equation*}
\delta_{\nu}(\mathcal{X}) = \ddt 1t0 d_{\mathcal{X}}(t)
\end{equation*} 
for all linear loops $\mathcal{X}$. 
\end{lemma}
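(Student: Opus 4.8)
The plan is to identify, for a linear loop $\mathcal{X}$, the smooth curve $t\mapsto d_{\mathcal{X}}(t)$ with the $\nu$-horizontal lift of $\gamma_{\mathcal{X}}$ through the identity element $1\in\mathcal{L}$, from which both assertions follow. Recall that $\delta_{\nu}(\mathcal{X})\in T_1\mathcal{L}=\mathfrak{l}$ is, by construction, the unique $\nu$-horizontal vector projecting to $\mathcal{X}\in T_1LG=L\mathfrak{g}$, and that by thinness of $\nu$ one has $d^{\nu}_{\tau_0,\tau_1}=pt^{\nu}_{\tau}$ for \emph{any} thin path $\tau$ from $\tau_0$ to $\tau_1$ (see \cite[Lemma 3.1.5]{waldorf11}). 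For a linear loop $\mathcal{X}$ the curve $\gamma_{\mathcal{X}}$ is thin, hence so is its restriction to the interval between $0$ and $t$ (reparametrized onto $[0,1]$), which is a thin path from $1$ to $\gamma_{\mathcal{X}}(t)$. Since $d=d^{\nu}$ and parallel transport is reparametrization-invariant, $d_{\mathcal{X}}(t)=d_{1,\gamma_{\mathcal{X}}(t)}(1)$ equals the $\nu$-parallel transport of $1\in\mathcal{L}$ along $\gamma_{\mathcal{X}}$ up to time $t$; equivalently, $t\mapsto d_{\mathcal{X}}(t)$ is the $\nu$-horizontal lift of $\gamma_{\mathcal{X}}$ starting at $1$ (note $d_{\mathcal{X}}(0)=d_{1,1}(1)=1$, since the cocycle condition forces $d_{1,1}=\id$). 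Its derivative at $t=0$ is therefore $\nu$-horizontal, lies in $T_1\mathcal{L}$, and projects to $\dot\gamma_{\mathcal{X}}(0)=\mathcal{X}$; by uniqueness of the horizontal lift it equals $\delta_{\nu}(\mathcal{X})$, which establishes the formula $\delta_{\nu}(\mathcal{X})=\ddt 1t0 d_{\mathcal{X}}(t)$.

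For the $\diff^{+}(S^1)$-equivariance I would feed this formula into the $\diff^{+}(S^1)$-action \erf{eq:diffaction} on $\mathcal{L}$ from Lemma \ref{lem:equiv}, recalling that this action fixes $1\in\mathcal{L}$ because $1$ projects to a constant loop. Fix $\varphi\in\diff^{+}(S^1)$ and a linear loop $\mathcal{X}$ with $\mathcal{X}(z)=f(z)X$. The loops $1$, $\gamma_{\mathcal{X}}(t)$ and $\gamma_{\mathcal{X}}(t)\circ\varphi$ are pairwise thin homotopic — the last two via $s\mapsto\gamma_{\mathcal{X}}(t)\circ\varphi_s$ for a path $\varphi_s$ from $\id_{S^1}$ to $\varphi$, which is thin as it factors through $S^1$ — so the cocycle condition (Definition \ref{def:thes} \erf{def:thes:a}) yields
\[
d_{\mathcal{X}}(t)\cdot\varphi=d_{\gamma_{\mathcal{X}}(t),\,\gamma_{\mathcal{X}}(t)\circ\varphi}\bigl(d_{1,\gamma_{\mathcal{X}}(t)}(1)\bigr)=d_{1,\,\gamma_{\mathcal{X}}(t)\circ\varphi}(1).
\]
Since $\gamma_{\mathcal{X}}(t)\circ\varphi$ is the loop $z\mapsto\exp\bigl(t\,f(\varphi(z))\,X\bigr)$, which is exactly $\gamma_{\mathcal{X}\circ\varphi}(t)$ for the linear loop $\mathcal{X}\circ\varphi=(f\circ\varphi)(\cdot)\,X$, and $\mathcal{X}\circ\varphi$ is the image of $\mathcal{X}$ under the pre-composition action of $\varphi$ on $L\mathfrak{g}$, this reads $d_{\mathcal{X}}(t)\cdot\varphi=d_{\mathcal{X}\circ\varphi}(t)$. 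Differentiating at $t=0$, and using that $p\mapsto p\cdot\varphi$ is a smooth self-map of $\mathcal{L}$ fixing $d_{\mathcal{X}}(0)=1$, the left-hand side becomes $\delta_{\nu}(\mathcal{X})\cdot\varphi$ and the right-hand side $\delta_{\nu}(\mathcal{X}\circ\varphi)$. As the linear loops span $L\mathfrak{g}$ and $\delta_{\nu}$ as well as both $\diff^{+}(S^1)$-actions are linear, the identity $\delta_{\nu}(\mathcal{Y}\circ\varphi)=\delta_{\nu}(\mathcal{Y})\cdot\varphi$ extends to all $\mathcal{Y}\in L\mathfrak{g}$.

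I do not expect a genuine obstacle here: the only real care needed is the passage between the parallel-transport description of $d$ (via $d=d^{\nu}$) and the horizontal-lift description of $\delta_{\nu}$, together with the routine verification that every loop entering the cocycle condition above is thin homotopic to the others. Smoothness of the curve $t\mapsto d_{\mathcal{X}}(t)$ that is being differentiated has already been recorded before the statement of the lemma.
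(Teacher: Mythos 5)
Your proposal is correct and follows essentially the same route as the paper: the paper likewise identifies $\ddt 1t0 d_{\mathcal{X}}(t)$ with $\ddt 1t0 pt^{\nu}_{\gamma_{\mathcal{X}}}(1,t)=\delta_{\nu}(\mathcal{X})$ via thinness of $\gamma_{\mathcal{X}}$ and $d^{\nu}=d$, then uses the cocycle condition to get $d_{\mathcal{X}}(t)\cdot\varphi=d_{\mathcal{X}\circ\varphi}(t)$, differentiates at $t=0$, and extends to all of $L\mathfrak{g}$ by linearity and the spanning property of linear loops. Your extra remarks (reparametrization invariance of parallel transport, $d_{\mathcal{X}}(0)=1$, uniqueness of the horizontal lift) merely spell out steps the paper leaves implicit.
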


\begin{proof}
We calculate
\begin{equation}
\label{eq:deltanu}
\delta_{\nu}(\mathcal{X}) = \ddt 1t0 pt_{\gamma_{\mathcal{X}}}^{\nu}(1,t)=\ddt 1t0 d_{1,\gamma_{\mathcal{X}}(t)}(1) = \ddt 1t0 d_{\mathcal{X}}(t)\text{.}
\end{equation}
Next we consider $\varphi \in \diff^{+}(S^1)$  and a linear loop $\mathcal{X}$. Then, $\exp(t\mathcal{X})\circ \varphi=\exp(t(\mathcal{X}\circ \varphi))$ is thin homotopic to $\exp(t\mathcal{X})$, and we have
\begin{equation*}
d_{\mathcal{X}}(t)\cdot \varphi =  d_{1,\exp(t\mathcal{X})}(1)\cdot \varphi = d_{\exp(t\mathcal{X}),\exp(t(\mathcal{X}\circ \varphi))}(d_{1,\exp(t\mathcal{X})}(1)) = d_{1,\exp(t(\mathcal{X}\circ \varphi))}(1)=d_{\mathcal{X}\circ \varphi}(t)\text{.}
\end{equation*}
Taking derivatives and using \erf{eq:deltanu}, we obtain $\delta_{\nu}(\mathcal{X})\cdot \varphi = \delta_{\nu}(\mathcal{X} \circ \varphi)$.
Since the $\diff^{+}(S^1)$-actions on $L\mathfrak{g}$ and $\mathfrak{l}$ are linear, and $L\mathfrak{g}$ is spanned by the linear loops, we conclude that $\delta_{\nu}$ is equivariant.
\end{proof}

In case of a thin fusion extension, we obtain: 

\begin{proposition}
\label{prop:splitting}
Let $\mathcal{L}$ be a thin fusion extension with thin structure $d$. Then, there exists a unique splitting $\delta: L\mathfrak{g} \to \mathfrak{l}$ of the  Lie algebra extension, such that 
\begin{equation*}
\delta(\mathcal{X}) = \ddt 1t0 d_{\mathcal{X}}(t)
\end{equation*}
for all linear loops $\mathcal{X}$.  Moreover, this splitting is $\diff^{+}(S^1)$-equivariant. 
\end{proposition}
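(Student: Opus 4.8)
The plan is to reduce the statement to Lemma~\ref{lem:deltanu} by exhibiting an appropriate connection coming from the thin structure. First I would unwind the definitions: since $\mathcal{L}$ is a thin fusion extension, its thin structure $d$ is in particular a multiplicative and fusive thin structure, so by Definition~\ref{def:mfthinstructure} there is a fusive superficial connection $\nu$ on $\mathcal{L}$ with $d^{\nu}=d$. A superficial connection is by Definition~\ref{def:thin} in particular thin, so Lemma~\ref{lem:deltanu} applies to $\nu$: the induced splitting $\delta_{\nu}\colon L\mathfrak{g}\to\mathfrak{l}$ is $\diff^{+}(S^1)$-equivariant and satisfies $\delta_{\nu}(\mathcal{X})=\ddt 1t0 d_{\mathcal{X}}(t)$ for every linear loop $\mathcal{X}$. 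Putting $\delta:=\delta_{\nu}$ therefore settles existence together with the displayed formula and the equivariance claim.

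For uniqueness I would use that the linear loops span $L\mathfrak{g}$ \emph{algebraically}: fixing a basis $X_1,\dots,X_k$ of $\mathfrak{g}$, every $\mathcal{X}\in L\mathfrak{g}$ is a finite sum $\mathcal{X}=\sum_{i}f_iX_i$ of linear loops. Hence any two splittings $\delta,\delta'$ that agree with the right-hand side $\ddt 1t0 d_{\mathcal{X}}(t)$ on all linear loops agree on a generating set of the vector space $L\mathfrak{g}$, and being linear they coincide. In particular $\delta$ is independent of the auxiliary choice of fusive superficial connection $\nu$ inducing $d$.

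There is no genuinely hard step here: the substance is entirely contained in Lemma~\ref{lem:deltanu} and in the fact — built into the definition of a thin fusion extension — that $d$ arises from a (superficial, hence thin) connection. The one point needing a little care is that the displayed formula does genuinely pin $\delta$ down: a priori the assignment $\mathcal{X}\mapsto \ddt 1t0 d_{\mathcal{X}}(t)$ is defined only on linear loops and need not be additive there, so one cannot take it as the definition of $\delta$; one must first produce $\delta$ as $\delta_{\nu}$ and only afterwards read off that it satisfies the formula, with uniqueness then following from the spanning property. Superficiality of $\nu$ (rather than mere thinness) plays no role in this particular statement, but invoking it costs nothing since it is already part of the hypothesis.
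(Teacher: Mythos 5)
Your argument is correct and is essentially the paper's own proof: existence and the equivariance come from Lemma \ref{lem:deltanu} applied to a (superficial, hence thin) connection $\nu$ inducing $d$, which a thin fusion extension provides by Definition \ref{def:mfthinstructure}, while uniqueness follows because the linear loops span $L\mathfrak{g}$. Your added remark that the displayed formula cannot serve as a definition of $\delta$ but only pins it down a posteriori is a fair clarification, not a departure from the paper's route.
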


\begin{proof}
Uniqueness follows because the linear loops span $L\mathfrak{g}$. Existence uses the existence of a thin connection $\nu$ on $\mathcal{L}$ such that $d=d^{\nu}$. The corresponding splitting $\delta_{\nu}$ has the claimed properties by Lemma \ref{lem:deltanu}. 
\end{proof}

In the remainder of this section we discuss three examples. 

\begin{example}
\label{ex:wzwmodelcon}
The universal central extension $\wzwmodel$ of a compact, simple, connected, simply-connected Lie group $G$ is a thin fusion extension. Since $\wzwmodel$ is universal, it follows that \emph{every} central extension of $LG$ is a thin fusion extension.  In the model of Example \ref{ex:wzwmodel} we obtain a connection $\nu$ by declaring its parallel transport along a path $\gamma:[0,1] \to LG$ via
\begin{equation}
\label{eq:wzwmodelconnection}
pt_{\gamma}:\wzwmodel|_{\gamma(0)} \to \wzwmodel|_{\gamma(1)}:(\phi_0,z_0)\mapsto (\phi_1,z_0\cdot \mathrm{e}^{2\pi \im S_{\mathrm{WZ}}(\Phi_{\gamma})})\text{,}
\end{equation}
where $\phi_1:D^2 \to G$ is arbitrarily chosen such that $\partial\phi_1=\gamma(1)$, and $\Phi_{\gamma}$ is obtained from $\gamma$, $\phi_0$ and $\phi_1$ exactly as described in the definition of the thin homotopy equivariant structure $d$ on $\wzwmodel$. In order to show that \erf{eq:wzwmodelconnection} defines a connection, it suffices to check (see \cite[Theorem 5.4]{schreiber3}):
\begin{enumerate}[(a)]

\item 
It is compatible with the concatenation of paths: this is obvious.

\item
It depends only on   the thin homotopy class of the path, i.e. if $\gamma$ and $\gamma'$ are paths in $LG$ with common initial loop and common end loop, and $h: [0,1]^2 \to LG$ is a smooth map with $h(0,t)=\gamma(t)$ and $h(1,t)=\gamma'(t)$ (i.e. $h$ is a fixed-ends-homotopy) and with the property that $\int h^{*}\omega = 0$ for all 2-forms $\omega\in \Omega^2(LG)$ (i.e. $h$ is thin), then $pt_{\gamma_1}=pt_{\gamma_2}$. That this is the case can be seen by expressing the difference of the parallel transport maps  \erf{eq:wzwmodelconnection} as the integral of $\omega := \tau_{S^1}(H)$ along $h$, which thus vanishes. 

\item
It depends smoothly on the path. This can be checked on smooth one-parameter family of paths, for which smoothness follows from the one of the integral of differential forms. 

\end{enumerate}
Thus, we have a connection $\nu$ on $\wzwmodel$. It is straightforward to see that it is  compatible with the fusion product $\lambda$. 
We have already seen in Example \ref{ex:wzwmodel} that for a thin path $\gamma$ the parallel transport $pt_{\gamma}$   is independent of the choice of the thin path: this shows that $\nu$ is thin and induces $d$.
It is also superficial: if two loops $\tau_1,\tau_2\in LLG$ are homotopic via a homotopy $h$, then the difference between their holonomies is given by $\exp 2\pi \im \int_hH$, where $h$ is the homotopy. When $h$ has rank two, the difference vanishes. The curvature of $\nu$ is $\tau_{S^1}(H)$, and the error 1-form is $\varepsilon_{\nu} = \tau_{S^1}(\rho)$. Example \ref{ex:pathsplitting} shows that $\varepsilon_{\nu}$ has a multiplicative and contractible path splitting. \end{example}

\begin{example}
\label{ex:thinfusionextriv}
For any Lie group $G$, the central extension $\mathcal{L}_P=\ueins \times LG$ with the group structure defined from the holonomy $\eta$ of a principal $\ueins$-bundle $P$ over $G \times G$, the trivial fusion product and the trivial thin structure (see Example \ref{ex:trivial}), is a thin fusion extension. Indeed, an integrating connection $\nu$ can be obtained from any 2-form $\omega\in \Omega^2(G)$ by $\nu :=\tau_{S^1}(\omega)\in\Omega^1(LG)$, e.g. $\omega=0$ works. It is easy to see that this gives a fusive superficial connection, and  that it induces the trivial thin structure, see \cite[Proposition 3.1.8]{waldorf11}.  
\end{example}

\begin{example}
\label{ex:notthinfusion}
We construct a central extension that cannot be equipped with the structure of a thin fusion extension. 
\begin{comment}
It is clear that $G$ cannot be taken to be compact, simple, connected and simply-connected, since in that case there is a universal central extension which is a thin fusion extension; it follows then that all central extensions of $LG$ are thin fusion extensions. 
\end{comment}
We work with $G=\ueins$, and consider $\mathcal{L}=\ueins \times L\ueins$ equipped with the group structure induced by the following 2-cocycle $\eta: L\ueins \times L\ueins \to  \ueins$. If $\tau \in L\ueins$, we denote by $n\in \Z$ the winding number of $\tau$, and find a smooth map $f:\R \to \R$ such that $f(t+1)=f(t)+n$ and $\tau=\mathrm{e}^{2\pi \im f}$. Note that $f$ is determined by $\tau$ up to a shift by a constant $z\in \Z$. We define for $f:\R \to \R$ the average
\begin{equation*}
\widehat f := \int_0^1 f(s)\mathrm{d}s\text{.}
\end{equation*}
We define for $\alpha,\beta,\gamma\in \R$:
\begin{equation*}
\eta(\tau_1,\tau_2)=\exp 2\pi \im \left ( \alpha \int_0^1 f_1(s)f_2'(s)\mathrm{d}s + \beta (n_1 \widehat f_2 + n_2 \widehat f_1)+\gamma n_1f_2(0) \right )\text{.} 
\end{equation*}
It is straightforward to check that the cocycle condition is satisfied for arbitrary values of $\alpha,\beta,\gamma$. 
\begin{comment}
Note that $\tau_1\tau_2=\exp 2\pi\im (f_{12})$ for $f_{12}=f_1+f_2$ and the winding number of $\tau_1\tau_2$ is $n_{12}=n_1+n_2$. The exponent of $\eta(\tau_1,\tau_2\tau_3)\eta(\tau_2,\tau_3)$ is
\begin{eqnarray*}
&& \hspace{-5em}\alpha \int_0^1 f_1(s)(f_2'(s)+f_3'(s))+f_2(s)f_3'(s)\mathrm{d}s \\ &&+ \beta(n_1 \widehat{f_2+f_3}+(n_2+n_3)\widehat f_1+n_2\widehat f_3 + n_3\widehat f_2)+\gamma( n_1(f_2(0)+f_3(0)) n_2f_3(0))
\end{eqnarray*}
and the exponent of $\eta(\tau_1\tau_2,\tau_3)\eta(\tau_1,\tau_2)$ is
\begin{eqnarray*}
&&\hspace{-5em} \alpha \int_0^1 (f_1(s)+f_2(s))f_3'(s)+f_1(s)f_2'(s) \mathrm{d}s
\\&& + \beta((n_1+n_2)\widehat f_3 + n_3(\widehat {f_1+f_2})+n_1 \widehat f_2 + n_2 \widehat f_1)+\gamma((n_1+n_2)f_3(0)+n_1f_2(0))\text{.}
\end{eqnarray*}
Since $\widehat{f_1+f_2}= \widehat f_1+\widehat f_2$, these exponents cancel on the nose, before exponentiation. 
\end{comment}
We remark that the 2-cocycle $\eta$ is normalized, i.e. $\eta(1,1)=1$, for all parameters.
We have to assure that $\eta$ is well-defined under shifting $f_k$ by integers $z_k$. We get
\begin{eqnarray*}
 \int_0^1 (f_1(s)+z_1)(f_2+z_2)'(s)\mathrm{d}s &=&  \int_0^1 f_1(s)f_2'(s)\mathrm{d}s+z_1n_2 \\
n_1 \widehat {f_2+z_2} + n_2 \widehat {f_1 + z_1} &=& n_1\widehat f_2 + n_2 \widehat f_1+n_1 z_2+ n_2 z_1
\\
n_1(f_2+z_2)(0)&=& n_1f_2(0)+n_1z_2\text{.}
\end{eqnarray*}
Note that all differences that arise are integers. We see two options to obtained well-definedness:
\begin{enumerate}

\item 
We choose the constants $\alpha,\beta,\gamma$ such that all differences cancel: $\beta=-\gamma$ and $\alpha=\gamma$. Then we have an $\R$-family of well-defined 2-cocycles. The corresponding central extensions are denoted by $\mathcal{L}_{\R}(\gamma)$. 

\item
We let $\alpha,\beta,\gamma\in \Z$ be arbitrary integers. Then, all differences vanish separately under exponentiation. This gives a $\Z^3$-family of well-defined 2-cocycles. The corresponding central extensions are denoted by  $\mathcal{L}_{\Z}(\alpha,\beta,\gamma)$. 

\end{enumerate}
We have coincidence $\mathcal{L}_{\R}(k)=\mathcal{L}_{\Z}(k,-k,k)$ for all $k\in \Z$. We observe that $\mathcal{L}_{\Z}(-1,0,1)$ is the extension $\mathcal{L}_P$ of Examples \ref{ex:trivial} and \ref{ex:thinfusionextriv}, with $P$ the Poincaré bundle over $T=\ueins\times \ueins$. Further,  $\mathcal{L}_{\Z}(1,1,-1)$ is the \quot{basic central extension} of $L\ueins$ \cite[Prop. 4.7.5]{pressley1}.

One can show that for all $\tau_1,\tau_2$, and all $\alpha,\beta,\gamma\in\R$ the following symmetry law holds: 
\begin{equation*}
\eta(\tau_1,\tau_2)= \exp 2\pi \im \left ( 2\alpha \int_0^1 f_1(s)f_2'(s)\mathrm{d}s-\alpha n_2n_1+(\gamma-\alpha) n_1f_2(0)-(\gamma+\alpha ) n_2f_1(0)) \right ) \eta(\tau_2,\tau_1)\text{.}
\end{equation*}
\begin{comment}
We compute
\begin{eqnarray*}
\eta(\tau_1,\tau_2)\eta(\tau_2,\tau_1)^{-1}&=&\exp 2\pi \im \left ( \alpha \int_0^1 (f_1(s)f_2'(s)-f_2(s)f_1'(s))\mathrm{d}s +\gamma (n_1f_2(0)-n_2f_1(0)) \right )
\\ &=&\exp 2\pi \im \left ( 2\alpha \int_0^1 f_1(s)f_2'(s)\mathrm{d}s-\alpha f_2(1)f_1(1)+\alpha f_1(0)f_2(0) +\gamma (n_1f_2(0)-n_2f_1(0)) \right )
\\ &=&\exp 2\pi \im \left ( 2\alpha \int_0^1 f_1(s)f_2'(s)\mathrm{d}s-\alpha n_2n_1+(\gamma-\alpha) n_1f_2(0)-(\gamma+\alpha ) n_2f_1(0)) \right )
\end{eqnarray*}
\end{comment}
This shows in the first place that the extensions $\mathcal{L}_{\Z}(0,\beta,0)$ are commutative. 
Let $\phi: [0,1] \to [0,1]$ be a smoothing map. Define smooth maps $f_1,f_2:[0,1] \to \R$ by
\begin{equation*}
f_1(t):= \begin{cases}
\phi(2t) & 0\leq t \leq \frac{1}{2} \\
0 & \frac{1}{2}\leq t \leq 1 \\
\end{cases}
\quand
f_2(t):= \begin{cases}
1 & 0\leq t \leq \frac{1}{2} \\
\phi(2t-1) & \frac{1}{2}\leq t \leq 1 \\
\end{cases}
\end{equation*} 
and extend them periodically with shifts by $n_1=n_2=-1$. The corresponding loops $\tau_1$ and $\tau_2$ have disjoint support.
We have $f_1f_2'=0$, $f_1(0)=f_2(0)=1$, and hence
\begin{equation*}
\eta(\tau_1,\tau_2)=\exp (2\pi \im \alpha) \cdot \eta(\tau_2,\tau_1) \text{.}
\end{equation*}
This shows that the central extensions $\mathcal{L}_{\R}(\gamma)$ with $\gamma\notin \Z$ do not satisfy the disjoint commutativity law of Theorem \ref{th:localcomm}, and we conclude that they cannot be equipped with the structure of thin fusion extensions. 
\begin{comment}
As $LS^1$ is commutative, all 2-cocycles that are coboundaries are symmetric.
\begin{comment}
Indeed, $\Delta\varepsilon(\tau_1,\tau_2)=\varepsilon(\tau_1)\varepsilon(\tau_2)\varepsilon(\tau_1\tau_2)^{-1}=\Delta\varepsilon(\tau_2,\tau_1)$.
This can be used in order to exclude that two 2-cocycles are cohomologous. Indeed, two 2-cocycles $\eta_1,\eta_2$ are cohomologous only if $\eta_1(\tau_1,\tau_2)\eta_1(\tau_2,\tau_1)^{-1}=\eta_2(\tau_1,\tau_2)\eta_2(\tau_2,\tau_1)^{-1}$. For $\eta_1 =\eta_{\alpha_1,\beta_1,\gamma_1}$ and $\eta_2 =\eta_{\alpha_2,\beta_2,\gamma_2}$ this holds if and only if
\begin{equation*}
 2(\alpha-\alpha') \int_0^1 f_1(s)f_2'(s)\mathrm{d}s-(\alpha-\alpha') n_2n_1+(\gamma-\gamma'-\alpha+\alpha') n_1f_2(0)-(\gamma-\gamma'+\alpha-\alpha' ) n_2f_1(0)) \in \Z\text{.}
\end{equation*}
If $\alpha\neq \alpha'$ or $\gamma\neq \gamma'$ it is easy to produce loops $\tau_1,\tau_2$ such that this quantity is not integral. Thus, $\gamma\neq \gamma'$ implies $\mathcal{L}_{\R}(\gamma) \ncong \mathcal{L}_{\R}(\gamma')$, and $\alpha\neq\alpha$ or $\gamma\neq\gamma'$ implies $\mathcal{L}_{\Z}(\alpha,\beta,\gamma)\ncong \mathcal{L}_{\Z}(\alpha',\beta',\gamma')$. 
\end{comment}
\end{example}

\setsecnumdepth{2}

\section{Transgression-regression machine}

\label{sec:transgression}

\subsection{Multiplicative bundle gerbes}

We  use the theory of bundle gerbes and connections on those \cite{murray,stevenson1,carey2,waldorf1}. We  denote by $\ugrb X$ and $\ugrbcon X$ the  bicategories of bundle gerbes without and  with connection over a smooth manifold $X$, respectively. Forgetting the connection is an essentially surjective, and in general neither full nor faithful functor
$\ugrbcon X \to \ugrb X$. 
The 1-morphisms are called (connection-preserving) \emph{isomorphisms}, and the 2-morphisms are called (connection-preserving)  \emph{transformations}. 2-forms $\rho\in \Omega^2(X)$ can be considered as connections on the trivial bundle gerbe $\mathcal{I}$; as a bundle gerbe with connection it is denoted by  $\mathcal{I}_{\rho}$. 

\begin{definition}[{{\cite{carey4,waldorf5}}}]
A \emph{multiplicative bundle gerbe with connection} over a Lie group $G$ is a bundle gerbe $\mathcal{G}$ with connection over $G$, a multiplicative 2-form $\rho\in \Omega^2(G \times G)$,
a connection-preserving isomorphism
\begin{equation*}
\mathcal{M}:\mathcal{G}_1 \otimes \mathcal{G}_2 \to \mathcal{G}_{12} \otimes \mathcal{I}_{\rho}
\end{equation*}
over $G \times G$, and a connection-preserving transformation
\begin{equation}
\label{eq:alpha}
\alxydim{@R=\xyst@C=4.5em}{\mathcal{G}_1 \otimes \mathcal{G}_2 \otimes \mathcal{G}_3 \ar[r]^-{\mathcal{M}_{1,2} \otimes \id} \ar[d]_{\id \otimes \mathcal{M}_{2,3}} & \mathcal{G}_{12} \otimes \mathcal{G}_3 \otimes \mathcal{I}_{\rho_{1,2}} \ar[d]^{\mathcal{M}_{12,3} \otimes \id} \ar@{=>}[dl]|*+{\alpha} \\ \mathcal{G}_1 \otimes \mathcal{G}_{23} \otimes \mathcal{I}_{\rho_{2,3}} \ar[r]_-{\mathcal{M}_{1,23} \otimes \id} & \mathcal{G}_{123} \otimes \mathcal{I}_{\rho_{\Delta}}}
\end{equation}
between isomorphisms over $G\times G\times G$, such that $\alpha$ satisfies a pentagon axiom over $G^4$. 
\end{definition}

Here our  index convention is so that e.g. the index $(..)_{ij,k}$ stands for the pullback along the map $(g_i,g_j,g_k) \mapsto (g_ig_j,g_k)$. For instance, $\mathcal{G}_i = \pr_i^{*}\mathcal{G}$ and $\mathcal{G}_{12}$ is the pullback along the multiplication of $G$. Further, we have written $\rho_{\Delta} := \rho_{1,2} + \rho_{12,3} = \rho_{2,3} + \rho_{1,23}$, with the equality coming from the multiplicativity of $\rho$, see \erf{eq:multiplicative}. The pentagon axiom for $\alpha$ can be found  in \cite[Definition 1.3]{waldorf5}.
 For later purpose, we need the following lemma.

\begin{lemma}
\label{lem:canalpha}
Suppose $G$ is connected. Let $\mathcal{G}$ be a bundle gerbe with connection over $G$, $\rho\in \Omega^2(G \times G)$ be a multiplicative 2-form, and  $\mathcal{M}: \mathcal{G}_1 \otimes \mathcal{G}_2 \to \mathcal{G}_{12} \otimes \mathcal{I}_{\rho}$ be a connection-preserving isomorphism, such that the set $\inf X$ of  connection-preserving transformations \erf{eq:alpha}
is non-empty.
Then, $\inf X$ contains  a unique element $\alpha$ that satisfies the pentagon axiom. 
\end{lemma}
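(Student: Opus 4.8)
The plan is to exploit the fact that any two connection-preserving transformations $\alpha, \alpha'$ filling the square \erf{eq:alpha} differ by an automorphism of the relevant 1-morphism, i.e. by a smooth $\ueins$-valued function on $G \times G \times G$, and then to show that the pentagon axiom pins this function down uniquely. Concretely, first I would observe that the 2-morphisms between a fixed pair of connection-preserving isomorphisms of bundle gerbes with connection form a torsor over $\h^0(G^3,\ueins) = C^\infty(G^3,\ueins)$ once the set is non-empty (this is a standard fact about the bicategory $\ugrbcon{\cdot}$, since connection-preserving transformations are flat sections of a hermitian line bundle, hence locally constant fiberwise, and globally a map into $\ueins$). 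Thus $\inf X$ is a torsor over $C^\infty(G^3,\ueins)$.

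Next, I would analyze how the pentagon axiom constrains $\alpha$. Given one element $\alpha_0 \in \inf X$, every other element is $\alpha = c \cdot \alpha_0$ for a unique $c \in C^\infty(G^4,\ueins)$ — wait, $c \in C^\infty(G^3,\ueins)$. The pentagon axiom is an equation between two composites of (pullbacks of) $\alpha$ over $G^4$. Writing it out, the pentagon for $\alpha = c\cdot \alpha_0$ becomes the pentagon for $\alpha_0$ multiplied by the alternating combination $\Delta c \in C^\infty(G^4,\ueins)$ of the four pullbacks of $c$ along the face maps $G^4 \to G^3$ (the simplicial differential appearing in \erf{eq:multiplicative}, now with $\ueins$ coefficients). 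Hence $\alpha_0$ fails the pentagon by a definite cocycle $\beta_0 \in C^\infty(G^4,\ueins)$ which is automatically a $4$-cocycle (closed under the next differential, by the usual higher-coherence argument: the two ways of composing five copies of $\alpha$ over $G^5$ agree, forcing $\Delta\beta_0 = 1$), and $\alpha = c\cdot\alpha_0$ satisfies the pentagon if and only if $\Delta c = \beta_0^{-1}$. So existence and uniqueness of the corrected $\alpha$ amount to: $\beta_0$ is a coboundary, and $\Delta$ is injective on $C^\infty(G^3,\ueins)$ — equivalently, $\h^3$ and $\h^4$ of the complex \erf{eq:multiplicative} with $\ueins$ coefficients and $k=0$, i.e. the smooth group cohomology $\h^3_{sm}(G,\ueins)$ and $\h^4_{sm}(G,\ueins)$... but that is too strong in general.

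The point where connectedness enters, and what I expect to be the main obstacle, is that one does not need full vanishing of those group-cohomology groups: $\beta_0$ is not an arbitrary cocycle but a \emph{differential} datum — it arises as the discrepancy of connection-preserving transformations, hence it is \quot{topologically trivial} in a suitable sense. The correct statement is that $\beta_0$ lies in the image of the exponential map $\Omega^4_{closed}$-data, and more precisely that it is exact because the corresponding curving/curvature equation already holds (the existence of \emph{some} $\alpha_0$ forces the relevant $4$-form identity, namely $\Delta$ applied to the curving data vanishes). Here one uses that $G$ is connected so that a smooth map $G^4 \to \ueins$ that is a coboundary \emph{after} passing to the universal cover, with integral periods, is already a coboundary; equivalently, one lifts $\beta_0$ to an $\R$-valued cochain using connectedness and simple-connectedness of $\R$, solves the linear (real) equation $\Delta \tilde c = \tilde\beta_0$ degree by degree — which is possible because the real group-cohomology differential is exact in the relevant range once we are allowed smooth cochains and $G$ is connected, cf.\ the diagram-chase argument already used in the (commented-out) lemma on multiplicative path splittings — and exponentiates. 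I would cite \cite{waldorf5} for the precise form of the pentagon and for the torsor structure, and structure the write-up as: (1) $\inf X$ is a $C^\infty(G^3,\ueins)$-torsor; (2) the pentagon-defect $\beta_0$ is a smooth $4$-cocycle; (3) using connectedness, $\beta_0 = \Delta c^{-1}$ for some $c$, giving existence; (4) if $\alpha,\alpha'$ both satisfy the pentagon then $\Delta(\alpha'/\alpha) = 1$, and connectedness (via the same real-lift argument, now in the injectivity direction: a closed smooth $3$-cochain that is $\Delta$-closed and arises this way is itself a $\Delta$ of nothing only if it is trivial) forces $\alpha' = \alpha$, giving uniqueness.
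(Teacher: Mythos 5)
There is a genuine gap, and it starts at your very first step: the set of connection-preserving transformations between two fixed connection-preserving isomorphisms is \emph{not} a torsor over $C^{\infty}(G^3,\ueins)$ but over the group of \emph{locally constant} $\ueins$-valued maps, because connection-preserving transformations differ by flat (hence locally constant) functions. This is exactly where the hypothesis that $G$ is connected does its work in the paper: any two elements of $\inf X$ differ by a locally constant map $G^3\to\ueins$, i.e. by a single constant $\varepsilon\in\ueins$, and the pentagon defect of a chosen $\alpha_0$ is likewise a constant in $\ueins$. The rest is then elementary counting, with no cohomology at all: the pentagon identity contains three occurrences of $\alpha$ on one side and two on the other, so replacing $\alpha_0$ by $\alpha_0\cdot\varepsilon$ changes the defect by exactly one net factor of $\varepsilon$, which lets you absorb any constant defect (existence); and if $\alpha,\alpha'$ both satisfy the pentagon and differ by a constant $\varepsilon$, comparing the two pentagons gives $\varepsilon^{3}=\varepsilon^{2}$, hence $\varepsilon=1$ (uniqueness).

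By contrast, your route needs to solve $\Delta c=\beta_0^{-1}$ in smooth group cochains and needs injectivity of $\Delta$ on $C^{\infty}(G^3,\ueins)$, and neither holds: $\Delta c=1$ only says $c$ is a cocycle, so your uniqueness step fails as stated, and the existence step would require vanishing of smooth group cohomology of $G$ with $\ueins$ (or $\R$) coefficients in degrees $3$ and $4$, which you yourself flag as ``too strong in general.'' The proposed repair --- lifting to $\R$-valued cochains and invoking exactness of the smooth real group-cohomology differential ``in the relevant range'' for connected $G$ --- is not established and is false in general: $G$ is an arbitrary connected Lie group here (not assumed compact), and these cohomology groups do not vanish in general. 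Moreover, the pentagon is a $2$-categorical pasting equation with an unbalanced count of occurrences ($3$ versus $2$), so its defect does not even organize itself as a simplicial coboundary $\Delta c$ in the way your plan assumes. The fix is to replace step (1) by the correct torsor statement over locally constant maps; then connectedness collapses everything to constants and the whole cohomological machinery becomes unnecessary.
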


\begin{proof}
We pick some $\alpha \in \inf X$. The pentagon axiom is an equality between two connection-preserving transformations over $G^4$. Set of connection-preserving transformations between two fixed connection-preserving isomorphisms is a torsor over the group of locally constant $\ueins$-valued maps.  Thus, the pentagon axiom for $\alpha$ is satisfied up to a locally constant map $\varepsilon\maps G^4 \to \ueins$.  Since $G$ is connected, $\varepsilon$ is constant; $\varepsilon\in \ueins$. We  regard this constant as a locally constant map  $\varepsilon:G^3 \to \ueins$, and define  a new element $\alpha' := \alpha \cdot \varepsilon \in \inf X$. The pentagon axiom has five occurrences of $\alpha'$: three on one side and two on the other. Thus, four occurrences of $\varepsilon$ cancel, and the remaining one compensates the error caused by $\alpha$; hence,  $\alpha'$ satisfies the pentagon axiom. Assume  $\alpha$, $\alpha' \in \inf X$  satisfy the pentagon axiom. They differ by a locally constant map $\varepsilon: G^3 \to \ueins$, i.e. a constant. In the pentagon axioms for $\alpha$ and $\alpha'$, this leads to $\varepsilon^3=\varepsilon^2$, i.e. $\varepsilon=1$. Therefore, $\alpha=\alpha'$. 
\end{proof}

If $(\mathcal{G},\rho,\mathcal{M},\alpha)$ and $(\mathcal{G}',\rho,\mathcal{M}',\alpha')$ are multiplicative bundle gerbes over $G$ with connections (with the same 2-form $\rho$),
a \emph{1-morphism} is a connection-preserving isomorphism $\mathcal{A}:\mathcal{G} \to \mathcal{G}'$ together with a connection-preserving  transformation
\begin{equation}
\label{eq:beta}
\alxydim{@R=\xyst@C=2cm}{\mathcal{G}_1 \otimes \mathcal{G}_2 \ar[r]^-{\mathcal{M}} \ar[d]_{\mathcal{A}_{1} \otimes \mathcal{A}_2} & \mathcal{G}_{12} \otimes \mathcal{I}_{\rho} \ar@{=>}[dl]|*+{\beta} \ar[d]^{\mathcal{A}_{12}  \otimes \id} \\ \mathcal{G}_1' \otimes \mathcal{G}_2' \ar[r]_-{\mathcal{M}'} & \mathcal{G}_{12}' \otimes \mathcal{I}_{\rho'}}
\end{equation}
over $G \times\ G$  that satisfies a compatibility condition with respect to  $\alpha$ and $\alpha'$ over $G^3$, see \cite[Definition 1.7]{waldorf5}. 

\begin{lemma}
\label{lem:canbeta}
Suppose $(\mathcal{G},\rho,\mathcal{M},\alpha)$ and $(\mathcal{G}',\rho,\mathcal{M}',\alpha')$ are multiplicative bundle gerbes with connection over a connected Lie group $G$,  suppose $\mathcal{A}: \mathcal{G} \to \mathcal{G}'$ is a connection-preserving isomorphism, and suppose $\beta$ is a connection-preserving transformation \erf{eq:beta}. Then, $\beta$ is  compatible with $\alpha$ and $\alpha'$.
\end{lemma}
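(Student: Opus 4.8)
The plan is to mimic the proof of Lemma~\ref{lem:canalpha}, exploiting that the compatibility condition for $\beta$ is an equation between two connection-preserving transformations of connection-preserving isomorphisms over $G^3$, and that such transformations form a torsor over the group of locally constant $\ueins$-valued maps on $G^3$, which --- since $G$ is connected --- is just $\ueins$.

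First I would recall the compatibility condition from \cite[Definition 1.7]{waldorf5}: it is a pasting equality of 2-morphisms over $G^3$ built from $\beta$ (pulled back along the various face maps), $\mathcal{M}$, $\mathcal{M}'$, $\alpha$ and $\alpha'$. In this equality $\beta$ occurs a certain number of times on each side; the key combinatorial observation is that it occurs with a \emph{different} total multiplicity on the two sides --- analogously to the pentagon axiom in Lemma~\ref{lem:canalpha}, where $\alpha$ appears three times versus two. Next, starting from the given $\beta$, I would note that the compatibility condition holds up to a locally constant error $\varepsilon\maps G^3 \to \ueins$; since $G$ (hence $G^3$) is connected, $\varepsilon$ is a constant in $\ueins$. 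Regarding $\varepsilon$ as a locally constant map $G\times G \to \ueins$, I would replace $\beta$ by $\beta' := \beta\cdot\varepsilon^{c}$ for the appropriate integer exponent $c$ determined by the difference of multiplicities, so that the errors cancel and $\beta'$ satisfies the compatibility condition.

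The point that turns this into the stated result --- namely that the \emph{given} $\beta$ already works, not merely some modification of it --- is the following rigidity argument, again parallel to Lemma~\ref{lem:canalpha}: any two transformations $\beta,\beta'$ of the form \erf{eq:beta} differ by a locally constant map $G\times G \to \ueins$, i.e. a constant $\delta\in\ueins$. Plugging $\beta' = \beta\cdot\delta$ into the compatibility condition and using that $\beta$ and $\beta'$ both satisfy it forces $\delta^{m}=\delta^{n}$ for the two distinct multiplicities $m\neq n$, hence $\delta=1$ and $\beta'=\beta$. Therefore the compatibility condition is automatically satisfied by the unique $\beta$ of the given type --- wait, $\beta$ of the given type over a connected $G$ is unique up to the constant $\delta$, and exactly one value of $\delta$ makes it compatible, but the rigidity argument shows that value is forced, so \emph{every} such $\beta$ is compatible.

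The main obstacle I expect is purely bookkeeping: one must read off from \cite[Definition 1.7]{waldorf5} the precise diagram and in particular count the multiplicities with which $\beta$ (and its pullbacks along $(g_1,g_2,g_3)\mapsto(g_1,g_2)$, $(g_1g_2,g_3)$, $(g_2,g_3)$, $(g_1,g_2g_3)$) enter the two sides of the compatibility equation, and verify $m\neq n$. If one side has $\beta$ appearing three times and the other twice, the argument goes through verbatim as above; I would expect this to be the case by analogy with the pentagon, but confirming it requires consulting the cited definition. Everything else --- connectedness forcing locally constant maps to be constant, and the torsor structure of transformations over $\ueins$ --- is already used in the proof of Lemma~\ref{lem:canalpha} and can be reused without change.
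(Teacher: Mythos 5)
There is a genuine gap, and it sits exactly at the point you flagged as ``bookkeeping.'' In the compatibility condition of \cite[Definition 1.7]{waldorf5} the transformation $\beta$ does \emph{not} occur with different multiplicities on the two sides: one side contains $\beta_{1,2}$ and $\beta_{12,3}$, the other $\beta_{2,3}$ and $\beta_{1,23}$, i.e.\ two occurrences on each side (the paper's proof counts ``four occurrences of $\beta$'' in total). Consequently, replacing $\beta$ by $\beta\cdot\delta$ for a constant $\delta\in\ueins$ multiplies both sides by $\delta^{2}$ and leaves the error $\varepsilon$ completely unchanged; there is no exponent $c$ for which $\beta':=\beta\cdot\varepsilon^{c}$ repairs the compatibility, and your rigidity step degenerates to the vacuous identity $\delta^{2}=\delta^{2}$. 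So the mechanism that works for the pentagon in Lemma \ref{lem:canalpha} (three occurrences versus two) is simply not available here. Note also that even if the multiplicities had been unequal, your argument would only yield existence and uniqueness of \emph{some} compatible transformation (as in Lemma \ref{lem:canalpha}), not the stated claim that the \emph{given} $\beta$ is compatible; the "so every such $\beta$ is compatible" step does not follow from the uniqueness computation.

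What is missing is the extra input the paper actually uses: the pentagon axioms for $\alpha$ and $\alpha'$ over $G^{4}$. Since the error $\varepsilon$ is a constant and rescaling $\beta$ cannot change it, compatibility holds either for every connection-preserving $\beta$ of the form \erf{eq:beta} or for none; to exclude the latter one compares the two pentagons through the compatibility squares (the paper counts $20=4\cdot5$ occurrences of $\beta$ in this comparison) and uses that both pentagons hold exactly, which forces an unbalanced number of copies of $\varepsilon$ to cancel and hence $\varepsilon=1$. Your proposal never invokes the pentagons for $\alpha$ and $\alpha'$, so it cannot close this step.
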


\begin{proof}
We argue as in the proof of Lemma \ref{lem:canalpha}. The compatibility condition is an equality between two transformations over $G^3$, with four occurrences of $\beta$. Thus, it is satisfied up to a locally constant map $\varepsilon:G^3 \to \ueins$, i.e. a constant. The two pentagon axioms for $\alpha$ and $\alpha'$ over $G^4$ are related by $20=4\cdot 5$ occurrences of $\beta$. As the pentagon axioms are satisfied, and the compatibility diagrams commute up to $\varepsilon$, we obtain that five occurrences of $\varepsilon$ have to cancel. This requires $\varepsilon=1$.\end{proof}

If $(\mathcal{A},\beta)$ and $(\mathcal{A}',\beta')$ are 1-morphisms between multiplicative bundle gerbes with connection, a \emph{2-morphism} is a connection-preserving transformation $\varphi:\mathcal{A} \Rightarrow \mathcal{A}'$ such that the diagram
\begin{equation*}
\alxydim{@R=\xyst@C=2cm}{(\mathcal{A}_1 \otimes \mathcal{A}_2) \circ \mathcal{M} \ar@{=>}[r]^-{\beta} \ar@{=>}[d]_{(\varphi_1 \otimes \varphi_2) \circ \id} & \mathcal{M}' \circ \mathcal{A}_{12} \ar@{=>}[d]^{\id\circ \varphi_{12}} \\ (\mathcal{A}'_1 \otimes \mathcal{A}'_2) \circ \mathcal{M} \ar@{=>}[r]_-{\beta'} & \mathcal{M}' \circ \mathcal{A}_{12}'}
\end{equation*}
is commutative.
With these definitions, multiplicative bundle gerbes with connection form a  bicategory that we denote by $\multgrbcon G$.

Multiplicative bundles gerbes \emph{without} connections are defined analogously, without the 2-form $\rho$ and without occurrences of trivial gerbes.
 We denote by $\multgrb G$ the bicategory of multiplicative bundle gerbes over $G$. We have the following classification result of \cite{carey4}:
\begin{equation}
\label{eq:multgrbclass}
\hc 0 \multgrb G \cong \h^4(BG,\Z)\text{,}
\end{equation}
where $\hc 0$ denotes taking the set of isomorphic objects.
We denote by $\multgrbadcon G$ the full sub-bicategory of $\multgrb G$ over those multiplicative bundle gerbes that \emph{admit} connections. For compact Lie groups  $G$, we have $\multgrbadcon G = \multgrb G$ \cite[Proposition 2.8]{waldorf5}. All bicategories of multiplicative bundle gerbes are symmetric monoidal, under the tensor product of bundle gerbes, and \erf{eq:multgrbclass} is an isomorphism between groups.

\begin{example}
\label{ex:trivialgerbe}
The trivial gerbe $\mathcal{I}_{\omega}$ for any $\omega\in\Omega^2(G)$ carries  multiplicative structures parameterized by principal  $\ueins$-bundles $P$ with connection over $G \times G$ such that
\begin{equation*}
P_{1,2}\otimes P_{12,3} \cong P_{2,3}\otimes P_{1,23}
\end{equation*}
via a coherent connection-preserving isomorphism, see \cite[Example 1.4]{waldorf5}. In this case, the 2-form is $\rho \eq \mathrm{curv}(P)-\omega$. For $G=\ueins$, we have $\h^4(B\ueins,\Z)=\h^4(K(\Z,2),\Z)=\Z$. In \cite[Prop. 2.4]{waldorf5} it is shown that there is an exact sequence
\begin{equation*}
0 \to \h^3(B\ueins,U(1)) \to \hc 0 \multgrbcon {{\ueins}} \to \Z\text{.}
\end{equation*} 
\begin{comment}
Instead of $\Z$ there is $M_{\Z}^3(S^1)$, which is a subset of $M_{\R}^3(S^1) = \Omega^2_{\Delta}(S^1 \times S^1)$. The element there is represented by $\rho$. The subset is composed of those forms that are integral under the simplicial de Rham isomorphism from $\h^4(BS^1,\Z)=\Z$. \end{comment}
As $\h^3(B\ueins,U(1))=\h^3(K(\Z,2),\ueins)=0$, we see that $\hc 0 \multgrbcon {{\ueins}}\cong\hc 0 \multgrb {{\ueins}}\cong \Z$. This $\Z$-family of multiplicative gerbes is obtained by taking $\omega=0$ and $P$ the Poincaré bundle over $T= \ueins \times \ueins$.
\end{example}

\begin{example}
\label{ex:basicgerbe}
Suppose $G$ is compact, simple  and simply-connected. There exists a (up to connection-preserving isomorphisms) unique bundle gerbe $\gbas$ with connection of curvature $H$, where $H\in \Omega^3(G)$ is the 3-form of Example \ref{ex:wzwmodel}; it is called the \emph{basic gerbe}. One can show that $\gbas$ has a unique multiplicative structure \cite[Example 1.5]{waldorf5}, where $\rho\in\Omega^2(G \times G)$ is the 2-form  of Example \ref{ex:wzwmodel}.  There exist  Lie-theoretical constructions of $\gbas$  \cite{gawedzki1,meinrenken1}. Constructions of the corresponding multiplicative structures are notoriously difficult; one option is described in \cite[Section 7]{Waldorf}.
For (non-simply connected) compact simple Lie groups, all multiplicative gerbes with connection are tabulated, and can be constructed via descent from their simply-connected covers \cite{gawedzki9}.  
\end{example}

\subsection{Transgressive central extensions}

For every smooth manifold $X$, there is a transgression functor
\begin{equation}
\label{eq:tr}
\tr: \hc 1 \ugrbcon X \to \ubun{LX}
\end{equation} 
with target the category of Fréchet principal $\ueins$-bundles over  $LX$. The symbol  $\hc 1$ stands for passing from a bicategory to a category by identifying 2-isomorphic isomorphisms.

Transgression for gerbes has  been defined by Gaw\c edzki  in terms  of cocycles for Deligne cohomology \cite{gawedzki3}, and by Gaw\c edzki-Reis  for bundle gerbes \cite{gawedzki1}.  Brylinski   has defined transgression in terms of sheaves of categories \cite{brylinski1}.  The functor  \erf{eq:tr} that we use here is an adaption of Brylinski's functor to bundle gerbes, and  defined in \cite{waldorf5}. It is monoidal, and natural with respect to smooth maps $f\maps X \to X'$ between smooth manifolds and the induced maps $Lf\maps LX \to LX'$ between their loop spaces. Furthermore, if $\rho\in \Omega^2(X)$, $\mathcal{I}_{\rho}$ is the trivial bundle gerbe with connection $\rho$, then its transgression $\tr_{\mathcal{I}_{\rho}}$ has a canonical trivialization $t_{\rho}:\tr_{\mathcal{I}_{\rho}} \to \trivlin$, where $\trivlin$ is the trivial $\ueins$-bundle over $LX$.

Suppose $(\mathcal{G},\rho,\mathcal{M},\alpha)$ is a multiplicative bundle gerbe with connection over $G$. 
Applying the transgression functor to  $\mathcal{G}$, we obtain a Fréchet principal $\ueins$-bundle $\mathcal{L} :=\tr_{\mathcal{G}}$ over  $LG$. Because transgression is functorial and monoidal, the transgression of the connection-preserving isomorphism $\mathcal{M}$ together with the trivialization $t_{\rho}$ give a bundle isomorphism 
\begin{equation*}
\alxydim{@C=1.6cm}{\mathcal{L}_1 \otimes \mathcal{L}_2 \ar[r]^-{\tr_{\mathcal{M}}} & \mathcal{L}_{12} \otimes \tr_{\mathcal{I}_{\rho}} \ar[r]^-{\id \otimes t_{\rho}} & \mathcal{L}_{12}}
\end{equation*}
over $LG \times LG$. It induces a binary operation on   $\mathcal{L}$ that covers the group structure of $LG$.  The  existence of the transformation $\alpha$ implies  under transgression the associativity of that binary operation. 
This equips  $\mathcal{L}$  with the structure of a Fréchet Lie group \cite[Theorem 3.1.7]{waldorf5},   making up a central extension
\begin{equation*}
1 \to \ueins \to \mathcal{L} \to LG \to 1\text{.}
\end{equation*}

\begin{definition}
\label{def:transgressive}
A central extension $\mathcal{L}$ of $LG$ is called \emph{transgressive}, if there exists a multiplicative bundle gerbe with connection over $G$ whose transgression is isomorphic to $\mathcal{L}$ as a central extension. 
\end{definition}

In \cite{waldorf10} a category $\ufusbunconsf {LX}$
is considered with objects the  Fréchet principal $\ueins$-bundles over $LX$ equipped with fusion products and fusive superficial connections, and morphisms the fusion-preserving, connection-preserving bundle morphisms. A construction in \cite[Section 4.2]{waldorf10} lifts the transgression functor \erf{eq:tr} to this category:
\begin{equation}
\label{eq:trfull}
\trcon: \hc 1 \ugrbcon X \to \ufusbunconsf {LX}\text{.}
\end{equation}
In case of a multiplicative bundle gerbe $\mathcal{G}$ with connection over $G$, this means in the first place that the underlying principal $\ueins$-bundle of the central extension $\mathcal{L}$ is equipped with a  fusion product $\lambda$   and with a fusive superficial connection $\nu$.

Under the lifted transgression functor, the transgression  $\trcon_{\mathcal{I}_{\rho}}$ of the trivial bundle gerbe with connection $\rho\in\Omega^2(X)$ is equipped with a fusion product and a connection, which under the trivialization $t_{\rho}\maps \trcon_{\mathcal{I}_{\rho}} \to \trivlin$ correspond to the trivial fusion product on $\trivlin$ and the connection 1-form $\varepsilon_{\nu} :=\tau_{S^1}(\rho) \in \Omega^1(LX)$ \cite[Lemma 3.6]{waldorf13}.
Thus, the group structure of $\mathcal{L}$ is induced by the fusion-preserving, connection-preserving bundle morphism
\begin{equation*}
\alxydim{@C=1.6cm}{\mathcal{L}_1 \otimes \mathcal{L}_2 \ar[r]^-{\trcon_{\mathcal{M}}} & \mathcal{L}_{12} \otimes \trcon_{\mathcal{I}_{\rho}} \ar[r]^-{\id \otimes t_{\rho}} & \mathcal{L}_{12} \otimes \trivlin_{\varepsilon_{\nu}}\text{.}}
\end{equation*}
This means (1) that $\varepsilon_{\nu} \in \Omega^1(LG^2)$ is the error 1-form of $\nu$. By Example \ref{ex:pathsplitting}, $\kappa := \tau_{[0,1]}(\rho)|_{P(G\times G)}$ is a multiplicative and contractible path splitting for $\varepsilon_{\nu}$. It means (2) that the fusion product is multiplicative, see \cite[Theorem 4.3.1]{Waldorfa}.

\noindent
Collecting all this data forces us to consider a category $\fusextcon{LG}$ with:
\begin{itemize}

\item 
Objects: Central extensions of $LG$ equipped with a multiplicative fusion product, a fusive superficial connection, and a multiplicative, contractible path splitting of its error 1-form.

\item
Morphisms: Fusion-preserving, connection-preserving isomorphisms of central extensions, with the same error 1-form and the same path splitting on both sides. 

\end{itemize}
The fact that transgression is a functor and monoidal implies that above procedure  defines a monoidal functor
\begin{equation}
\label{eq:multtrans}
\multtrcon: \hc 1\multgrbcon G \to \fusextcon {LG}\text{.}
\end{equation}

We can pass from superficial connections to thin structures in terms of an essentially surjective functor
\begin{equation*}
th: \fusextcon{LG} \to \fusextth{LG}
\end{equation*}
to the category of thin fusion extensions introduced in Section \ref{sec:fusextth}. 
Forgetting the fusion product and the thin structure  gives another functor from $\fusextth{LG}$ to the category $\ext {LG}$ of bare central extensions of $LG$. The composite
\begin{equation}
\label{eq:transgressivity}
\alxydim{}{\hc 1 \multgrbcon G \ar[r]^-{\multtrcon} & \fusextcon {LG} \ar[r]^-{th} &  \fusextth{LG}  \ar[r] & \ext {LG}}
\end{equation}
is the procedure from the beginning of the present subsection: the transgression of a multiplicative bundle gerbe with connection to a central extension. Thus, we obtain the following result, constituting the first part of Theorem \ref{th:main}.

\begin{proposition}
\label{prop:onlyif}
A central extension is transgressive only if it can be equipped with the structure of  a thin fusion extension.
\end{proposition}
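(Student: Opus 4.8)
The plan is to unwind the definition of ``transgressive'' and trace what structure transgression produces. By Definition~\ref{def:transgressive}, a central extension $\mathcal{L}$ is transgressive precisely when there exists a multiplicative bundle gerbe $(\mathcal{G},\rho,\mathcal{M},\alpha)$ with connection over $G$ whose transgression is isomorphic to $\mathcal{L}$ as a central extension. So I would start from such a gerbe and apply the lifted transgression functor $\trcon$ of \eqref{eq:trfull} to $\mathcal{G}$, obtaining a Fr\'echet principal $\ueins$-bundle over $LG$ with a fusion product $\lambda$ and a fusive superficial connection $\nu$. The discussion preceding the statement already explains that the transgression of $\mathcal{M}$, composed with the trivialization $t_\rho$ of $\trcon_{\mathcal{I}_\rho}$, induces the group structure on $\mathcal{L}$, that $\varepsilon_\nu = \tau_{S^1}(\rho)$ is the error $1$-form of $\nu$, and that (by Example~\ref{ex:pathsplitting}) $\kappa := \tau_{[0,1]}(\rho)|_{P(G\times G)}$ is a multiplicative and contractible path splitting of $\varepsilon_\nu$; moreover the fusion product is multiplicative by \cite[Theorem~4.3.1]{Waldorfa}. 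In other words, the data lands in the category $\fusextcon{LG}$.

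Next I would apply the functor $th\colon \fusextcon{LG} \to \fusextth{LG}$ to pass from the superficial connection $\nu$ to the associated thin structure $d^\nu$. By Definition~\ref{def:thinstructure} and Definition~\ref{def:mfthinstructure}, since $\nu$ is fusive and superficial and its error $1$-form admits a multiplicative and contractible path splitting, $d^\nu$ is a multiplicative and fusive thin structure, and together with the multiplicative fusion product $\lambda$ this exhibits $\mathcal{L}$ as an object of $\fusextth{LG}$, i.e.\ a thin fusion extension in the sense of Definition~\ref{def:thinfusionext}. Concretely, the composite functor \eqref{eq:transgressivity} sends the multiplicative bundle gerbe with connection to a thin fusion extension whose underlying bare central extension is the transgression of $\mathcal{G}$.

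Finally I would invoke the hypothesis: the transgression of $\mathcal{G}$ is isomorphic to $\mathcal{L}$ \emph{as a central extension} (not a priori as a thin fusion extension). So I transport the thin fusion extension structure across this isomorphism of bare central extensions: given an isomorphism $\varphi\colon \tr_{\mathcal{G}} \to \mathcal{L}$ in $\ext{LG}$, I push forward $\lambda$ and $d^\nu$ along $\varphi$ to obtain a multiplicative fusion product and a multiplicative, fusive thin structure on $\mathcal{L}$ itself (the pushed-forward fusion product is again multiplicative and associative, the pushed-forward thin homotopy equivariant structure again satisfies the cocycle condition, multiplicativity, and fusiveness, and it is still a thin structure because one can push forward the superficial connection $\nu$ too, preserving all the path-splitting properties since $\varphi$ is an isomorphism of $\ueins$-bundles). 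This equips $\mathcal{L}$ with the structure of a thin fusion extension, which is exactly the claim.

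The proof is essentially bookkeeping: the substantive content has already been established earlier in the excerpt (the construction of $\trcon$ in \cite{waldorf10}, the multiplicativity of the transgressed fusion product via \cite{Waldorfa}, the path-splitting computation in Example~\ref{ex:pathsplitting}, the existence of the functors $th$ and \eqref{eq:multtrans}). The one point that requires a word of care — and is the main thing to get right — is the last step: the definition of ``transgressive'' only gives an isomorphism of bare central extensions, so one must check that all the extra structure (multiplicative fusion product, multiplicative fusive thin structure, and an integrating fusive superficial connection with a multiplicative contractible path splitting of its error form) genuinely transports along such an isomorphism. This is routine because an isomorphism of central extensions is in particular an isomorphism of the underlying $\ueins$-bundles, under which connections, fusion products, bundle morphisms, and the conditions defining them all pull back functorially; but it is the place where the argument could go wrong if one conflated the two notions of isomorphism.
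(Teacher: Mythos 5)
Your proposal is correct and takes essentially the same route as the paper: the paper's proof is just the observation that the composite functor \erf{eq:transgressivity} realizes the transgression of a multiplicative bundle gerbe with connection as (the underlying central extension of) a thin fusion extension. The only difference is that you make explicit the final transport of the fusion product, thin structure, and integrating connection along the isomorphism of bare central extensions, a step the paper leaves implicit in the phrase \quot{can be equipped with}.
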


The functor \erf{eq:multtrans} has a version when the connections on both sides are dropped, at the price that it only exists as map, not as a functor.

\begin{proposition}
\label{prop:multtrans}
There exists a unique map $\multtr: \hc 0 \multgrbadcon G \to \hc 0 \fusextth{LG}$
such that the diagram
\begin{equation*}
\alxydim{@C=4em@R=\xyst}{\hc 0 \multgrbcon G \ar[d] \ar[r]^{\hc 0 \multtrcon}  & \hc 0 \fusextcon{LG} \ar[d]^{\hc 0 th} \\ \hc 0 \multgrbadcon G \ar[r]_{\multtr} & \hc 0 \fusextth{LG}}
\end{equation*}
is commutative.
\end{proposition}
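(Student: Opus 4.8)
The plan is to obtain uniqueness for free and to construct $\multtr$ by choosing connections, the only genuine issue being well-definedness. For uniqueness I would observe that the left vertical functor $\multgrbcon G\to\multgrbadcon G$ is essentially surjective: an object of $\multgrbadcon G$ by definition admits a connection on the gerbe, a $2$-form $\rho$ and a connection-preserving isomorphism $\mathcal M$, and Lemma~\ref{lem:canalpha} then supplies the unique coherence datum $\alpha$ satisfying the pentagon axiom, so the object lifts to $\multgrbcon G$. Hence $\hc 0$ of the left vertical map is onto, and commutativity of the square forces the value of $\multtr$ on every isomorphism class; thus $\multtr$ is unique if it exists.

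For existence I would set $\multtr[\mathcal G]:=\hc 0(th)\bigl(\hc 0(\multtrcon)[\hat{\mathcal G}]\bigr)$ for an arbitrary connection-lift $\hat{\mathcal G}\in\multgrbcon G$ of $\mathcal G$; the square then commutes by construction, so the task is to show the right-hand side is independent of the lift. Concretely: if $\hat{\mathcal G}_0,\hat{\mathcal G}_1\in\multgrbcon G$ have isomorphic underlying multiplicative bundle gerbes, I must produce an isomorphism $th(\multtrcon(\hat{\mathcal G}_0))\cong th(\multtrcon(\hat{\mathcal G}_1))$ in $\fusextth{LG}$. The first step is to reduce to the case that the two underlying gerbes are equal: an isomorphism of the underlying multiplicative bundle gerbes can be equipped with a connection on its defining line bundle, turning it into a connection-preserving isomorphism $\hat{\mathcal G}_0\to\hat{\mathcal G}_1'$ in $\multgrbcon G$, where $\hat{\mathcal G}_1'$ carries the connection data transported from $\hat{\mathcal G}_0$ and the accompanying transformation \erf{eq:beta} is automatically coherent by Lemma~\ref{lem:canbeta}. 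Functoriality of $\multtrcon$ then identifies $th(\multtrcon(\hat{\mathcal G}_0))$ with $th(\multtrcon(\hat{\mathcal G}_1'))$, so I may assume $\hat{\mathcal G}_0$ and $\hat{\mathcal G}_1$ are two connection-lifts of one fixed multiplicative bundle gerbe $\mathcal G$.

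In that situation the underlying central extension and the multiplicative fusion product of the two transgressions present no difficulty, since these are features of $\mathcal G$ that do not involve the connection; the whole content is to match the two thin structures $d^{\nu_0}$ and $d^{\nu_1}$, where $\nu_i$ is the transgressed fusive superficial connection. The approach I would take is: the connection data on a fixed $\mathcal G$ (the connection on $\mathcal G$, the $2$-form $\rho$, the $\ueins$-connection on the bundle underlying $\mathcal M$) ranges over an affine space, and along any path Lemma~\ref{lem:canalpha} provides the unique $\alpha$, so $\hat{\mathcal G}_0$ and $\hat{\mathcal G}_1$ can be joined by a smooth family $\hat{\mathcal G}_t$ in $\multgrbcon G$ with fixed underlying gerbe; assembling this family into a connection-lift of $\pr_G^{*}\mathcal G$ over the Lie group $\R\times G$ and transgressing, naturality of $\trcon$ shows that the $\nu_t$ are the $\R$-slices of a single fusive superficial connection on the pullback of $\tr_{\mathcal G}$ to $\R\times LG$, whose curvature is $\tau_{S^1}$ of a $3$-form on $\R\times G$ that differs from $\pr_G^{*}(\text{curvature of }\hat{\mathcal G}_0)$ by an exact form, by the Poincar\'e lemma in the $\R$-direction, since every slice represents the same class of $\mathcal G$. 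Combining this exactness with superficiality, parallel transport along the thin paths used to define $d^{\nu_t}$ does not move in $t$, which is the invariance we want.

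I expect this last step, the $t$-independence of the transgressed thin structure, to be the main obstacle and the place where all the analytic content sits; the affine-space and reduction arguments above are formal. If the homotopy computation becomes unwieldy I would instead invoke the non-multiplicative results of \cite{waldorf10,waldorf11}: there the composite of $\trcon$ with the functor $\ufusbunconsf{LX}\to\ufusbunth{LX}$ forgetting connections but keeping the thin structure is known to factor through $\hc 1\ugrb X$, i.e.\ to depend only on the bundle gerbe without connection; applying this to $\mathcal G$ over $G$, to $\mathcal M$ over $G^2$ and to the coherence data, and tracking how these pieces combine under $\multtrcon$, again yields $th(\multtrcon(\hat{\mathcal G}_0))\cong th(\multtrcon(\hat{\mathcal G}_1))$. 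Either way $\multtr$ is well-defined, and the square commutes by construction.
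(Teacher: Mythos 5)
Your uniqueness argument is fine and coincides with the paper's (surjectivity of the left vertical map is immediate from the definition of $\multgrbadcon G$), and your reduction to two connection-lifts of one fixed multiplicative bundle gerbe is harmless. The genuine gap is in the well-definedness step for two connections on the same $\mathcal{G}$. You assert that ``the underlying central extension and the multiplicative fusion product of the two transgressions present no difficulty, since these are features of $\mathcal{G}$ that do not involve the connection''; this mislocates the problem. In this paper transgression is only defined on gerbes \emph{with} connection, and the group structure on $\trcon_{\mathcal{G}}$ uses the connection data through the canonical trivialization $t_{\rho}$ of $\trcon_{\mathcal{I}_{\rho}}$; hence the two transgressions $\mathcal{L}_1,\mathcal{L}_2$ are genuinely different central extensions, and the entire content of the proof is to construct an isomorphism between them and to check that it is fusion-preserving and thin. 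There is no common underlying object on which the two thin structures $d^{\nu_0}$ and $d^{\nu_1}$ could simply be compared, so the statement your homotopy-of-connections argument aims at (``parallel transport along the thin paths does not move in $t$'') is not the statement that is needed, and the step you yourself flag as the main obstacle --- passing from exactness of the curvature difference plus superficiality to $t$-invariance, after somehow identifying the fibres over $t=0$ and $t=1$ --- is exactly the missing proof. (A minor slip in the same passage: the loop group of $\R\times G$ is $L\R\times LG$, not $\R\times LG$.)

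The paper's proof is short but leans on two specific facts that your sketch does not supply. Writing the two multiplicative connections as related by a connection-preserving isomorphism $\id_{\varepsilon}\maps\mathcal{G}_{\lambda_1}\to\mathcal{G}_{\lambda_2}\otimes\mathcal{I}_{\beta}$ with $\beta\in\Omega^2(G)$, the comparison map is $\varphi=(\id\otimes t_{\beta})\circ\trcon_{\id_{\varepsilon}}$; that $\varphi$ is an isomorphism of \emph{central extensions}, i.e.\ a group homomorphism, is exactly the cited result \cite[Proposition 5.1.3]{Nikolausa} --- the part you declared to present ``no difficulty''. Fusion-preservation is then formal (transgressed morphisms and $t_{\beta}$ are fusion-preserving), and thinness follows from the specific fact that $\tau_{S^1}(\beta)$, as a connection on the trivial bundle $\trivlin$, induces the trivial thin structure \cite[Proposition 3.2.3]{waldorf11}; this is where superficiality actually enters, not through a Poincar\'e-lemma argument in a parameter direction. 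Your fallback appeal to the non-multiplicative results of \cite{waldorf10,waldorf11} points in the right direction (connection-independence at the level of thin structures), but the multiplicative bookkeeping you defer --- in particular why the comparison isomorphism respects the group structures built from $t_{\rho}$ --- is precisely where the work lies, so as it stands the proposal does not establish that $\multtr$ is well defined.
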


\begin{proof}
Uniqueness is clear as the vertical map on the left is surjective (by definition of $\multgrbadcon G$). For the existence, we prove that the thin fusion extensions one gets from different choices of connections on the same multiplicative bundle gerbe are isomorphic.

Let $\mathcal{G}$ be a multiplicative bundle gerbe over $G$ with two connections, say $\lambda_1$ and $\lambda_2$, with corresponding thin fusion extensions $\mathcal{L}_1$ and $\mathcal{L}_2$, respectively. 
In \cite[Proposition 5.1.3]{Nikolausa} we have have constructed an isomorphism $\varphi: \mathcal{L}_1 \to \mathcal{L}_2$ between central extensions. It was defined as the composition $\varphi := (\id \otimes t_{\beta}) \circ \trcon_{\id_{\varepsilon}}$, where  $\beta \in \Omega^2(G)$ and $\id_{\varepsilon}\maps  \mathcal{G}_{\lambda_1} \to \mathcal{G}_{\lambda_2} \otimes \mathcal{I}_\beta$ is a connection-preserving isomorphism. The transgressed isomorphism  $\trcon_{\id_\varepsilon}$  and the canonical trivialization $t_{\beta}:\trcon_{\mathcal{I}_\beta} \to \trivlin_{\tau_{S^1}(\beta)}$ are  fusion-preserving; this shows that $\varphi$ is fusion-preserving. As a connection on the trivial bundle, the 1-form $\tau_{S^1}(\beta)$ induces the trivial thin structure \cite[Proposition 3.2.3]{waldorf11}; so that $th(t_{\beta})$ is a thin bundle morphism. Hence, $\varphi$ is an isomorphism of thin fusion extensions.
\end{proof}

\begin{example}
We consider the transgression of the trivial gerbe $\mathcal{I}_{\omega}$ equipped with a multiplicative structure defined by a principal $\ueins$-bundle $P$ with connection over $G \times G$ (see Example \ref{ex:trivialgerbe}). The  trivialization $t_{\omega}: \trcon_{\mathcal{I}_{\omega}} \to \ueins \times LG$ induces an isomorphism between the transgressive thin fusion extension  $\trcon_{\mathcal{I}_{\omega}}$ and the thin fusion extension $\mathcal{L}_P$ of Examples \ref{ex:trivial} and \ref{ex:thinfusionextriv}.
In particular, $\mathcal{L}_P$ is transgressive.
\end{example}

\begin{example}
Let $G$ be a compact, simple, simply-connected Lie group, let $\gbas$ be the basic gerbe over $G$ (Example \ref{ex:basicgerbe}), and let $\wzwmodel$ be the universal central extension of $LG$ (Examples \ref{ex:wzwmodel} and \ref{ex:wzwmodelcon}). There is an isomorphism
\begin{equation*}
\varphi: \wzwmodel \to \trcon_{\gbas}
\end{equation*}
of central extensions that is fusion-preserving and thin, and so establishes an isomorphism between thin fusion extensions. In particular, the universal central extension $\mathcal{L}_G$ is transgressive.
The isomorphism $\varphi$ is defined by
\begin{equation*}
\varphi  (\phi,z)=\partial\mathcal{T} \cdot z \cdot \exp  2\pi\im\left ( -\int_{D^2} \omega \right )\text{.}
\end{equation*}
Here, $\mathcal{T}\maps  \phi^{*}\gbas \to \mathcal{I}_{\omega}$ is an arbitrarily chosen trivialization of $\phi^{*}\gbas$ over $D^2$ and $\partial \mathcal{T}$ denotes its restriction to the boundary; the latter is a trivialization of $\partial\phi^{*}\gbas$ over $S^1$, constituting an element in $\trcon_{\gbas}$ over the loop $\partial\phi$. It is straightforward to see that $\varphi$ is well-defined, $\ueins$-equivariant,  fusion-preserving, and a group homomorphism; see \cite[Section 4.3]{Waldorfa} for details.
In order to see that $\varphi$ is thin, we consider the thin connection  $\nu$ on $\wzwmodel$ that integrates the thin homotopy equivariant structure $d$  (see Example \ref{ex:wzwmodelcon}), and show the stronger statement that $\varphi$ is  connection-preserving.  Indeed, if $\gamma:[0,1] \to LG$ is a path, and $\phi_0,\phi_1:D^2 \to G$ are smooth maps with $\partial\phi_0 = \gamma(0)$ and $\partial\phi_1 = \gamma(1)$, then $pt^{\nu}_{\gamma}(\phi_0,1)=(\phi_1,\mathrm{e}^{2\pi \im S_{\mathrm{WZ}}(\Phi_{\gamma})})$. Let $\mathcal{T}_0,\mathcal{T}_1$ be trivializations of $\phi_0^{*}\gbas$ and $\phi_1^{*}\gbas$, respectively, and let $\tilde\nu$ denote the connection on $\trcon_{\gbas}$. Employing the definition of the transgression functor, see \cite[Section 4.3]{waldorf10}, the parallel transport in $\trcon_{\gbas}$ is $pt^{\tilde\nu}_{\gamma}(\partial \mathcal{T}_0)=\partial \mathcal{T}_1 \cdot \mathcal{A}_{\gbas}(h_{\gamma},\mathcal{T}_0,\mathcal{T}_1)$, where the latter term is the surface holonomy of $\gbas$ with the trivializations as boundary conditions. In the present case of a 2-connected Lie group, it can be computed via the 3-form $H$ and the two 2-forms $\omega_0,\omega_1$ of the trivializations $\mathcal{T}_0,\mathcal{T}_1$, namely as
\begin{equation*}
\mathcal{A}_{\gbas}(h_{\gamma},\mathcal{T}_0,\mathcal{T}_1)= \exp 2\pi \im \left (S_{\mathrm{WZ}}(\Phi_{\gamma}) + \int_{D^2}\omega_0 - \int_{D^2}\omega_1 \right)\text{,}
\end{equation*} 
see \cite[Proposition 3.1.4 (iii)]{waldorf4}. Now we compute
\begin{multline*}
pt_{\gamma}(\varphi(\phi_0,1)) =pt_{\gamma}(\partial \mathcal{T}_0)\cdot \exp  2\pi\im \left (- \int_{D^2} \omega_0 \right )= \partial \mathcal{T}_1 \cdot \exp 2\pi\im  \left (S_{\mathrm{WZ}}(\Phi_{\gamma})- \int_{D^2} \omega_1 \right )\\=\varphi(\phi_1,\mathrm{e}^{2\pi \im S_{\mathrm{WZ}}(\Phi_{\gamma})})=\varphi(pt_{\gamma}^{\nu}(\phi_0,1))\text{.}
\end{multline*}
This shows that $\varphi$ commutes with the parallel transport along arbitrary paths; hence, $\varphi$ is connection-preserving, in particular thin. 
\end{example}

\subsection{Regression and equivalence result}

By the main result of \cite{waldorf10}, the lifted transgression functor \erf{eq:trfull} is an equivalence of categories, and has  for fixed $x\in X$ a canonical inverse functor
\begin{equation}
\label{eq:regcon}
\uncon_x: \ufusbunconsf{LX} \to \hc 1 \ugrbcon X\text{,}
\end{equation}
called \emph{regression}. We need the following result about the regression of trivial bundles, which is explained in Appendix \ref{app:toptrivreg}. If $\varepsilon \in \Omega^1(LX)$ is a superficial connection on the trivial bundle over $LX$, and fusive with respect to the trivial fusion product, then every path splitting $\kappa \in \Omega^1(PX)$  defines a connection-preserving isomorphism $\mathcal{T}_{\kappa}: \uncon(\trivlin_{\varepsilon}) \to \mathcal{I}_{\rho_{\kappa}}$ between the regression of $\trivlin_{\varepsilon}$ and the trivial bundle gerbe $\mathcal{I}$ over $X$ equipped with a connection 2-form $\rho_{\kappa}\in \Omega^2(X)$ defined from $\kappa$.  

If $X$ is a group,  we always choose $x=1$ and omit the index.  
We use the functor $\uncon=\uncon_1$ in order to construct a regression functor
\begin{equation}
\label{eq:multreg}
\multuncon: \fusextcon{LG} \to \hc 1 \multgrbcon G
\end{equation}
defined on the category $\fusextcon {LG}$ introduced in the previous section.
Suppose  $\mathcal{L}$ is a central extension of $LG$ equipped with a multiplicative fusion product $\lambda$, a fusive superficial  connection,  and a multiplicative path splitting $\kappa$ of its error 1-form $\varepsilon$ (for the definition of $\multuncon$ we do not need that $\kappa$ is contractible). The group structure defines over $LG \times LG$ a connection-preserving, fusion-preserving bundle isomorphism
\begin{equation*}
\mu: \mathcal{L}_1 \otimes \mathcal{L}_2 \to \mathcal{L}_{12} \otimes \trivlin_\varepsilon\text{,}
\end{equation*}
and the associativity of the group structure implies a commutative diagram over $LG^3$ .

We let $\mathcal{G} := \uncon(\mathcal{L})$ be the regressed bundle gerbe over $G$ with connection. Over $G \times G$ we consider the connection-preserving isomorphism $\mathcal{M}$ defined as
\begin{equation}
\label{eq:multisocon}
\alxydim{@C=4em}{\mathcal{G}_1 \otimes \mathcal{G}_2 \ar[r]^-{\uncon(\mu)} & \mathcal{G}_{12} \otimes \uncon(\trivlin_{\varepsilon}) \ar[r]^-{\id \otimes \mathcal{T}_{\kappa}} & \mathcal{G}_{12} \otimes \mathcal{I}_{\rho_{\kappa}}\text{.}}
\end{equation}
The various pullbacks of $\mathcal{M}$ to $G \times G \times G$ constitute the outer arrows of the  following diagram in the category $\hc 1 \ugrbcon- (G \times G \times G)$:
\begin{equation*}
\footnotesize
\alxydim{@R=4em@C=2.8em}{\mathcal{G}_1 \otimes \mathcal{G}_2 \otimes \mathcal{G}_3 \ar[ddr]|{\id \otimes \uncon(\mu)_{2,3}} \ar[drr]|{\uncon(\mu)_{1,2} \otimes \id} \ar[rrrr]^-{\mathcal{M}_{1,2} \otimes \id} \ar[dddd]_{\id \otimes \mathcal{M}_{2,3}} &&&& \mathcal{G}_{12} \otimes \mathcal{G}_3 \otimes \mathcal{I}_{\rho_{\kappa,1,2}}\ar[ddl]|{\uncon(\mu)_{12,3} \otimes \id} \ar[dddd]^{\mathcal{M}_{12,3} \otimes \id} \\ &&\mathcal{G}_{12} \otimes \mathcal{G}_3 \otimes  \uncon(\trivlin_{\varepsilon_{1,2}}) \ar[d]_{\uncon(\mu)_{12,3 \otimes \id} } \ar[rd]^{\uncon(\mu)_{12,3} \otimes \mathcal{T}_{\kappa_{1,2}}} \ar[urr]|{\id \otimes \id \otimes \mathcal{T}_{\kappa_{1,2}}} && \\ &\hspace{-7em}\mathcal{G}_1 \otimes \mathcal{G}_{23} \otimes \uncon(\trivlin_{\varepsilon_{2,3}}) \ar[r]^-{\uncon(\mu)_{1,23} \otimes \id} \ar[dr]_{\uncon(\mu)_{1,23}\otimes \mathcal{T}_{\kappa_{2,3}}}\ar[ddl]|{\id \otimes \id \otimes \mathcal{T}_{\kappa_{2,3}}} & \mathcal{G}_{123} \otimes \uncon(\trivlin_{\varepsilon_{\Delta}}) \ar[r]_-{\id \otimes \mathcal{T}_{\kappa_{1,2}}} \ar[d]^{\id \otimes \mathcal{T}_{\kappa_{2,3}}}   &\mathcal{G}_{123}  \otimes  \uncon(\trivlin_{\varepsilon_{12,3}})\otimes \mathcal{I}_{\rho_{\kappa,1,2}}\hspace{-7em} \ar[ddr]|{\id \otimes \mathcal{T}_{\kappa_{12,3}} \otimes \id} &\\ &&\mathcal{G}_{123}  \otimes  \uncon(\trivlin_{\varepsilon_{1,23}})\otimes \mathcal{I}_{\rho_{\kappa,2,3}}\ar[drr]|{\id \otimes \mathcal{T}_{\kappa_{1,23}} \otimes \id} &&  \\ \mathcal{G}_1 \otimes \mathcal{G}_{23} \otimes \mathcal{I}_{\rho_{\kappa,2,3}} \ar[urr]|{\uncon(\mu)_{1,23} \otimes \id} \ar[rrrr]_-{\mathcal{M}_{1,23} \otimes \id} &&&& \mathcal{G}_{123} \otimes \mathcal{I}_{\rho_{\kappa,\Delta}}}
\end{equation*}
All triangular subdiagrams commute obviously. There remain two four-sided   subdiagrams whose commutativity is to check. The one that touches the upper left corner commutes due to the commutative diagram for $\mu$ over $LG^3$ and the fact that $\uncon$ is a functor. The one that touches the lower right corner commutes due to the multiplicativity of $\kappa$ and the additivity of the trivialization $\mathcal{T}_{\kappa}$ shown in Lemma \ref{lem:kappaadd}.

The commutativity of the diagram in $\hc 1 \ugrbcon- (G^3)$ implies the existence of a connection-preserving transformation $\alpha$ that fills the diagram  in $\ugrbcon {G^3}$. Thus, by Lemma \ref{lem:canalpha} there is a unique connection-preserving transformation $\alpha$ making $(\mathcal{G},\rho_{\kappa},\mathcal{M},\alpha)$ a multiplicative bundle gerbe with connection; this defines the functor $\multuncon$ on the level of objects.
On the level of morphisms, one similarly composes a commutative diagram from an isomorphism in $\fusextcon{LG}$,  and then Lemma \ref{lem:canbeta} implies that it yields a morphism between multiplicative bundle gerbes with connection.

\begin{comment}
Note that the definition of the functor $\multuncon$ does not use that $\kappa$ is contractible; this is needed for the next statement.
\end{comment}

\begin{theorem}
\label{th:equivalence}
For a connected Lie group $G$, the two functors $\multtrcon$ and $\multuncon$ form an equivalence of categories:
\begin{equation*}
\hc 1 \multgrbcon G \cong \fusextcon{LG}\text{.}
\end{equation*}
\end{theorem}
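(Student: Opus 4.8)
The plan is to deduce the multiplicative equivalence from the non-multiplicative one established in \cite{waldorf10}, where $\trcon\maps \hc 1 \ugrbcon G \to \ufusbunconsf{LG}$ is already shown to be an equivalence with quasi-inverse $\uncon = \uncon_1$; fix natural isomorphisms $\eta\maps \uncon\circ\trcon \to \id$ and $\zeta\maps \trcon\circ\uncon \to \id$. The functors $\multtrcon$ and $\multuncon$ sit over $\trcon$ and $\uncon$ along the evident forgetful functors — forgetting $\rho,\mathcal{M},\alpha$ on the gerbe side, and forgetting the group structure together with its path splitting on the extension side — so that $\trcon$ and $\uncon$ are recovered from $\multtrcon$ and $\multuncon$ by forgetting. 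Hence it suffices to promote $\eta$ and $\zeta$ to natural isomorphisms in $\hc 1 \multgrbcon G$ and $\fusextcon{LG}$. The crucial structural simplification is that all coherence $2$-cells — the transformations $\alpha$ occurring in a multiplicative gerbe and the $2$-morphism components of a $1$-morphism between such — are \emph{uniquely} determined over a connected group by Lemmas~\ref{lem:canalpha} and \ref{lem:canbeta}. So the only points to verify are: (i) on objects, that the underlying connection-preserving isomorphisms provided by $\eta$ and $\zeta$ carry one multiplicative structure to the other, which by Lemma~\ref{lem:canbeta} amounts to exhibiting a filling transformation $\beta$ of a square of the form \erf{eq:beta}; and (ii) on the loop-group side, that the underlying isomorphism of $\zeta$ is a morphism of $\fusextcon{LG}$, i.e.\ intertwines the error $1$-forms and path splittings.

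For (i), take $(\mathcal{G},\rho,\mathcal{M},\alpha)$ and put $(\mathcal{L},\lambda,\nu,\kappa)=\multtrcon(\mathcal{G},\rho,\mathcal{M},\alpha)$ with $\kappa=\tau_{[0,1]}(\rho)|_{P(G\times G)}$ as in Example~\ref{ex:pathsplitting}. Applying $\multuncon$ gives $\uncon(\mathcal{L})$ with the multiplicative isomorphism $\mathcal{M}'$ of \erf{eq:multisocon} and its unique $\alpha'$; the underlying $1$-morphism is $\eta_{\mathcal{G}}\maps \uncon(\trcon_{\mathcal{G}})\to\mathcal{G}$. To compare $\mathcal{M}'$ with $\mathcal{M}$ one unwinds $\mathcal{M}'=(\id\otimes\mathcal{T}_{\kappa})\circ\uncon(\mu)$, where $\mu=(\id\otimes t_{\rho})\circ\trcon_{\mathcal{M}}$ is the transgressed group structure: naturality of $\eta$ replaces $\uncon(\trcon_{\mathcal{M}})$ by $\mathcal{M}$ up to $\eta$, while the description of the regression of a trivial bundle from Appendix~\ref{app:toptrivreg} identifies $\rho_{\kappa}=\rho$ and shows that $\mathcal{T}_{\kappa}\circ\uncon(t_{\rho})$ is, under the canonical identification $\uncon(\trcon_{\mathcal{I}_{\rho}})\cong\mathcal{I}_{\rho}$, the identity. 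Exactly as in the diagram chase preceding the theorem — using Lemma~\ref{lem:kappaadd} for the additivity of $\mathcal{T}_{\kappa}$ — the resulting square commutes in $\hc 1 \ugrbcon- (G\times G)$, so a filling $\beta$ exists and, by Lemma~\ref{lem:canbeta}, makes $(\eta_{\mathcal{G}},\beta)$ a $1$-morphism of multiplicative bundle gerbes; a $2$-morphism argument as in the proof of Proposition~\ref{prop:multtrans} shows it is invertible, and naturality in $\mathcal{G}$ follows from that of $\eta$ together with the uniqueness in Lemma~\ref{lem:canbeta}.

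For (ii), take $(\mathcal{L},\lambda,\nu,\kappa)\in\fusextcon{LG}$, regress it to $(\uncon(\mathcal{L}),\rho_{\kappa},\mathcal{M},\alpha)$ as above, and transgress back. The isomorphism $\zeta_{\mathcal{L}}\maps \trcon_{\uncon(\mathcal{L})}\to\mathcal{L}$ is fusion- and connection-preserving since $\zeta$ is a natural isomorphism in $\ufusbunconsf{LG}$; what must be added is that the transgressed group structure on $\trcon_{\uncon(\mathcal{L})}$ — whose error $1$-form is $\tau_{S^1}(\rho_{\kappa})$ with path splitting $\tau_{[0,1]}(\rho_{\kappa})|_{P(G\times G)}$ — is carried by $\zeta_{\mathcal{L}}$ to the original group structure, with error $1$-form $\varepsilon$ and path splitting $\kappa$. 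This is exactly where \emph{contractibility} of $\kappa$ is used: the Appendix~\ref{app:toptrivreg} description of $\uncon(\trivlin_{\varepsilon})$ pins down $\rho_{\kappa}$ from $\kappa$ so that $\tau_{[0,1]}(\rho_{\kappa})|_{P(G\times G)}=\kappa$ (without contractibility $\rho_{\kappa}$ would only be determined up to a closed correction), and then the path-splitting identity $\lop^{*}\varepsilon=\pr_2^{*}\kappa-\pr_1^{*}\kappa$ on $P(G\times G)^{[2]}$, together with $\lop$ being a subduction, forces $\tau_{S^1}(\rho_{\kappa})=\varepsilon$. As $\zeta$ also intertwines $\trcon_{\mathcal{M}}$ with $\mu$ and $t_{\rho_{\kappa}}$ with $\mathcal{T}_{\kappa}$, the two group structures agree, so $\zeta_{\mathcal{L}}$ is an isomorphism in $\fusextcon{LG}$, natural in $\mathcal{L}$.

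Finally, $\{(\eta_{\mathcal{G}},\beta)\}$ and $\{\zeta_{\mathcal{L}}\}$ serve as the unit and counit of the claimed equivalence; the triangle identities hold because they already hold for $\eta$ and $\zeta$ at the level of underlying bundles, and the additional $2$-cell data is forced by Lemmas~\ref{lem:canalpha} and \ref{lem:canbeta}. The main obstacle is the bookkeeping in (i)--(ii) reconciling the two trivializations of the relevant trivial gerbe and bundle — $t_{\rho}$ from transgression and $\mathcal{T}_{\kappa}$ from regression — and, tied to this, the precise role played by contractibility of $\kappa$ in matching error $1$-forms and path splittings. The coherence transformations, which would normally be the painful part of such a comparison, cost essentially nothing here, by the connectedness Lemmas~\ref{lem:canalpha} and \ref{lem:canbeta}.
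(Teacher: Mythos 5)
Your overall plan is the paper's own: promote the two natural equivalences of the non-multiplicative transgression--regression correspondence, handle the coherence cells with the connectedness Lemmas \ref{lem:canalpha} and \ref{lem:canbeta}, and feed in the Appendix comparison of the trivializations $t_{\rho}$ and $\mathcal{T}_{\kappa}$. Your part (i) is essentially the paper's treatment of $\multuncon\circ\multtrcon$ (Proposition \ref{prop:trivbase}, Lemma \ref{lem:kappaadd}, Lemma \ref{lem:canbeta}). The gap is in part (ii), at exactly the step you flag as crucial, and the specific argument you give there does not work. First, $\rho_{\kappa}$ is \emph{not} ``determined only up to a closed correction without contractibility'': it is uniquely characterized by $\ev_1^{*}\rho_{\kappa}=B_{\varepsilon}-\mathrm{d}\kappa$, contractible or not. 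Second, the identity $\tau_{[0,1]}(\rho_{\kappa})|_{P(G\times G)}=\kappa$ does not follow from contractibility and is not what the Appendix establishes (its lemma goes the other way: $\kappa=\tau_{[0,1]}(\rho)$ implies $\rho_{\kappa}=\rho$); contractibility only probes the end-point direction, since the contraction $\phi_{\gamma}$ has constant initial point, so it cannot exclude a discrepancy pulled back along the initial-point evaluation. Third, $\lop\maps PG^{[2]}\to LG$ is not a subduction -- its image consists only of loops having sitting instants at two fixed points of $S^1$ -- so the step ``the path-splitting identity together with $\lop$ being a subduction forces $\tau_{S^1}(\rho_{\kappa})=\varepsilon$'' fails.

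What actually closes part (ii) is Proposition \ref{prop:trivloop}: there contractibility of $\kappa$ is used to make the integrals of $\kappa$ along the retraction paths vanish, giving the on-the-nose equality $\varphi_{\trivlin_{\varepsilon}}=t_{\rho_{\kappa}}\circ\trcon_{\mathcal{T}_{\kappa}}$ of \erf{eq:eqmorph}; this yields $\tau_{S^1}(\rho_{\kappa})=\varepsilon$ as a formal consequence (both sides are connection-preserving isomorphisms), and it is precisely the commutativity of the right-hand portion of the diagram over $LG\times LG$ showing that the counit component intertwines the transgressed multiplication with the original $\mu$, i.e. is a group homomorphism and hence a morphism of $\fusextcon{LG}$. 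Your sentence ``$\zeta$ also intertwines $t_{\rho_{\kappa}}$ with $\mathcal{T}_{\kappa}$'' is exactly this proposition, but as written you assert it without proof and substitute the flawed derivation above in its place; with Proposition \ref{prop:trivloop} invoked there, your argument reduces to the paper's proof.
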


\begin{proof}
We consider first the composite $\multtrcon \circ \multuncon$. Let  $\mathcal{L}$ be a central extension of $LG$ equipped with a fusion product $\lambda$, a fusive superficial  connection,  and a multiplicative, contractible path splitting $\kappa$ of its error 1-form $\varepsilon$. We denote by $(\mathcal{G},\rho_{\kappa},\mathcal{M},\alpha)$ the regressed multiplicative bundle gerbe over $G$. Let $\varphi: \trcon\circ \uncon \to \id_{\ufusbunconsf{LG}}$ be the natural transformation that establishes one half  of the fact that $\trcon$ and $\uncon$ form an equivalence of categories. Thus, we have a connection-preserving, fusion-preserving isomorphism $\varphi_{\mathcal{L}}: \trcon_{\mathcal{G}} \to \mathcal{L}$ over $LG$. 
The diagram
\begin{equation*}
\alxydim{@C=3em}{\pr_1^{*}\trcon_{\mathcal{G}} \otimes \pr_2^{*}\trcon_{\mathcal{G}} \ar[rr]^{\trcon_{\mathcal{M}}} \ar[dd]_{\pr_1^{*}\varphi_{\mathcal{L}} \otimes \pr_2^{*}\varphi_{\mathcal{L}}} \ar[dr]_{\trcon_{\uncon(\mu)}} &  & m^{*}\trcon_{\mathcal{G}} \otimes \trcon_{\mathcal{I}_{\rho_{\kappa}}}  \ar[r]^{\id \otimes t_{\rho_{\kappa}}} & m^{*}\trcon_{\mathcal{G}} \otimes \trivlin_{\varepsilon}\ar[dd]^{m^{*}\varphi_{\mathcal{L}} \otimes \id} \\ & m^{*}\trcon_{\mathcal{G}} \otimes \trcon_{\uncon(\trivlin_{\varepsilon})} \ar[ur]_{\id \otimes \trcon_{\mathcal{T}_{\kappa}}}  \ar[d]^{m^{*}\varphi_{\mathcal{L}} \otimes \varphi_{\trivlin_{\varepsilon}}} \\ \pr_1^{*}\mathcal{L} \otimes \pr_2^{*}\mathcal{L} \ar[r]_{\mu} & m^{*}\mathcal{L} \otimes \trivlin_{\varepsilon} \ar@{=}[rr] && m^{*}\mathcal{L} \otimes \trivlin_{\varepsilon}}
\end{equation*}
of bundle isomorphisms over $LG \times LG$ is commutative: the  triangular diagram is the definition of the isomorphism $\mathcal{M}$, the left part commutes because $\varphi$ is natural, and the right part commutes due to Proposition \ref{prop:trivloop} (this uses that $\kappa$ is contractible). The bottom line is  the multiplication of $\mathcal{L}$, and the top line  is by definition the group structure on $\tr_{\mathcal{G}}$. Hence, $\varphi_{\mathcal{L}}$ is connection-preserving, fusion-preserving and a group homomorphism.

Now we look at $\multuncon \circ \multtrcon$. 
Let $(\mathcal{G},\rho,\mathcal{M},\alpha)$ be a multiplicative bundle gerbe with connection over $G$. Let $\mathcal{L}$ be its transgression, with fusion product $\lambda$, multiplication isomorphism $\mu$, error 1-form $\varepsilon = \tau_{S^1}(\rho)$ and path splitting $\kappa = \tau_{[0,1]}(\rho)|_{P(G \times G)}$. Let $\mathcal{A}: \uncon \circ \trcon \to \id_{\ugrbcon G}$ be the natural transformation that establishes the second half of the fact that $\trcon$ and $\uncon$ form an equivalence; thus, $\mathcal{A}_{\mathcal{G}}: \uncon(\mathcal{L}) \to \mathcal{G}$ is a connection-preserving isomorphism.  The diagram
\begin{equation*}
\alxydim{@C=1.5em}{\uncon(\mathcal{L})_1 \otimes \uncon(\mathcal{L})_2 \ar[rr]^{\uncon(\mu)} \ar[dd]_{\pr_1^{*}\mathcal{A}_{\mathcal{G}} \otimes \pr_2^{*}\mathcal{A}_{\mathcal{G}}} \ar[rd]^->>>>>{\uncon(\trcon_{\mathcal{M}})} & &  \uncon(\mathcal{L})_{12} \otimes \uncon(\trivlin_{\varepsilon}) \ar[rr]^-{\id \otimes \mathcal{T}_{\kappa}} && \uncon(\mathcal{L})_{12} \otimes \mathcal{I}_{\rho_\kappa} \ar[dd]^{m^{*}\mathcal{A}_{\mathcal{G}} \otimes \id}\\ & \hspace{-1em}\uncon(\mathcal{L})_{12} \otimes \uncon(\trcon_{\mathcal{I}_{\rho}})\hspace{-1em} \ar[d]^{m^{*}\mathcal{A}_{\mathcal{G}} \otimes \mathcal{A}_{\mathcal{I}_{\rho}}} \ar[ru]^<<<<<<<{\id \otimes \uncon(t_{\rho})}  \\ \mathcal{G}_1 \otimes \mathcal{G}_2 \ar[r] & \mathcal{G}_{12} \otimes \mathcal{I}_{\rho} \ar@{=}[rrr] &&& \mathcal{G}_{12} \otimes \mathcal{I}_{\rho}}
\end{equation*}
in the category $\hc 1 \ugrbcon- (G \times G)$ is commutative: the triangular diagram is the definition of the multiplication $\mu$, the left part is the naturality of $\mathcal{A}$, and the right part is Proposition \ref{prop:trivbase}. This means that there exists a connection-preserving transformation $\beta$ that fills the diagram. Now we are in the situation of Lemma \ref{lem:canbeta}, saying that $\beta$ satisfies the compatibility condition with respect to $\alpha$  and $\alpha'$, and it becomes an isomorphism between multiplicative bundle gerbes with connection over $G$.
\end{proof}

In the commutative diagram of Proposition \ref{prop:multtrans} the maps $\hc 0 \multtrcon$ and $\hc 0 th$ are surjective (by Theorem \ref{th:equivalence} and by definition, respectively), hence the map 
\begin{equation*}
\multtr: \hc 0 \multgrbadcon G \to \hc 0 \fusextth{LG}
\end{equation*}
is surjective.
Thus, we have the next part of  Theorem \ref{th:main}:

\begin{corollary}
\label{co:if}
If $G$ is connected, then every  thin fusion extension of $LG$ is transgressive.
\end{corollary}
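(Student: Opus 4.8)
The plan is to read the statement off directly from the equivalence of categories in Theorem~\ref{th:equivalence} together with the essential surjectivity of the functor $th$. First I would observe that, because $\multtrcon$ and $\multuncon$ form an equivalence, the induced map $\hc 0 \multtrcon \maps \hc 0 \multgrbcon G \to \hc 0 \fusextcon{LG}$ is a bijection, in particular surjective. Second, the functor $th \maps \fusextcon{LG} \to \fusextth{LG}$ is essentially surjective by construction: by Definition~\ref{def:mfthinstructure} the thin structure underlying any thin fusion extension is, by definition, induced by a fusive superficial connection whose error $1$-form admits a multiplicative and contractible path splitting, and this is precisely the data of an object of $\fusextcon{LG}$ lying over it; hence $\hc 0 th$ is surjective. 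Consequently the composite $\hc 0 th \circ \hc 0 \multtrcon \maps \hc 0 \multgrbcon G \to \hc 0 \fusextth{LG}$ is surjective — which, read through the commutative square of Proposition~\ref{prop:multtrans} (whose left vertical arrow $\hc 0 \multgrbcon G \to \hc 0 \multgrbadcon G$ is surjective by the very definition of $\multgrbadcon G$), is exactly the surjectivity of $\multtr$ recorded just above the corollary.

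To finish, given an arbitrary thin fusion extension $\mathcal{L}$ of $LG$, surjectivity of $\hc 0 th \circ \hc 0 \multtrcon$ furnishes a multiplicative bundle gerbe with connection $(\mathcal{G},\rho,\mathcal{M},\alpha)$ over $G$ with $\hc 0 th\big(\hc 0 \multtrcon([(\mathcal{G},\rho,\mathcal{M},\alpha)])\big) = [\mathcal{L}]$ in $\hc 0 \fusextth{LG}$. Since the composite from $\hc 1 \multgrbcon G$ all the way down to $\ext{LG}$ along \erf{eq:transgressivity} is by construction the transgression of a multiplicative bundle gerbe with connection to a bare central extension, forgetting the fusion product and the thin structure identifies the transgression of $(\mathcal{G},\rho,\mathcal{M},\alpha)$ with $\mathcal{L}$ as a central extension; by Definition~\ref{def:transgressive}, $\mathcal{L}$ is transgressive.

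The only delicate point — hence the \quot{main obstacle}, such as it is — is bookkeeping: one must check that the notion of \quot{transgression to a central extension} in Definition~\ref{def:transgressive} coincides with the bottom composite of \erf{eq:transgressivity}, and that the forgetful functors down to $\ext{LG}$ are compatible with the isomorphisms supplied by Proposition~\ref{prop:multtrans} and Theorem~\ref{th:equivalence}; the genuine content has already been absorbed into those two results. A slightly more self-contained variant bypasses Proposition~\ref{prop:multtrans}: lift $\mathcal{L}$ through the essentially surjective $th$ to an object $\mathcal{L}'\in\fusextcon{LG}$, apply essential surjectivity of $\multtrcon$ to obtain $(\mathcal{G},\rho,\mathcal{M},\alpha)$ with $\multtrcon(\mathcal{G},\rho,\mathcal{M},\alpha)\cong\mathcal{L}'$ in $\fusextcon{LG}$, and then run \erf{eq:transgressivity} to identify the image of $(\mathcal{G},\rho,\mathcal{M},\alpha)$ in $\ext{LG}$ with $\mathcal{L}$.
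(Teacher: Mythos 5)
Your proposal is correct and follows essentially the same route as the paper: surjectivity of $\hc 0 \multtrcon$ from Theorem \ref{th:equivalence}, surjectivity of $\hc 0 th$ by Definition \ref{def:mfthinstructure}, the commutative square of Proposition \ref{prop:multtrans} to conclude surjectivity of $\multtr$, and then the identification of the composite \erf{eq:transgressivity} with transgression to deduce transgressivity via Definition \ref{def:transgressive}. The variant bypassing Proposition \ref{prop:multtrans} is just an unwinding of the same argument, so there is nothing substantive to add.
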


In order to complete the proof of Theorem \ref{th:main}, we have to show that the map $\multtr$ is injective and so establishes a bijection.

\begin{proposition}
\label{prop:multun}
There exists a unique map
$\multun: \hc 0 \fusextth{LG} \to \hc 0 \multgrbadcon G$ such that the diagram
\begin{equation*}
\alxydim{@C=4em@R=\xyst}{\hc 0 \fusextcon {LG} \ar[d]_{\hc 0 th} \ar[r]^-{\hc 0 \multuncon} & \hc 0 \multgrbcon G \ar[d] \\ \hc 0 \fusextth{LG} \ar[r]_-{\multun} & \hc 0 \multgrbadcon G}
\end{equation*}
is commutative.
\end{proposition}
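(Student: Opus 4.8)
The plan is the following. Uniqueness of $\multun$ is immediate: since the functor $th$ is essentially surjective, the map $\hc 0 th$ is surjective, so there is at most one map $\multun$ making the square commute. For existence it suffices to verify that the composite of $\hc 0\multuncon$ with the forgetful map $\hc 0\multgrbcon G\to\hc 0\multgrbadcon G$ is constant on the fibres of $\hc 0 th$; concretely, that whenever $\mathcal{L}_1,\mathcal{L}_2$ are objects of $\fusextcon{LG}$ and $\Phi\maps th(\mathcal{L}_1)\to th(\mathcal{L}_2)$ is an isomorphism of thin fusion extensions, the regressed multiplicative bundle gerbes $\multuncon(\mathcal{L}_1)$ and $\multuncon(\mathcal{L}_2)$ become isomorphic in $\multgrbadcon G$ once their connections are forgotten.

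First I would reduce to a situation with identical underlying data. Write $\mathcal{L}_i=(\mathcal{L}_i,\lambda_i,\nu_i,\kappa_i)$. The isomorphism $\Phi$ is in particular an isomorphism $\mathcal{L}_1\to\mathcal{L}_2$ of central extensions covering $\id_{LG}$, so transporting the superficial connection $\nu_2$ back along $\Phi$ yields a superficial connection $\Phi^{*}\nu_2$ on $\mathcal{L}_1$; since $\Phi$ covers $\id_{LG}$ the error $1$-form is unchanged, so $\kappa_2$ remains a multiplicative contractible path splitting of $\varepsilon_{\Phi^{*}\nu_2}$, and since $\Phi$ is fusion-preserving, $\Phi^{*}\nu_2$ is fusive with respect to $\lambda_1$. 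Thus $(\mathcal{L}_1,\lambda_1,\Phi^{*}\nu_2,\kappa_2)$ is an object of $\fusextcon{LG}$ and $\Phi$ is a connection-preserving, fusion-preserving isomorphism of it onto $\mathcal{L}_2$ there; hence $\multuncon$ carries it to an isomorphism in $\hc 1\multgrbcon G$, a fortiori in $\multgrbadcon G$. Moreover $d^{\Phi^{*}\nu_2}=d^{\nu_1}$ because $\Phi$ is thin. So it remains to treat the case $\mathcal{L}_1=(\mathcal{L},\lambda,\nu_1,\kappa_1)$ and $\mathcal{L}_2=(\mathcal{L},\lambda,\nu_2,\kappa_2)$ with one and the same underlying central extension $\mathcal{L}$, the same fusion product $\lambda$, and $d^{\nu_1}=d^{\nu_2}$.

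The key input for this case comes from the regression machinery of \cite{waldorf10,waldorf11}: regression followed by the forgetful functor $\hc 1\ugrbcon G\to\hc 1\ugrb G$ factors through the thin fusion bundle underlying its argument, so that the bundle gerbe without connection underlying $\uncon(\mathcal{L},\lambda,\nu)$, together with the action of regression on fusion- and thin-preserving bundle morphisms, depends only on $(\mathcal{L},\lambda,d^{\nu})$. Granting this, the bundle gerbes underlying $\multuncon(\mathcal{L}_1)$ and $\multuncon(\mathcal{L}_2)$ are canonically identified, and I would trace that identification through the construction of $\multuncon$: in \erf{eq:multisocon} the isomorphism $\mathcal{M}_i$ becomes, on forgetting connections, the composite of $\uncon(\mu)$ — which depends only on the group multiplication of $\mathcal{L}$, the same on both sides — with the image of $\mathcal{T}_{\kappa_i}$, a canonical isomorphism $\uncon(\trivlin)\to\mathcal{I}$ not involving $\kappa_i$ (the connection $2$-form $\rho_{\kappa_i}$ is simply dropped). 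Hence $\mathcal{M}_1$ and $\mathcal{M}_2$ are intertwined by that identification of the underlying gerbes up to a transformation, and by the connection-free analogues of Lemmata~\ref{lem:canalpha} and \ref{lem:canbeta} — whose proofs go through verbatim, using only that $G$ is connected and that transformations between two fixed isomorphisms of bundle gerbes form a torsor over the locally constant $\ueins$-valued maps — the transformations $\alpha_i$ are the respective unique pentagon-fillers and the intertwiner is automatically compatible with them. This produces an isomorphism $\multuncon(\mathcal{L}_1)\to\multuncon(\mathcal{L}_2)$ in $\multgrbadcon G$, which completes the construction of $\multun$.

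I expect the main obstacle to be precisely the cited factorization in the previous paragraph: that the connection-free part of the regression construction of \cite{waldorf10,waldorf11}, and its multiplicative enhancement used in defining $\multuncon$, genuinely see only the thin structure and the fusion product, so that the entire diagram defining $\multuncon$ can be read off already in $\hc 1\ugrb G$ rather than in $\hc 1\ugrbcon G$. This step requires unwinding the construction of regression and is not formal; everything after it is the bookkeeping indicated above. As a consistency check, combining the resulting map $\multun$ with the already-established surjectivity of $\multtr$, the commutative square of Proposition~\ref{prop:multtrans}, and Theorem~\ref{th:equivalence} shows that $\multtr$ and $\multun$ are mutually inverse bijections, which is the part of Theorem~\ref{th:main} that remains to be proved.
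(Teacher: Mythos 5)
Your overall strategy differs from the paper's: you define $\multun$ by descending the composite $\hc 0 \multuncon$ followed by the forgetful map along the surjection $\hc 0 th$, which requires showing that two objects of $\fusextcon{LG}$ with isomorphic images under $th$ yield isomorphic multiplicative gerbes after forgetting connections. The paper instead defines $\multun$ directly and connection-freely: $\un(\mathcal{L})$ carries a \emph{strictly multiplicative structure} built only from the group structure and the multiplicativity of the fusion product (Sections 2 and 5 of \cite{Waldorf}), and the proof of commutativity consists in showing that the resulting $\mathcal{M}$ and associator $\alpha$ are connection-preserving for the given $(\nu,\kappa)$, so that Lemma \ref{lem:canalpha} (in its connection-preserving form) forces $\alpha$ to coincide with the associator of $\multuncon(\mathcal{L})$. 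Your reduction to the case of a fixed $(\mathcal{L},\lambda)$ with two connections is fine, and your observation that the underlying gerbe, $\un(\mu)$, and the connection-forgotten $\mathcal{T}_{\kappa_i}$ are independent of $(\nu_i,\kappa_i)$ is correct (and is exactly the cited fact that $\uncon$ covers a functor $\un$ without connections, so this is not really the "main obstacle" you single out).

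The genuine gap is your appeal to "connection-free analogues of Lemmata \ref{lem:canalpha} and \ref{lem:canbeta} whose proofs go through verbatim". They do not: for bundle gerbes \emph{without} connection, transformations between two fixed isomorphisms over $G^k$ form a torsor over all smooth $\ueins$-valued maps, not over locally constant ones — locally constant only appears because connection-preserving transformations must preserve the connection on the morphism bundle. Consequently the counting argument in Lemma \ref{lem:canalpha} (reduce to a constant, then cancel occurrences) collapses: two pentagon-satisfying fillers of the same underlying diagram may differ by a nonconstant smooth map $\varepsilon:G^3\to\ueins$ that is merely a cocycle for the simplicial differential $\Delta$, and Lemma \ref{lem:canbeta}'s automatic compatibility of $\beta$ likewise fails. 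So in your final step the associators of $\multuncon(\mathcal{L}_1)$ and $\multuncon(\mathcal{L}_2)$ could a priori differ by a nontrivial such cocycle, and you have no argument that it is a coboundary, i.e.\ that the two multiplicative gerbes without connection are isomorphic. The paper avoids this precisely by exhibiting one canonical, connection-independent associator and proving it is connection-preserving for every admissible $(\nu,\kappa)$, so that uniqueness is invoked only in the connection-preserving setting where it actually holds; to repair your argument you would need this ingredient (or an independent proof that the discrepancy cocycle is trivial), which is the substantive content your proposal is missing.
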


\begin{proof}
Uniqueness of the map follows since $\hc 0 th$ is surjective.  In order to define the map $\multun$, we infer that the regression functor $\uncon$ of \erf{eq:regcon} covers a functor $\un_x: \ufusbun {LX} \to \hc 1 \ugrb X$ on the level without connections \cite[Section 5.1]{waldorf10}. Using only the group structure and the multiplicativity  of the fusion product, the bundle gerbe $\un(\mathcal{L})$ can be equipped with a so-called strictly multiplicative structure, which in turns induces a multiplicative structure, see Sections 2 and 5 of \cite{Waldorf}. It remains to prove that the diagram is commutative. We assume that we have a fusive superficial connection $\nu$ on $\mathcal{L}$ together with a multiplicative path splitting $\kappa$.

By construction, $\multuncon(\mathcal{L})$ and $\multun(th(\mathcal{L}))$ have the same underlying bundle gerbe $\mathcal{G}=\un(\mathcal{L})$, and the same underlying isomorphism $\mathcal{M}=\un(\mu)$.
We show that the associators coincide; in order to do so, we prove that the isomorphism $\mathcal{M}$ and the associator $\alpha$ obtained from the strictly multiplicative structure are connection-preserving; thus, by Lemma \ref{lem:canalpha} $\alpha$ equals the associator of $\multuncon(\mathcal{L})$.

The isomorphism $\mathcal{M}$ is induced from the map $r: Y_{1,2} \to Y_{12}$ between the surjective submersions $Y_{1,2}:= P_1G \times P_1G$ of $\mathcal{G}_1\otimes \mathcal{G}_2$ and $Y_{12}:= G \times P_1G$ of $\mathcal{G}_{12}$, and a lift $R: P_{1,2} \to P_{12}$ of the map $r \times r$ to the total spaces of the principal $\ueins$-bundles of these gerbes. Explicitly, $r(\gamma_1,\gamma_2) := (\gamma_1(1),\gamma_1\gamma_2)$, the bundles are $P_{1,2}|_{(\gamma_1,\gamma_2),(\gamma_1',\gamma_2'))}= \mathcal{L}_{\gamma_1\lop\gamma_1'}\otimes \mathcal{L}_{\gamma_2 \lop \gamma_2'}$ and $P_{12}|_{(g,\gamma),(g',\gamma')}=\mathcal{L}_{\gamma\lop\gamma'}$, and $R$ is multiplication. 
An isomorphism induced from maps $(r,R)$ is connection-preserving with respect to the induced connections $\nu_{1,2}$ on $P_{1,2}$ and $\nu_{12}$ on $P_{12}$, if there exists a 1-form $\kappa \in \Omega^1(Y_{1,2})$ such  that  $R^{*}\nu_{12} + \pr_2^{*}\kappa - \pr_1^{*}\kappa = \nu_{1,2}$ over $Y_{1,2} \times_G Y_{1,2}$. 
\begin{comment}
In general, suppose $f: Y_1 \to Y_2$ and $F: P_1 \to P_2$ form a refinement between bundle gerbes. Suppose the bundle gerbes are equipped with connections composed of curvings $B_1$ and $B_2$ and of connections $\omega_1$ and $\omega_2$. If $B_1 = f^{*}B_2$ and $\omega_1= F^{*}\omega_2$ then we obviously obtain a connection-preserving isomorphism. Suppose instead that there exists a 1-form $\kappa\in\Omega^1(Y_1)$ such that $\omega_1 + \pr_2^{*}\kappa - \pr_1^{*}\kappa = F^{*}\omega_2$ and $f^{*}B_2 = B_1+\mathrm{d}\kappa$. Then we use $Q =\trivlin_{\kappa} \otimes (f \times \id)^{*}P_2 $. We have
\begin{equation*}
\alxydim{}{P_1|_{y_1,y_1'} \otimes Q_{y_1',y_2'} \ar[r]^-{F \otimes \id} &P_2|_{f(y_1),f(y_1')} \otimes \trivlin_{\kappa_{y_1}} \otimes \trivlin_{-\kappa_{y_1'}} \otimes \trivlin_{\kappa_{y_1'}} \otimes P_2|_{f(y_1'),y_2'} \ar[r] & \trivlin_{\kappa_{y_1}} \otimes Q_{y_1,y_2} \otimes P_2|_{y_2,y_2'}}
\end{equation*}  
so this is connection-preserving. We have
\begin{equation*}
B_2|_{y_2} - B_1|_{y_1} = \mathrm{d}\kappa_{y_1}+B_2|_{y_2} -B_2|_{f(y_1)}= \mathrm{d}\kappa_{y_1}+ \mathrm{curv}(P_2)_{f(y_1),y_2} = \mathrm{curv}(Q)_{y_1,y_2}\text{.}
\end{equation*}
If the condition for the curvings is not satisfied, one obtains a 2-form $\rho$ on the base space of the gerbes that compensates the error.
\end{comment}
This is exactly the property of the path splitting $\kappa$; hence $\mathcal{M}$ is connection-preserving. 
\begin{comment}
The associativity of the Lie groups $Y$ and $P$ imply  the strict commutativity of the refinements representing the four 1-isomorphisms in the diagram. In this case, the coherence of companions in double categories provides the required 2-isomorphism $\alpha$, and a general coherence result implies the  pentagon axiom. 
\end{comment}
The condition that $\kappa$ is multiplicative then implies that $\alpha$ is connection-preserving. 
\end{proof}

Now, we have two maps $\multtr$ and $\multun$ on the bottom of the commutative diagrams of Propositions \ref{prop:multtrans} and \ref{prop:multun}, covered along surjective maps by  maps $\hc 0 \multtrcon$  and $\hc 0 \multuncon$ that are inverses of each other according to Theorem \ref{th:equivalence}.
This suffices to show the last part of Theorem \ref{th:main}:

\begin{corollary}
\label{co:maintext}
The maps $\multtr$ and $\multun$ are inverses of each other, and establish a bijection
\begin{equation*}
\hc 0 \multgrbadcon G \cong \hc 0 \fusextth{LG}\text{.}
\end{equation*}
\end{corollary}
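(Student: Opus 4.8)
The plan is to prove the two identities $\multun \circ \multtr = \id$ on $\hc 0 \multgrbadcon G$ and $\multtr \circ \multun = \id$ on $\hc 0 \fusextth{LG}$ by a straightforward diagram chase, using three inputs that are already available: the commutative square of Proposition \ref{prop:multtrans}, the commutative square of Proposition \ref{prop:multun}, and Theorem \ref{th:equivalence}, which says that $\hc 0 \multtrcon$ and $\hc 0 \multuncon$ are mutually inverse bijections. Write $v\maps \hc 0 \multgrbcon G \to \hc 0 \multgrbadcon G$ for the left vertical map in the square of Proposition \ref{prop:multtrans}, which is the same as the right vertical map in the square of Proposition \ref{prop:multun}; recall that $v$ is surjective by definition of $\multgrbadcon G$, and that $\hc 0 th$ is surjective because $th$ is essentially surjective.

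First I would check $\multun \circ \multtr = \id$. Given a class $x \in \hc 0 \multgrbadcon G$, choose $\tilde x \in \hc 0 \multgrbcon G$ with $v(\tilde x) = x$. By Proposition \ref{prop:multtrans} we have $\multtr(x) = (\hc 0 th)\big((\hc 0 \multtrcon)(\tilde x)\big)$, and by Proposition \ref{prop:multun}, read in the form $\multun \circ (\hc 0 th) = v \circ (\hc 0 \multuncon)$, we obtain $\multun(\multtr(x)) = v\big((\hc 0 \multuncon)(\hc 0 \multtrcon)(\tilde x)\big)$, which by Theorem \ref{th:equivalence} equals $v(\tilde x) = x$. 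The reverse identity is symmetric: given $y \in \hc 0 \fusextth{LG}$, pick $\tilde y$ with $(\hc 0 th)(\tilde y) = y$; Proposition \ref{prop:multun} gives $\multun(y) = v\big((\hc 0 \multuncon)(\tilde y)\big)$, and Proposition \ref{prop:multtrans}, read as $\multtr \circ v = (\hc 0 th) \circ (\hc 0 \multtrcon)$, together with Theorem \ref{th:equivalence}, yields $\multtr(\multun(y)) = (\hc 0 th)\big((\hc 0 \multtrcon)(\hc 0 \multuncon)(\tilde y)\big) = (\hc 0 th)(\tilde y) = y$. Hence $\multtr$ and $\multun$ are inverse bijections, establishing $\hc 0 \multgrbadcon G \cong \hc 0 \fusextth{LG}$; combined with Proposition \ref{prop:onlyif} and Corollary \ref{co:if} this completes Theorem \ref{th:main}, and in the compact case one composes with $\multgrbadcon G = \multgrb G$ and the classification \erf{eq:multgrbclass} to identify both sides with $\h^4(BG,\Z)$.

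There is no genuinely hard step here; the only point that deserves a word of care is that the chase is independent of the chosen lifts $\tilde x$ and $\tilde y$. This is automatic: $\multtr$, $\multun$, $\hc 0 \multtrcon$, $\hc 0 \multuncon$ and $v$ are honest single-valued maps and the squares commute, so the value computed is forced regardless of which preimage is taken. All of the substantive work has already been carried out — the well-definedness of $\multtr$ and $\multun$ in Propositions \ref{prop:multtrans} and \ref{prop:multun} and the equivalence of categories in Theorem \ref{th:equivalence} — so this final corollary is purely formal bookkeeping on top of those results.
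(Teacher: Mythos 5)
Your proof is correct and is essentially the paper's own argument: the paper's (very terse) proof of this corollary is exactly that $\multtr$ and $\multun$ sit under the two commutative squares of Propositions \ref{prop:multtrans} and \ref{prop:multun}, covered along the surjective vertical maps by $\hc 0 \multtrcon$ and $\hc 0 \multuncon$, which are mutually inverse by Theorem \ref{th:equivalence}. You have merely written out the standard lift-and-chase that the paper leaves implicit, so there is nothing to add.
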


\setsecnumdepth{1}

\section{Segal-Witten reciprocity}

\label{sec:segalwitten}

Let $G$ be a Lie group and $\mathcal{L}$ be a central extension of $LG$. 
Let $\Sigma$ be a compact oriented surface with boundary components $b_1,...,b_k \subset \partial\Sigma$ parameterized by orientation-preserving diffeomorphisms  $f_i: S^1 \to b_i$. We have induced maps $r_i\maps C^{\infty}(\Sigma,G) \to LG$ defined by $r_i(\phi) := \phi \circ f_i$. We let  $\mathcal{L}_{\Sigma}$ denote the Baer sum of
the central extensions $r_i^{*}\mathcal{L}$ of $C^{\infty}(\Sigma,G)$,
\begin{equation*}
\mathcal{L}_{\Sigma} := r_1^{*}\mathcal{L} \otimes ... \otimes r_k^{*}\mathcal{L}\text{.}
\end{equation*} 
If $\Sigma$  is obtained from two surfaces $\Sigma_1$ and $\Sigma_2$ by gluing along some boundary components, and $\rho_i \maps C^{\infty}(\Sigma,G) \to C^{\infty}(\Sigma_i,G)$ are the restriction maps, then we have an isomorphism
\begin{equation*}
\rho : \mathcal{L}_{\Sigma} \to \rho_1^{*}\mathcal{L}_{\Sigma_1} \otimes \rho_2^{*}\mathcal{L}_{\Sigma_2}\text{.}
\end{equation*}

\begin{definition}
\label{def:smoothrp}
A central extension $\mathcal{L}$ of $LG$ has the \emph{smooth reciprocity property}, if  there exists a family $\{s_{\Sigma}\}$ of splittings $s_{\Sigma}$ of $\mathcal{L}_{\Sigma}$ for every compact oriented surface $\Sigma$, satisfying the gluing law
\begin{equation*}
\rho \circ s_{\Sigma} = \rho_1^{*}s_{\Sigma_1} \otimes \rho_2^{*}s_{\Sigma_2}\text{,}
\end{equation*}
whenever $\Sigma$ is obtained from two surfaces $\Sigma_1$ and $\Sigma_2$ by gluing along some boundary components. 
\end{definition}

By splitting we mean a smooth map $s_{\Sigma}: C^{\infty}(\Sigma,G) \to \mathcal{L}_{\Sigma}$ such that $p \circ s_{\Sigma} = \id_{C^{\infty}(\Sigma,G)}$, where $p$ is the projection $p:\mathcal{L}_{\Sigma} \to C^{\infty}(\Sigma,G)$.

\begin{remark}
\begin{enumerate}[(i)]

\item
Above definition  is derived from  a definition due to Brylinski and McLaughlin \cite{brylinski4} and a gluing law from \cite{Brylinski1996}; in these references the definition  is attributed to Segal \cite{segal1}.   

\item
There is a \emph{complex} version of the reciprocity property, where $G$ is a complex Lie group, $\Sigma$ is a Riemann surface, and $C^{\infty}(\Sigma,G)$ is replaced by $\mathrm{Hol}(\Sigma,G)$, the holomorphic maps from $\Sigma$ to $G$ \cite{Brylinski1996}. 

\item
The smooth reciprocity property is a property of the underlying principal $\ueins$-bundle of $\mathcal{L}$, i.e. the group structure is neglected. For the complex reciprocity property one additionally assumes that the sections $s_{\Sigma}$ are group homomorphisms.

\end{enumerate}
\end{remark}

\noindent
The Segal-Witten reciprocity law states that every transgressive central extension of the loop group of a complex Lie group has the complex reciprocity property.  
The following result is a weaker version  adapted to the \emph{smooth} reciprocity property.

\begin{theorem}
\label{th:recprop}
Every transgressive central extension of the loop group of any Lie group has the  smooth reciprocity property.
\end{theorem}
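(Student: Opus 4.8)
The plan is to reduce the smooth reciprocity property to a statement about surface holonomy of the multiplicative bundle gerbe with connection $(\mathcal{G},\rho,\mathcal{M},\alpha)$ whose transgression is $\mathcal{L}$. The key observation is that for a compact oriented surface $\Sigma$ with parameterized boundary components $b_1,\dots,b_k$, and a smooth map $\phi\maps\Sigma\to G$, the restrictions $r_i(\phi)=\phi\circ f_i$ are loops in $LG$, and the transgression construction gives a canonical identification of the fibre $(\mathcal{L}_\Sigma)_\phi = \bigotimes_i (\trcon_{\mathcal{G}})_{r_i(\phi)}$ with the set of ``trivializations of $\phi^{*}\mathcal{G}$ along the boundary.'' The point is that an actual trivialization of $\phi^{*}\mathcal{G}$ over all of $\Sigma$ restricts to such a boundary trivialization, and hence defines a canonical element of $(\mathcal{L}_\Sigma)_\phi$ once one corrects by the integral of the connection $2$-form of the trivialization over $\Sigma$. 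Since $\mathcal{G}$ is a bundle gerbe (not just a class) and $\Sigma$ is a surface, $\phi^{*}\mathcal{G}$ is always trivializable; different choices of trivialization differ by a principal $\ueins$-bundle with connection over $\Sigma$, and the holonomy of such a bundle around $\partial\Sigma$ is exactly compensated by the change in the surface integral (Stokes). So the element is well-defined and smooth in $\phi$, giving the splitting $s_\Sigma$.

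\textbf{Construction of the splittings.} Concretely, for each $\Sigma$ choose (locally in $\phi$, using that $\mathcal{G}$ is trivializable pulled back along any smooth map from a surface) a trivialization $\mathcal{T}\maps\phi^{*}\mathcal{G}\to\mathcal{I}_{\omega_\phi}$ with $\omega_\phi\in\Omega^2(\Sigma)$. Its restriction $\partial_i\mathcal{T}$ to $b_i\cong S^1$ is an element of $(\trcon_{\mathcal{G}})_{r_i(\phi)}$, and we set
\begin{equation*}
s_\Sigma(\phi) := \Big(\bigotimes_i \partial_i\mathcal{T}\Big)\cdot \exp\!\Big(-2\pi\im\int_\Sigma \omega_\phi\Big)\in (\mathcal{L}_\Sigma)_\phi\text{.}
\end{equation*}
Independence of the choice of $\mathcal{T}$ follows exactly as in the argument for the isomorphism $\varphi\maps\wzwmodel\to\trcon_{\gbas}$ given in the excerpt: two trivializations differ by a bundle $P$ with connection over $\Sigma$ with $\mathrm{curv}(P)=\omega'_\phi-\omega_\phi$, the boundary restrictions change by $\prod_i\mathrm{Hol}_P(b_i)$, and by Stokes $\prod_i\mathrm{Hol}_P(b_i)=\exp(2\pi\im\int_\Sigma\mathrm{curv}(P))$, which cancels the change of the surface integral. (Here one uses that $\partial\Sigma=\bigsqcup_i b_i$ with the boundary orientation, matching the parameterizations $f_i$.) Smoothness is clear since integration of differential forms depends smoothly on parameters, and the boundary-restriction maps in transgression are smooth. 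Note this step uses only the underlying gerbe with connection $\mathcal{G}$ and not the multiplicative structure $(\mathcal{M},\alpha)$ — consistent with the remark that the reciprocity property ignores the group structure.

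\textbf{The gluing law.} Suppose $\Sigma$ is glued from $\Sigma_1$ and $\Sigma_2$ along some boundary components, with restriction maps $\rho_j\maps C^\infty(\Sigma,G)\to C^\infty(\Sigma_j,G)$. Given $\phi\in C^\infty(\Sigma,G)$, choose a trivialization $\mathcal{T}$ of $\phi^{*}\mathcal{G}$ over $\Sigma$ and let $\mathcal{T}_j:=\mathcal{T}|_{\Sigma_j}$ be its restrictions, with $2$-forms $\omega|_{\Sigma_j}$. The boundary components of $\Sigma_j$ are either external (a boundary component of $\Sigma$) or internal (a glued circle $c$). On the external components the restrictions $\partial_i\mathcal{T}_j$ agree with $\partial_i\mathcal{T}$; on each internal circle $c$, $\Sigma_1$ and $\Sigma_2$ induce opposite orientations, so $\partial_c\mathcal{T}_1$ and $\partial_c\mathcal{T}_2$ are inverse elements of $(\trcon_{\mathcal{G}})_{\phi|_c}\otimes(\trcon_{\mathcal{G}})_{\overline{\phi|_c}}$ under the canonical pairing, hence cancel in the Baer sum. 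Meanwhile $\int_\Sigma\omega = \int_{\Sigma_1}\omega|_{\Sigma_1}+\int_{\Sigma_2}\omega|_{\Sigma_2}$ since $\Sigma=\Sigma_1\cup\Sigma_2$. Putting these together gives exactly $\rho\circ s_\Sigma(\phi) = \rho_1^{*}s_{\Sigma_1}(\phi)\otimes\rho_2^{*}s_{\Sigma_2}(\phi)$. The main obstacle I expect is purely bookkeeping: carefully setting up the canonical identification between the fibre of $\trcon_{\mathcal{G}}$ over a loop and boundary trivializations of pulled-back gerbes (this is where one invokes the definition of the transgression functor from \cite{waldorf5,waldorf10}), and tracking orientation conventions so that the internal circles cancel with the correct signs and the external ones survive with the correct orientation. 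Once the identification is in place, independence of choices and the gluing law are both immediate consequences of the bundle-with-connection case of Stokes' theorem, just as in the $\wzwmodel\to\trcon_{\gbas}$ computation already carried out in the text.
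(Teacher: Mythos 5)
Your proposal is correct and takes essentially the same approach as the paper: there, $s_{\Sigma}(\phi)$ is defined as $(f_1^{*}\mathcal{T}_1,\dots,f_k^{*}\mathcal{T}_k)$ multiplied by the surface holonomy of $\mathcal{G}$ with boundary conditions, which is computed from a global trivialization $\mathcal{S}\colon\phi^{*}\mathcal{G}\to\mathcal{I}_{\omega}$ and the difference bundles $T_i$ on the boundary, and this reduces exactly to your formula when the boundary trivializations are taken to be the restrictions of $\mathcal{S}$. The only cosmetic difference is that the paper deduces the gluing law by citing the general gluing property of surface holonomy, whereas you verify it by hand via additivity of $\int_{\Sigma}\omega$ and cancellation of the restrictions on internal circles --- the same underlying Stokes-type argument.
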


\begin{proof}
Let $(\mathcal{G},\rho,\mathcal{M},\alpha)$ be a multiplicative bundle gerbe with connection over $G$, and let $\mathcal{L}$ be the corresponding central extension. Suppose $\Sigma$ is a compact oriented surface, $\phi: \Sigma \to G$ is a smooth map, and $\mathcal{T}_i$ is a trivialization of $\phi^{*}\mathcal{G}|_{b_i}$ for every boundary component $b_i$. The \emph{surface holonomy} $\mathcal{A}_{\Sigma}(\phi,\mathcal{T}_1,...,\mathcal{T}_k)\in \ueins$ of $\mathcal{G}$ with boundary conditions $\mathcal{T}_1,...,\mathcal{T}_k$ is defined
in the following way. Choose a trivialization $\mathcal{S}:\phi^{*}\mathcal{G} \to \mathcal{I}_{\omega}$. For each boundary component, we have two trivializations that differ by a $\ueins$-bundle $T_i$ with connection over $b_i$, i.e. $\mathcal{S}|_{b_i} \cong \mathcal{T}_i \otimes T_i$ Then,
\begin{equation*}
\mathcal{A}_{\Sigma}(\phi,\mathcal{T}_1,...,\mathcal{T}_k) := \exp \left (  \int_{\Sigma}\omega \right ) \cdot \prod_{i=1}^{k} \mathrm{Hol}_{T_i}(b_i)^{-1}\text{.}
\end{equation*}
With boundary parameterizations $f_i:S^1 \to b_i$ we have $f_i^{*}\mathcal{T}_i \in (r_i^{*}\mathcal{L})_{\phi}$ and so $(f_1^{*}\mathcal{T}_1,...,f_k^{*}\mathcal{T}_k)\in \mathcal{L}_{\Sigma}|_{\phi}$. Now, a splitting $s_{\Sigma}:C^{\infty}(\Sigma,G) \to \mathcal{L}_{\Sigma}$ is defined by
\begin{equation*}
s_{\Sigma}(\phi) = (f_1^{*}\mathcal{T}_1,...,f_k^{*}\mathcal{T}_k)\cdot \mathcal{A}_{\Sigma}(\phi,\mathcal{T}_1,...,\mathcal{T}_k)\text{.}
\end{equation*}
This splitting satisfies the gluing property since surface holonomy has a more general gluing property, see 
\cite[Proposition 3.1]{carey2} and \cite[Lemma 3.3.3 (c)]{waldorf10}. \end{proof}

\begin{remark}
\begin{enumerate}[(i)]

\item
Theorem \ref{th:recprop} is proved in \cite[Theorem 6.2.1]{brylinski1} and \cite[Theorem 5.9]{brylinski4} for simply-connected Lie groups, and in the latter reference it is claimed that it generalizes to arbitrary Lie groups by a (left out) computation in simplicial Deligne cohomology.

\begin{comment}
\item 
The splitting $s_{\Sigma}$  of Theorem \ref{th:recprop} depends on the connection on the multiplicative bundle gerbe $\mathcal{G}$.
\end{comment}

\item
By Theorem \ref{th:main} the transgressive central extensions are precisely the  thin fusion extension. For a given  thin fusion extension $\mathcal{L}$ the splitting $s_{\Sigma}$ can be constructed directly from a choice of a fusive superficial connection $\nu$ that integrates the given thin structure. Indeed, the surface holonomy $\mathcal{A}_{\Sigma}(\mathcal{T}_1,...,\mathcal{T}_k)$ can be defined directly from $\nu$ and the  fusion product, see \cite[Section 5.3]{waldorf10}.

\end{enumerate}
\end{remark}

\noindent
It is an interesting question whether or not the sections $s_{\Sigma}$  can be chosen  multiplicative, i.e. to be group homomorphisms.
In \cite{brylinski4} it is claimed that this is possible for transgressive central extensions of loop groups of arbitrary Lie groups. In \cite{brylinski3} on pages 2 and 21 Brylinski withdraws that statement, and claims that only the sections for the \emph{complex} reciprocity property can be chosen multiplicative. Based on this claim,  it is proved in  \cite{Brylinski1996}  that the complex reciprocity property  characterizes transgressive central extensions of the loop group of  a connected  semisimple complex Lie group. 
\begin{comment}
In  \cite{Brylinski1996} Brylinski and McLaughlin claim  that for  connected complex semisimple Lie groups every central extension that has the complex reciprocity property is transgressive. In order to prove this, they construct a  gerbe $C$ over $G$ from a central extension. In the definition of a multiplicative structure on $C$ they use that the section $s_{\Sigma}$ is multiplicative. Namely, the claim that a certain lifting gerbe $C_{\Sigma}$ has a global object. That lifting gerbe describes the lifting of the structure group of a certain $C^{\infty}(\Sigma,G)$-bundle $Q_\Sigma$ from $C^{\infty}(\Sigma,G)$ to $\mathcal{L}_{\Sigma}$. But the mere section $s_{\Sigma}$ will not induce such a lifting - only a multiplicative section would.   
\end{comment}

In the present paper we make no claims about complex Lie groups.  In the following we only show via examples that there exist transgressive central extensions for which the sections $s_{\Sigma}$ cannot be chosen multiplicative (Example \ref{ex:nonmultsplitting}), and that there exist central extensions that have the smooth reciprocity property but are not transgressive (Example \ref{ex:nonchar}). For preparation, we need the following.

\begin{proposition}
\label{prop:nonmult}
The splitting $s_{\Sigma}$ constructed in Theorem \ref{th:recprop}  satisfies
\begin{equation*}
s_{\Sigma}(\phi_1\phi_2)= s_{\Sigma}(\phi_1)\cdot s_{\Sigma}(\phi_2) \cdot  \exp 2\pi \im \left (- \int_{\Sigma} (\phi_1,\phi_2)^{*}\rho  \right ) \text{,}
\end{equation*}
where $\rho\in \Omega^2(G \times G)$ is the 2-form of the multiplicative gerbe.  
\end{proposition}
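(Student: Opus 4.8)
The plan is to compute how the splitting $s_{\Sigma}$ transforms under the pointwise product $\phi_1\phi_2$ on $C^{\infty}(\Sigma,G)$, by unwinding the definition of $s_{\Sigma}$ given in the proof of Theorem \ref{th:recprop} and exploiting the multiplicative isomorphism $\mathcal{M}$ of the multiplicative bundle gerbe. Recall $s_{\Sigma}(\phi)=(f_1^{*}\mathcal{T}_1,\dots,f_k^{*}\mathcal{T}_k)\cdot \mathcal{A}_{\Sigma}(\phi,\mathcal{T}_1,\dots,\mathcal{T}_k)$ for chosen boundary trivializations $\mathcal{T}_i$ of $\phi^{*}\mathcal{G}|_{b_i}$. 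So the first step is: given $\phi_1,\phi_2$ with boundary trivializations $\mathcal{T}^{(1)}_i$ and $\mathcal{T}^{(2)}_i$ of $\phi_1^{*}\mathcal{G}|_{b_i}$ and $\phi_2^{*}\mathcal{G}|_{b_i}$, build a natural candidate trivialization $\mathcal{T}^{(12)}_i$ of $(\phi_1\phi_2)^{*}\mathcal{G}|_{b_i}$ out of them, using the isomorphism $\mathcal{M}:\mathcal{G}_1\otimes\mathcal{G}_2\to\mathcal{G}_{12}\otimes\mathcal{I}_{\rho}$ pulled back along $(\phi_1,\phi_2)|_{b_i}$. Concretely $\mathcal{T}^{(12)}_i$ is the composite $(\phi_1\phi_2)^{*}\mathcal{G}|_{b_i}\cong (\phi_1,\phi_2)^{*}\mathcal{G}_{12}|_{b_i}\otimes \mathcal{I}_{-(\phi_1,\phi_2)^{*}\rho}|_{b_i} \xrightarrow{(\phi_1,\phi_2)^{*}\mathcal{M}^{-1}\otimes\id}\phi_1^{*}\mathcal{G}|_{b_i}\otimes\phi_2^{*}\mathcal{G}|_{b_i}\otimes\mathcal{I}_{-(\phi_1,\phi_2)^{*}\rho}|_{b_i}\xrightarrow{\mathcal{T}^{(1)}_i\otimes\mathcal{T}^{(2)}_i\otimes\id}\mathcal{I}_{\omega_i}$; this is exactly the construction of the product trivialization used in the definition of the Fr\'echet Lie group structure on $\tr_{\mathcal{G}}$ (it mirrors the displayed composite in the basic-gerbe example). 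The key fact to record here is that in the central extension, $f_i^{*}\mathcal{T}^{(12)}_i = (f_i^{*}\mathcal{T}^{(1)}_i)\cdot(f_i^{*}\mathcal{T}^{(2)}_i)$ as elements of $\mathcal{L}$, because the group structure on $\mathcal{L}=\tr_{\mathcal{G}}$ is precisely transgression of $\mathcal{M}$ composed with the trivialization $t_{\rho}$.

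The second step is the bookkeeping of the surface-holonomy scalar. Using $\mathcal{T}^{(12)}_i$ as boundary conditions, I would compute $\mathcal{A}_{\Sigma}(\phi_1\phi_2,\mathcal{T}^{(12)}_1,\dots,\mathcal{T}^{(12)}_k)$ by choosing bulk trivializations $\mathcal{S}^{(1)}:\phi_1^{*}\mathcal{G}\to\mathcal{I}_{\omega^{(1)}}$ and $\mathcal{S}^{(2)}:\phi_2^{*}\mathcal{G}\to\mathcal{I}_{\omega^{(2)}}$, and forming the bulk trivialization $\mathcal{S}^{(12)}$ of $(\phi_1\phi_2)^{*}\mathcal{G}$ by the same recipe as for the boundary, now over all of $\Sigma$: this produces curving $\omega^{(12)}=\omega^{(1)}+\omega^{(2)}-(\phi_1,\phi_2)^{*}\rho$. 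Comparing $\mathcal{S}^{(12)}|_{b_i}$ with $\mathcal{T}^{(12)}_i$, the discrepancy bundles $T^{(12)}_i$ are just tensor products $T^{(1)}_i\otimes T^{(2)}_i$ of the discrepancy bundles for $\phi_1,\phi_2$ (the $\mathcal{M}$- and $\mathcal{I}_{-(\phi_1,\phi_2)^{*}\rho}$-factors cancel between bulk and boundary), so their holonomies multiply. Putting the pieces into the defining formula for $\mathcal{A}_\Sigma$ gives
\begin{equation*}
\mathcal{A}_{\Sigma}(\phi_1\phi_2,\mathcal{T}^{(12)}_\bullet)=\exp\Bigl(2\pi\im\int_{\Sigma}\omega^{(12)}\Bigr)\prod_i\mathrm{Hol}_{T^{(12)}_i}(b_i)^{-1}=\mathcal{A}_{\Sigma}(\phi_1,\mathcal{T}^{(1)}_\bullet)\cdot\mathcal{A}_{\Sigma}(\phi_2,\mathcal{T}^{(2)}_\bullet)\cdot\exp\Bigl(-2\pi\im\int_{\Sigma}(\phi_1,\phi_2)^{*}\rho\Bigr),
\end{equation*}
where I should be careful about the normalization convention $\exp(\int\omega)$ versus $\exp(2\pi\im\int\omega)$ used in the proof of Theorem \ref{th:recprop}; I would state it consistently with that proof.

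The third step is to note that $s_{\Sigma}$ is independent of the choice of boundary trivializations (this is implicit in Theorem \ref{th:recprop}: the two factors in the product $(f_i^{*}\mathcal{T}_i)\cdot\mathcal{A}_\Sigma(\phi,\mathcal{T}_\bullet)$ compensate each other under a change $\mathcal{T}_i\rightsquigarrow\mathcal{T}_i\otimes L_i$ by a line bundle $L_i$ over $b_i$, since $f_i^{*}\mathcal{T}_i$ changes by $\mathrm{Hol}_{L_i}(b_i)$ and $\mathcal{A}_\Sigma$ changes by $\mathrm{Hol}_{L_i}(b_i)^{-1}$). Hence $s_{\Sigma}(\phi_1\phi_2)$ computed with the natural choice $\mathcal{T}^{(12)}_\bullet$ equals $s_\Sigma(\phi_1\phi_2)$ computed with any choice, and combining with the two previous steps:
\begin{equation*}
s_{\Sigma}(\phi_1\phi_2)=(f_1^{*}\mathcal{T}^{(12)}_1,\dots,f_k^{*}\mathcal{T}^{(12)}_k)\cdot\mathcal{A}_{\Sigma}(\phi_1\phi_2,\mathcal{T}^{(12)}_\bullet)=s_{\Sigma}(\phi_1)\cdot s_{\Sigma}(\phi_2)\cdot\exp 2\pi\im\Bigl(-\int_{\Sigma}(\phi_1,\phi_2)^{*}\rho\Bigr),
\end{equation*}
as claimed. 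The main obstacle is the careful verification in steps one and two that the various $\mathcal{M}^{-1}$-twists and $\mathcal{I}_{-(\phi_1,\phi_2)^{*}\rho}$-factors introduced for $\phi_1\phi_2$ cancel appropriately both on the boundary (so that $f_i^{*}\mathcal{T}^{(12)}_i$ really is the product in $\mathcal{L}$) and between bulk and boundary (so that the only surviving extra contribution is the integral of $\rho$); this is precisely the sort of diagram-chase already carried out in the commented-out computation of the group homomorphism property of $\varphi:\wzwmodel\to\trcon_{\gbas}$ earlier in the paper, and I would model the argument on it, invoking the gluing property of surface holonomy (\cite[Proposition 3.1]{carey2}, \cite[Lemma 3.3.3 (c)]{waldorf10}) where a clean statement is needed rather than redoing the local computation.
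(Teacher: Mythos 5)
Your proposal is correct and follows essentially the same route as the paper's own proof: form the product bulk trivialization of $(\phi_1\phi_2)^{*}\mathcal{G}$ via $(\phi_1,\phi_2)^{*}\mathcal{M}^{-1}$ (producing the curving $\omega_1+\omega_2-(\phi_1,\phi_2)^{*}\rho$), form the product boundary trivializations the same way, observe that the boundary discrepancy bundles tensor so the holonomies multiply, and use that $f_i^{*}\mathcal{T}_i^{(12)}$ is exactly the product of $f_i^{*}\mathcal{T}_i^{(1)}$ and $f_i^{*}\mathcal{T}_i^{(2)}$ in $\mathcal{L}=\tr_{\mathcal{G}}$. The only addition is your explicit remark on independence of the boundary trivializations, which the paper leaves implicit in the well-definedness of $s_{\Sigma}$; this does not change the argument.
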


\begin{proof}
Suppose $\phi_1,\phi_2:\Sigma \to G$. We may have chosen trivializations $\mathcal{S}_j: \phi_j^{*}\mathcal{G} \to \mathcal{I}_{\omega_j}$. Then, we get another trivialization $\mathcal{S}$ defined by
\begin{equation*}
\alxydim{@C=5em}{(\phi_1\phi_2)^{*}\mathcal{G} \ar[r]^-{(\phi_1,\phi_2)^{*}\mathcal{M}^{-1}} & \phi_1^{*}\mathcal{G} \otimes \phi_2^{*}\mathcal{G}  \otimes \mathcal{I}_{-(\phi_1,\phi_2)^{*}\rho} \ar[r]^-{\mathcal{S}_1 \otimes \mathcal{S}_2} & \mathcal{I}_{\omega_1+\omega_2-(\phi_1,\phi_2)^{*}\rho}\text{.}}
\end{equation*}
We further have trivializations $\mathcal{T}_{ij}: \phi_j^{*}\mathcal{G}|_{b_i} \to \mathcal{I}_0$ for $j=1,2$ and $i=1,...,k$ and the difference bundles $\mathcal{S}_j|_{b_i} \cong \mathcal{T}_{ij} \otimes T_{ij}$. We consider the trivialization $\mathcal{T}_i$ defined by
\begin{equation*}
\alxydim{@C=5em}{(\phi_1\phi_2)^{*}\mathcal{G}|_{b_i} \ar[r]^-{(\phi_1,\phi_2)^{*}\mathcal{M}^{-1}} & \phi_1^{*}\mathcal{G}|_{b_i} \otimes \phi_2^{*}\mathcal{G}|_{b_i}   \ar[r]^-{\mathcal{T}_{i1} \otimes \mathcal{T}_{i2}} & \mathcal{I}_{0}\text{,}}
\end{equation*}
and obtain
\begin{equation*}
\mathcal{S}|_{b_i} =(\mathcal{S}_1|_{b_i}\otimes \mathcal{S}_2|_{b_i}) \circ (\phi_1,\phi_2)^{*}\mathcal{M}^{-1}|_{b_i} \cong (\mathcal{T}_{i1} \otimes \mathcal{T}_{i2})\circ (\phi_1,\phi_2)^{*}\mathcal{M}^{-1}|_{b_i} \otimes T_{i1} \otimes T_{i2} = \mathcal{T}_i \otimes T_{i1} \otimes T_{i2}\text{.}
\end{equation*}
Thus,
\begin{multline}
\label{eq:shprod}
\mathcal{A}_{\Sigma}(\phi_1\phi_2,\mathcal{T}_1,...,\mathcal{T}_k) = \exp 2\pi \im \left (   \int_{\Sigma}\omega_1+\omega_2-(\phi_1,\phi_2)^{*}\rho \right ) \cdot \prod_{i=1}^{k} \mathrm{Hol}_{T_{i1} \otimes T_{i2}}(b_i)^{-1}
\\
= \mathcal{A}_{\Sigma}(\phi_1,\mathcal{T}_{11},...,\mathcal{T}_{k1}) \cdot \mathcal{A}_{\Sigma}(\phi_1,\mathcal{T}_{12},...,\mathcal{T}_{k2}) \cdot \exp 2\pi \im  \left ( \int_{\Sigma} -(\phi_1,\phi_2)^{*}\rho  \right )\text{.} 
\end{multline}
The product of $f_i^{*}\mathcal{T}_{i1}\in \mathcal{L}_{r_i(\phi_1)}$ and $f_i^{*}\mathcal{T}_{i2}\in \mathcal{L}_{r_i(\phi_2)}$ is $f_i^{*}\mathcal{T}_i$. The claim follows by computing $s_{\Sigma}(\phi_1\phi_2)$ using \erf{eq:shprod}. 
\begin{comment}
Indeed, we get
\begin{eqnarray*}
s_{\Sigma}(\phi_1\phi_2) 
&=& (f_1^{*}\mathcal{T}_1,...,f_k^{*}\mathcal{T}_k)\cdot \mathcal{A}_{\Sigma}(\phi_1\phi_2,\mathcal{T}_1,...,\mathcal{T}_k)
\\&=& (f_1^{*}\mathcal{T}_{11},...,f_k^{*}\mathcal{T}_{k1})\cdot (f_1^{*}\mathcal{T}_{12},...,f_k^{*}\mathcal{T}_{k2})\cdot \mathcal{A}_{\Sigma}(\phi_1,\mathcal{T}_{11},...,\mathcal{T}_{k1}) \\&&\qquad\cdot \mathcal{A}_{\Sigma}(\phi_1,\mathcal{T}_{12},...,\mathcal{T}_{k2}) \cdot \exp \left (  \int_{\Sigma} -(\phi_1,\phi_2)^{*}\rho  \right ) 
 \\&=& s_{\Sigma}(\phi_1)\cdot s_{\Sigma}(\phi_2) \cdot  \exp \left (  \int_{\Sigma} -(\phi_1,\phi_2)^{*}\rho  \right ) \text{.}
\end{eqnarray*}
\end{comment}
\end{proof}

The meaning of the calculation of Proposition \ref{prop:nonmult} is that
\begin{equation*}
\eta: C^{\infty}(\Sigma,G) \times C^{\infty}(\Sigma,G) \to \ueins : (\phi_1,\phi_2) \mapsto \exp 2\pi\im \left (  \int_{\Sigma} (\phi_1,\phi_2)^{*}\rho  \right )  
\end{equation*}
is a classifying 2-cocycle for the (topologically trivializable) central extension $\mathcal{L}_{\Sigma}$. Note that $\eta$  is a coboundary if and only if $\mathcal{L}_{\Sigma}$ has a multiplicative section. 
\begin{comment}
Let us assume $\eta = \Delta\varepsilon$. Since $\eta$ is normalized, $\varepsilon$ must be normalized, too:
$1=\eta(1,1)=\varepsilon(1)\varepsilon(1)\varepsilon(1)^{-1}=\varepsilon(1)\text{.}$
\end{comment}
Also note that if $G$ is abelian and $\eta$ is a coboundary, then $\eta$ is symmetric. 

\begin{example}
\label{ex:nonmultsplitting}
Consider $G=\ueins$ and the central extension $\mathcal{L}=\mathcal{L}_P=\mathcal{L}_{\Z}(-1,0,1)$, i.e. the transgression of the trivial gerbe with multiplicative structure given by the Poincaré bundle $P$ over $T=\ueins\times\ueins$, see Examples \ref{ex:trivial} and \ref{ex:notthinfusion}. By Theorem \ref{th:recprop}, $\mathcal{L}$  has the smooth reciprocity property.  Here, $\rho=\pr_1^{*}\theta\wedge \pr_2^{*}\theta\in\Omega^2(T)$.  It is easy to see that we have $s^{*} \rho = -\rho$, where $s: T \to T:(z,z')\mapsto (z',z)$. This shows that $\eta$ is \emph{skew}-symmetric. But $\eta$ can only be symmetric and skew-symmetric if $\eta =1$. In order to see that this is not the case, we note that 
\begin{equation*}
\int_{T} \rho =1\text{.}
\end{equation*}
\begin{comment}
Indeed, $\rho$ represents a non-trivial element in $\h^2_{dR}(S^1 \times S^1)$ whose pairing with the fundamental class of $S^1 \times S^1$ must be non-trivial. 
\end{comment}
Then, there must be an embedding $\phi: D^2 \to T$ of a disc such that $\int \phi^{*}\rho \notin \Z$. Defining $\phi_1,\phi_2$ by composing with the projections $\pr_1,\pr_2:T\to\ueins$,  we get  $\eta(\phi_1,\phi_2)\neq 0$. This shows that $\eta$ is not symmetric, so that $\mathcal{L}_{D^2}$ has no multiplicative section. Hence,  $\mathcal{L}$ is a transgressive central extension that has the smooth reciprocity property but does not admit multiplicative sections. \end{example}

\begin{example}
\label{ex:nonchar}
We let $G=\ueins$ and $\mathcal{L}_{\R}(\gamma)$ be the basic central extension of $L\ueins$ constructed in Example \ref{ex:notthinfusion} depending on $\gamma\in \R$. It is clear that $\mathcal{L}_{\R}(\gamma)$ has the smooth reciprocity property, because it is topologically trivial. On the other hand, we have seen in Example \ref{ex:notthinfusion} that it cannot be equipped with the structure of a thin fusion extension unless $\gamma\in \Z$. By Proposition \ref{prop:onlyif}, it is hence not transgressive. In other words, the smooth reciprocity property is not sufficient to characterize transgressive central extensions.
\end{example}

The conclusion of this section is that the  reciprocity property (in its original form or in the version of Definition \ref{def:smoothrp}) does not properly characterize transgressive central extensions of non-complex Lie groups. The theory of loop fusion and thin homotopy equivariance that we have developed in this article provides such characterization, valid for all connected Lie groups.

\setsecnumdepth{2}

\begin{appendix}

\setsecnumdepth{1}

\section{Regression of trivial fusion bundles}

\label{app:toptrivreg}

In this appendix we discuss the regression of trivial bundles with trivial fusion products (but non-trivial connections) over the loop space $LX$ of a connected smooth  manifold $X$. For this purpose, we restrict the  constructions of \cite[Sections 5 and 6]{waldorf10}  to that case; this has not yet been worked out explicitly.

For $x\in X$ we consider the diffeological space $P_xX$ of paths in $X$ starting at $x$ with sitting instants, equipped with the subduction  $\ev_1: P_xX \to X:\gamma \mapsto \gamma(1)$ (the diffeological analog of a surjective submersion). Two paths with the same end point compose to a loop via the smooth map $\lop_x: P_xX^{[2]} \to LX: (\gamma_1,\gamma_2) \mapsto \prev{\gamma_2} \pcomp \gamma_1$.

Suppose $\varepsilon \in \Omega^1(LX)$ is a superficial connection on the trivial bundle $\trivlin$ that is fusive with respect to  the trivial fusion product. The regression $\uncon_x(\trivlin_\varepsilon)$ is a bundle gerbe with connection over $X$, composed of the subduction $\ev_1:P_xX \to X$, the principal $S^1$-bundle with connection $\lop^{*}_x\trivlin_{\varepsilon}$, and the identity bundle gerbe product, which is connection-preserving because $\varepsilon$ is fusive. The difficult part is
to specify a curving: a 2-form $B_{\varepsilon} \in \Omega^2(P_xX)$ such that $\pr_2^{*}B_{\varepsilon} -  \pr_1^{*}B_{\varepsilon} = \mathrm{curv}(\lop_x^{*}\trivlin_{\varepsilon}) = \lop_x^{*}\mathrm{d}\varepsilon$.

Such a curving can be constructed because $\varepsilon$ is superficial, see \cite[Section 5.2]{waldorf10}. The construction uses a bijection between the 2-forms on a diffeological space $Y$ and certain smooth maps $\mathcal{B}Y \to \ueins$ on the space $\mathcal{B}Y$ of bigons in $Y$. A \emph{bigon} is a smooth fixed-ends homotopy $\Sigma$ between two paths in $Y$, and the correspondence between a smooth map $G: \mathcal{B}Y \to \ueins$ and a 2-form $B \in \Omega^2(Y)$ is established by the relation
\begin{equation}
\label{eq:bigonform}
G(\Sigma) = \exp 2\pi\im\left (- \int_{\Sigma} B \right )\text{.}
\end{equation}
 Suppose $\Sigma \in \mathcal{B}P_xX$ is a bigon between a path $\gamma_0 \in PP_xX$ and a path $\gamma_1\in PP_xX$. Thus, it is a smooth map $\Sigma:[0,1]^2 \to\ P_xX$ such that $\Sigma(0,t)=\gamma_0(t)$ and $\Sigma(1,t)=\gamma_1(t)$. 
\begin{figure}[h]
\begin{center}
\includegraphics{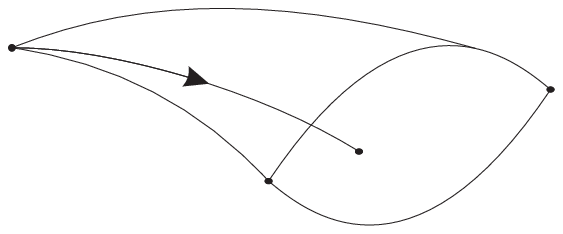}\setlength{\unitlength}{1pt}\begin{picture}(0,0)(188,700)\put(19.15625,745.30063){$x$}\put(142.27625,724.15786){$\Sigma$}\end{picture}
\qquad\quad
\includegraphics{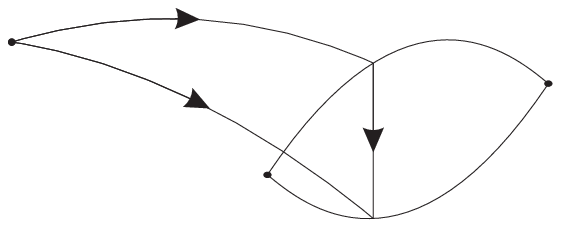}\setlength{\unitlength}{1pt}\begin{picture}(0,0)(187,596)\put(19.15625,640.71043){$x$}\put(73.51909,659.86526){$\Sigma^o(t)$}\put(60.96983,618.04205){$\Sigma^u(t)$}\put(139.24715,616.30951){$\Sigma^m(t)$}\end{picture}
\end{center}
\caption{The picture on the left shows a bigon $\Sigma$ in $\px Xx$: it can be regarded as a bigon in $X$ that has for each of its points  a chosen path connecting $x$ with that point. The picture on the right shows the three paths associated to a bigon $\Sigma$ and $t \in [0,1]$.}
\label{fig:bigon}
\end{figure}
For each $t\in[0,1]$ we extract a loop $\gamma_{\Sigma}(t) \in LX$ defined by
\begin{equation}
\label{def:gammaSigma}
\gamma_{\Sigma}(t) :=  (\Sigma^{m}(t) \pcomp \Sigma^{o}(t)) \lop  (\id \pcomp \Sigma^{u}(t))\text{,}
\end{equation}
where $\Sigma^{m}(t)$, $\Sigma^{o}(t)$, and $\Sigma^{u}(t)$ are the three paths depicted in Figure \ref{fig:bigon}.
Thus, $\gamma_{\Sigma}$ is a path in $LX$ that starts and ends at flat loops. 

Using a smoothing function $\phi:[0,1] \to [0,1]$, we define a parameterized version $\Sigma_{\sigma}$ for $\sigma\in[0,1]$ by $\Sigma_{\sigma}(s,t) := \Sigma(\phi(s)\sigma,t)$, i.e. $\Sigma_0$ is the identity bigon at the path $\gamma_0$, and $\Sigma_1=\Sigma$. Now we consider $h:[0,1]^2 \to LX: (\sigma,t)\mapsto \gamma_{\Sigma_\sigma}(t)$. By \cite[Lemma 5.2.1]{waldorf10} the curving $B_{\varepsilon}$ corresponds to the smooth map
\begin{equation*}
G_{\varepsilon}: \mathcal{B}P_xX \to \ueins : \Sigma \mapsto  \exp 2 \pi \im\left ( - \int_{[0,1]^2} h^{*}\mathrm{curv}(\trivlin_{\varepsilon}) \right )\text{.}
\end{equation*}
As $\mathrm{curv}(\trivlin_{\varepsilon})=\mathrm{d}\varepsilon$ we want to apply Stokes' theorem. Along the boundary of $[0,1]^2$, the map $h$ is as follows:
 $h(0,t) = \gamma_0(t)\lop\gamma_0(t)$,  $h(1,t)=\gamma_{\Sigma}(t)$, $h(\sigma,0)=\gamma_0(0) \lop \gamma_0(0)$, and $h(\sigma,1)=\gamma_0(1)\lop \gamma_0(1)$. As $\varepsilon$ is fusive, we have $\flat^{*}\varepsilon=0$, where $\flat:PX \to LX$ is the inclusion of flat loops, see Section \ref{sec:flatloops}. Thus, we get
\begin{equation*}
G_{\varepsilon}(\Sigma)=\exp 2 \pi \im \left( - \int_{0}^1 \gamma_{\Sigma}^{*}\varepsilon\right )\text{.}
\end{equation*}

The regressed bundle gerbe $\uncon_x(\trivlin_{\varepsilon})$ is not the trivial bundle gerbe (that one would have the identity subduction $\id_X$). It is trivializable, but not canonically trivializable. However, a trivialization $\mathcal{T}_{\kappa}$ can be obtained from a path splitting 
 $\kappa\in \Omega^1(PX)$  of $\varepsilon$, see Definition \ref{def:pathsplitting}. \begin{comment}
Since $\epsilon$ is fusive, it alsways admits a path splitting. In the following we will not distinguish between $\kappa$ and its restriction to $P_xX$.
\end{comment}
The trivialization $\mathcal{T}_{\kappa}$  is composed of the principal $\ueins$-bundle $\trivlin_{-\kappa}$ over $P_xX$ and of the  bundle isomorphism
\begin{equation*}
\id:\lop_x^{*}\trivlin_{\varepsilon} \otimes  \pr_2^{*}\trivlin_{-\kappa}  \to \pr_1^{*}\trivlin_{-\kappa}\text{,}
\end{equation*}
over $P_xX^{[2]}$, which is connection-preserving due to the defining property of a path splitting. There exists a unique 2-form $\rho_{\kappa}\in\Omega^2(X)$ such that $\mathcal{T}_{\kappa}: \uncon_x(\trivlin_{\varepsilon}) \to \mathcal{I}_{\rho_{\kappa}}$ is a connection-preserving isomorphism; this 2-form is characterized by the condition  $\ev_1^{*}\rho_{\kappa} = B_{\varepsilon}-\mathrm{d}\kappa$.

\begin{lemma}
Suppose $\rho\in\Omega^2(X)$ and $\varepsilon := \tau_{S^1}(\rho) \in \Omega^1(LX)$. Then, $\kappa := \tau_{[0,1]}(\rho)$ is a path splitting for $\varepsilon$, and $B_{\varepsilon} = \ev_1^{*}\rho+\mathrm{d}\kappa$. In particular, $\rho_{\kappa}=\rho$.
\end{lemma}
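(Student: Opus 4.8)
The plan is to verify each of the three assertions directly from the definitions, working in the case $X=LX$ with the specific forms built from a $2$-form $\rho\in\Omega^2(X)$ via transgression. This is essentially the content of Example \ref{ex:pathsplitting} combined with the explicit description of the curving $B_\varepsilon$ and the characterizing equation $\ev_1^{*}\rho_{\kappa}=B_\varepsilon-\mathrm{d}\kappa$ for the regressed trivialization.

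First, the fact that $\kappa=\tau_{[0,1]}(\rho)$ is a path splitting for $\varepsilon=\tau_{S^1}(\rho)$ is exactly the first bullet point of Example \ref{ex:pathsplitting}, so I would simply cite that computation: splitting the $S^1$-integral over the two halves of the circle, rewriting in terms of $\gamma_1,\gamma_2$ and reparameterizing gives $\lop^{*}\varepsilon=\pr_2^{*}\kappa-\pr_1^{*}\kappa$ on $PX^{[2]}$.

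Second, and this is the main point, I would compute $B_\varepsilon$ via the correspondence \erf{eq:bigonform} between $2$-forms on $P_xX$ and maps $\mathcal{B}P_xX\to\ueins$. By the formula derived just above the lemma, $B_\varepsilon$ corresponds to $G_\varepsilon(\Sigma)=\exp 2\pi\im(-\int_0^1\gamma_\Sigma^{*}\varepsilon)$. The plan is to unfold $\gamma_\Sigma(t)$ from its definition \erf{def:gammaSigma} as the fusion of the two paths $\Sigma^m(t)\pcomp\Sigma^o(t)$ and $\id\pcomp\Sigma^u(t)$, substitute $\varepsilon=\tau_{S^1}(\rho)$, and use the path-splitting property together with the fact that $\varepsilon$ vanishes on flat loops to express $\int_0^1\gamma_\Sigma^{*}\varepsilon$ as $\int_0^1(\Sigma^o(t))^{*}\kappa+\int_0^1(\Sigma^m(t))^{*}\kappa-\int_0^1(\Sigma^u(t))^{*}\kappa$ type terms. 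Evaluating these against $\kappa=\tau_{[0,1]}(\rho)$ turns them into integrals of $\rho$ over the $2$-dimensional pieces of the bigon $\Sigma$ swept out in $X$, plus the integral of $\rho$ over $\Sigma$ itself evaluated at the endpoint (via $\ev_1$). The upshot should be $G_\varepsilon(\Sigma)=\exp 2\pi\im(-\int_\Sigma(\ev_1^{*}\rho+\mathrm{d}\kappa))$, which by \erf{eq:bigonform} yields $B_\varepsilon=\ev_1^{*}\rho+\mathrm{d}\kappa$. The hard part is bookkeeping the orientations and the boundary contributions in this bigon computation, so as an alternative I would present the cleaner argument: both $B_\varepsilon$ and $\ev_1^{*}\rho+\mathrm{d}\kappa$ satisfy $\pr_2^{*}(-)-\pr_1^{*}(-)=\lop_x^{*}\mathrm{d}\varepsilon$ over $P_xX^{[2]}$ (the former by definition of a curving, the latter since $\lop_x^{*}(\ev_1^{*}\rho)$ vanishes as $\ev_1$ factors equally through both projections and $\lop_x^{*}\mathrm{d}\kappa=\mathrm{d}\lop_x^{*}\kappa=\mathrm{d}(\pr_2^{*}\kappa-\pr_1^{*}\kappa)$ by the path-splitting identity), hence they differ by a pullback $\ev_1^{*}\sigma$ of a $2$-form $\sigma$ on $X$; then one checks $\sigma=0$ by a single evaluation, e.g.\ restricting to constant paths where $B_\varepsilon$ is computed directly.

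Finally, $\rho_\kappa=\rho$ is immediate: $\rho_\kappa$ is characterized by $\ev_1^{*}\rho_\kappa=B_\varepsilon-\mathrm{d}\kappa$, and we have just shown $B_\varepsilon-\mathrm{d}\kappa=\ev_1^{*}\rho$; since $\ev_1$ is a subduction, $\ev_1^{*}$ is injective on forms, so $\rho_\kappa=\rho$. I expect the orientation/boundary bookkeeping in the bigon integral to be the only real obstacle; everything else is formal manipulation with transgression of differential forms and the defining equations already recorded in the excerpt.
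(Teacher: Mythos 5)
Your first route is exactly the paper's own proof. After quoting Example \ref{ex:pathsplitting} for the path-splitting property, the paper computes $G_{\varepsilon}(\Sigma)=\exp 2\pi\im\left(-\int_0^1\gamma_{\Sigma}^{*}\varepsilon\right)$ by rewriting $-\int_0^1\gamma_{\Sigma}^{*}\varepsilon=\int_{[0,1]\times S^1}h_{\gamma_{\Sigma}}^{*}\rho$, decomposing $h_{\gamma_{\Sigma}}$ explicitly into the four circle sectors coming from \erf{def:gammaSigma} (the surfaces swept by $\gamma_0$ and $\gamma_1$, the endpoint piece $\ev_1\circ\Sigma$, and the constant piece at $x$), which yields $-\int_{\Sigma}\ev_1^{*}\rho+\int_{\gamma_0}\kappa-\int_{\gamma_1}\kappa$, and then converting the two $\kappa$-terms into $-\int_{\Sigma}\mathrm{d}\kappa$ by Stokes' theorem; \erf{eq:bigonform} then gives $B_{\varepsilon}=\ev_1^{*}\rho+\mathrm{d}\kappa$, and $\rho_{\kappa}=\rho$ follows from $\ev_1^{*}\rho_{\kappa}=B_{\varepsilon}-\mathrm{d}\kappa$ and injectivity of $\ev_1^{*}$, exactly as you say. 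Note that what enters in this computation is not the path-splitting identity as such but the pointwise relation $\int_{\gamma}\kappa=-\int_{h_{\gamma}}\rho$ from Example \ref{ex:pathsplitting} together with the vanishing of $\varepsilon$ on flat loops; if you carry out the orientation bookkeeping you postponed, you have the intended argument.

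The proposed shortcut, however, has a genuine gap in its final step. The reduction itself is fine: both $B_{\varepsilon}$ and $\ev_1^{*}\rho+\mathrm{d}\kappa$ satisfy the curving equation $\pr_2^{*}B-\pr_1^{*}B=\lop_x^{*}\mathrm{d}\varepsilon$ over $P_xX^{[2]}$ (the $\ev_1^{*}\rho$-contributions cancel since $\ev_1\circ\pr_1=\ev_1\circ\pr_2$, and the $\mathrm{d}\kappa$-contribution is $\lop_x^{*}\mathrm{d}\varepsilon$ by the path-splitting identity), and since $\ev_1$ is a subduction the difference descends to a $2$-form $\sigma$ on $X$. But \quot{$\sigma=0$ by a single evaluation, e.g.\ restricting to constant paths} does not work: the only constant path in $P_xX$ is the constant path at the base point $x$, so such an evaluation could at best give $\sigma_x=0$, and $X$ here is an arbitrary connected manifold with no group structure, so there is no homogeneity to propagate this to all of $X$. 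To determine $\sigma$ at a general point $y$ you must evaluate $B_{\varepsilon}$, i.e.\ the bigon map $G_{\varepsilon}$, on bigons through some path ending at $y$ -- which is precisely the direct computation you were trying to avoid, at best localized near the endpoint. So the alternative does not eliminate the bigon bookkeeping; it only relocates it, and as stated its conclusion is unjustified.
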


\begin{proof}
That $\kappa$ is a path splitting for $\varepsilon$ has been checked in Example \ref{ex:pathsplitting}. Let $\Sigma$ be a bigon in $P_xX$ between a  path $\gamma_0$ and a path $\gamma_1$.
We have
\begin{equation*}
G_{\varepsilon}(\Sigma)
=\exp 2\pi\im \left(- \int_{0}^1 \gamma_{\Sigma}^{*}\varepsilon\right )
= \exp 2\pi\im \left(  \int_{[0,1] \times S^1}  h_{\gamma_{\Sigma}}^{*}\rho\right )
\end{equation*}
with $h_{\gamma_{\Sigma}}: [0,1] \times S^1 \to X$ defined by $h_{\gamma_{\Sigma}}(t,z) := \gamma_{\Sigma}(t)(z)$. \begin{comment}
Indeeed, if $\varepsilon = \tau_{S^1}(\rho)$, and $\gamma \in PLX$, then with $\gamma^{\vee}: [0,1] \times S^1 \to X$ defined by $\gamma^{\vee}(t,z) := \gamma(t)(z)$ we have
\begin{equation*}
\int_{\gamma}\varepsilon =- \int_{\gamma^{\vee}} \rho\text{.}
\end{equation*}
Indeed, we have
\begin{multline*}
\int_{\gamma}\varepsilon=\int_0^1 \varepsilon_{\gamma(t)}(\partial_t\gamma(t)) \mathrm{d}t =\int_0^1 \int_{0}^1 \rho_{\gamma(t)(z)}(\partial_z\gamma(t)(z),\partial_t\gamma(t)(z)) \mathrm{d}z \mathrm{d}t\\=-\int_0^1 \int_{0}^1 \rho_{\gamma(t)(z)}(\partial_t\gamma(t)(z),\partial_z\gamma(t)(z))  \mathrm{d}z\mathrm{d}t=- \int_{\gamma^{\vee}}\rho\text{.}
\end{multline*}
\end{comment}
We obtain from the definition \erf{def:gammaSigma} of $\gamma_{\Sigma}$:
\begin{equation*}
h_{\gamma_{\Sigma}}(t,z) = \begin{cases}
 \gamma_0(t)(4z) & \text{ if }0 \leq z \leq \frac{1}{4} \\
\ev_1(\Sigma(4z-1)(t)) & \text{ if }\frac{1}{4} \leq z \leq \frac{1}{2} \\
\gamma_1(t)(3-4z) & \text{ if }\frac{1}{2} \leq z \leq \frac{3}{4} \\
x & \text{ if }\frac{3}{4} \leq z \leq 1 \\
\end{cases}
\end{equation*}
Splitting the domain of integration into those four parts and taking care with the involved orientations yields
\begin{eqnarray*}
\exp2\pi\im\left(  \int_{[0,1] \times S^1}  h_{\gamma_{\Sigma}}^{*}\rho\right )
= \exp 2\pi\im \left ( - \int_{\Sigma}\ev_1^{*}\rho + \int_{\gamma_0} \kappa- \int_{\gamma_1} \kappa \right )
\text{.}
\end{eqnarray*}
Finally,  Stokes' theorem gives
\begin{equation*}
\int_{\Sigma}\mathrm{d}\kappa = -\int_{\gamma_0}\kappa + \int_{\gamma_1}\kappa\text{.}
\end{equation*}
All together, we obtain
\begin{equation*}
G_{\varepsilon}(\Sigma)
=  \exp 2\pi \im \left (- \int_{\Sigma}(\ev_1^{*}\rho+\mathrm{d}\kappa) \right )\text{.}
\end{equation*}
Using \erf{eq:bigonform}, we get the claimed equality.
\end{proof}

As regression is a monoidal functor, we want to make sure that the trivialization $\mathcal{T}_{\kappa}$ is compatible with that monoidal structure.

\begin{lemma}
\label{lem:kappaadd}
Suppose $\varepsilon_1,\varepsilon_2$ are superficial connections on the trivial bundle $\trivlin$ over $LX$, and fusive with respect to the trivial fusion product. Suppose  $\kappa_1$ and $\kappa_2$ are path splittings for $\varepsilon_1$ and $\varepsilon_2$, respectively. Then, we have $\rho_{\kappa_1 + \kappa_2}=\rho_{\kappa_1} + \rho_{\kappa_2}$, and there exists a connection-preserving transformation
\begin{equation*}
\alxydim{@R=\xyst@C=5em}{\uncon_x(\trivlin_{\varepsilon_1+\varepsilon_2}) \ar[r]^-{\mathcal{T}_{\kappa_1+\kappa_2}} \ar[d]_{\cong}  &   \mathcal{I}_{\rho_{\kappa_1+\kappa_2}} \ar@{=>}[dl] \ar@{=}[d]  \\ \uncon_x(\trivlin_{\varepsilon_1}) \otimes \uncon_x(\trivlin_{\varepsilon_2}) \ar[r]_-{\mathcal{T}_{\kappa_1} \otimes \mathcal{T}_{\kappa_2}} & \mathcal{I}_{\rho_{\kappa_1}} \otimes \mathcal{I}_{\rho_{\kappa_2}}\text{.}}
\end{equation*} 
\end{lemma}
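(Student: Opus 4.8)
The plan is to reduce the whole statement to the additivity of the curving $B_{\varepsilon}$ of the regressed gerbe $\uncon_x(\trivlin_{\varepsilon})$ in the connection $\varepsilon$. First I would observe that $\varepsilon_1+\varepsilon_2$ is again a superficial connection on $\trivlin$ that is fusive with respect to the trivial fusion product: superficiality asks that holonomies along loops in $LLX$ that are homotopic through a rank two map agree, and holonomies are exponentials of period integrals, so summing connections multiplies holonomies and preserves the condition; and the two conditions defining fusiveness of a connection $1$-form on $\trivlin$ for the trivial fusion product (the fusion relation for $\lop^{*}\varepsilon$ and $\flat^{*}\varepsilon=0$) are linear in $\varepsilon$. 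For the same reason $\kappa_1+\kappa_2$ is a path splitting for $\varepsilon_1+\varepsilon_2$, since the defining equation $\lop^{*}\varepsilon=\pr_2^{*}\kappa-\pr_1^{*}\kappa$ of Definition \ref{def:pathsplitting} is linear in the pair $(\varepsilon,\kappa)$; hence all objects appearing in the diagram are defined.

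Next comes the one genuine computation: $B_{\varepsilon_1+\varepsilon_2}=B_{\varepsilon_1}+B_{\varepsilon_2}$. By construction $B_{\varepsilon}$ corresponds, under the bijection \erf{eq:bigonform} between $2$-forms on $P_xX$ and smooth maps $\mathcal{B}P_xX\to\ueins$, to the map $G_{\varepsilon}$ given by $G_{\varepsilon}(\Sigma)=\exp 2\pi\im(-\int_0^1\gamma_{\Sigma}^{*}\varepsilon)$, with $\gamma_{\Sigma}$ the path of loops from \erf{def:gammaSigma}, which does not involve $\varepsilon$. Therefore $G_{\varepsilon_1+\varepsilon_2}=G_{\varepsilon_1}\cdot G_{\varepsilon_2}$, and since \erf{eq:bigonform} turns a pointwise product of $\ueins$-valued maps into the sum of the corresponding $2$-forms, the additivity of $B$ follows. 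Using the characterization $\ev_1^{*}\rho_{\kappa}=B_{\varepsilon}-\mathrm{d}\kappa$ of $\rho_{\kappa}$, I then compute $\ev_1^{*}\rho_{\kappa_1+\kappa_2}=(B_{\varepsilon_1}-\mathrm{d}\kappa_1)+(B_{\varepsilon_2}-\mathrm{d}\kappa_2)=\ev_1^{*}(\rho_{\kappa_1}+\rho_{\kappa_2})$, and since $\ev_1$ is a subduction, $\ev_1^{*}$ is injective on $2$-forms, so $\rho_{\kappa_1+\kappa_2}=\rho_{\kappa_1}+\rho_{\kappa_2}$. Together with $\mathcal{I}_{\rho+\rho'}=\mathcal{I}_{\rho}\otimes\mathcal{I}_{\rho'}$ this makes the right-hand vertical arrow of the diagram an honest equality.

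It remains to produce the connection-preserving transformation, for which I would unwind the monoidal structure of the regression functor of \cite{waldorf10} in the special case of trivial bundles. The key observation is that $\mathcal{T}_{\kappa}$ is as simple as possible: it consists of the line bundle $\trivlin_{-\kappa}$ over $P_xX$ together with the identity bundle isomorphism over $P_xX^{[2]}$. Tracing through the canonical isomorphism $\uncon_x(\trivlin_{\varepsilon_1+\varepsilon_2})\cong\uncon_x(\trivlin_{\varepsilon_1})\otimes\uncon_x(\trivlin_{\varepsilon_2})$ — whose datum on curvings is precisely the equality $B_{\varepsilon_1+\varepsilon_2}=B_{\varepsilon_1}+B_{\varepsilon_2}$ just established, and on bundle gerbe products the identity — one finds that, after passing to a common refinement of the surjective submersions, $\mathcal{T}_{\kappa_1+\kappa_2}$ and the composite of $\mathcal{T}_{\kappa_1}\otimes\mathcal{T}_{\kappa_2}$ with that canonical isomorphism are assembled from the same line bundle and identity datum, up to the canonical identification $\trivlin_{-\kappa_1}\otimes\trivlin_{-\kappa_2}\cong\trivlin_{-(\kappa_1+\kappa_2)}$. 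This identification is connection-preserving because $-(\kappa_1+\kappa_2)$ is the sum of the two connection $1$-forms, and it is the required transformation. I expect the main obstacle to be not any single computation but the careful bookkeeping with the coherence data of the monoidal regression functor — in particular pinning down the canonical isomorphism $\uncon_x(\trivlin_{\varepsilon_1+\varepsilon_2})\cong\uncon_x(\trivlin_{\varepsilon_1})\otimes\uncon_x(\trivlin_{\varepsilon_2})$ explicitly enough that the transformation written down is visibly compatible with the bundle gerbe products and the curvings.
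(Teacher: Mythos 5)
Your proposal is correct and follows essentially the same route as the paper: additivity of the curving $B_{\varepsilon}$ (which the paper gets from the monoidal structure of $\uncon_x$ being induced by $\id\colon \trivlin_{\varepsilon_1+\varepsilon_2}\to\trivlin_{\varepsilon_1}\otimes\trivlin_{\varepsilon_2}$, and you verify directly from the bigon formula), then $\ev_1^{*}\rho_{\kappa}=B_{\varepsilon}-\mathrm{d}\kappa$ to get $\rho_{\kappa_1+\kappa_2}=\rho_{\kappa_1}+\rho_{\kappa_2}$, and finally the transformation induced by the connection-preserving identification $\trivlin_{-(\kappa_1+\kappa_2)}\cong\trivlin_{-\kappa_1}\otimes\trivlin_{-\kappa_2}$. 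Your preliminary checks and the explicit curving computation just spell out what the paper states more tersely.
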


\begin{proof}
The isomorphism $\uncon_x(\trivlin_{\varepsilon_1+\varepsilon_2}) \cong \uncon_x(\trivlin_{\varepsilon_1}) \otimes \uncon_x(\trivlin_{\varepsilon_2})$ that implements that $\uncon_x$ is monoidal is induced from the connection-preserving, fusion-preserving isomorphism $\id\maps \trivlin_{\varepsilon_1+\varepsilon_2} \to \trivlin_{\varepsilon_1} \otimes \trivlin_{\varepsilon_2}$. In particular, we have $B_{\varepsilon_1+\varepsilon_2}=B_{\varepsilon_1}+B_{\varepsilon_2}$. We calculate
$\ev_1^{*}(\rho_{\kappa_1}+\rho_{\kappa_2}) = B_{\varepsilon_1}-\mathrm{d}\kappa_1+B_{\varepsilon_1}-\mathrm{d}\kappa_1 = B_{\varepsilon_1+\varepsilon_2} + \mathrm{d}(\kappa_1+\kappa_2)$; this shows that $\rho_{\kappa_1+\kappa_2} = \rho_{\kappa_1}+\rho_{\kappa_2}$. The announced connection-preserving transformation is now simply induced by the connection-preserving isomorphism $\id: \trivlin_{-(\kappa_1+\kappa_2)} \to \trivlin_{-\kappa_1} \otimes \trivlin_{-\kappa_2}$.   
\end{proof}

The next two propositions describe the relation between the trivialization $\mathcal{T}_{\kappa}$ of the regressed bundle  gerbe $\uncon_x(\trivlin_{\varepsilon})$, the canonical trivialization $t_{\rho}$ of $\trcon_{\mathcal{I}_{\rho}}$, and the two natural equivalences
\begin{equation*}
\mathcal{A}: \uncon_x \circ \trcon \to \id_{\hc 1 \ugrbcon X}
\quand
\varphi: \trcon \circ \uncon_x \to \id_{\ufusbunconsf X}
\end{equation*}
that establish that the functors $\uncon_x$ and $\trcon$ form an equivalence of categories  \cite[Theorem A]{waldorf10}.

\begin{proposition}
\label{prop:trivbase}
Suppose $\rho\in \Omega^2(X)$. Let $t_{\rho}: \trcon_{\mathcal{I}_{\rho}} \to \trivlin_{\varepsilon}$ be the canonical trivialization, with $\epsilon = \tau_{S^1}(\rho)\in\Omega^1(LX)$. Let $\mathcal{A}_{\mathcal{I}_{\rho}}: \uncon_x(\trcon_{\mathcal{I}_{\rho}}) \to \mathcal{I}_{\rho}$ be the component of the natural equivalence $\mathcal{A}$ at $\mathcal{I}_{\rho}$. Let $\kappa := \tau_{[0,1]}(\rho)$ be the canonical path splitting of $\varepsilon$ and let  $\mathcal{T}_{\kappa}: \uncon_x(\trivlin_{\varepsilon}) \to \mathcal{I}_{\rho}$ be the corresponding trivialization. Then, there exists a connection-preserving transformation
\begin{equation*}
\mathcal{A}_{\mathcal{I}_{\rho}}\cong\mathcal{T}_{\kappa} \circ \uncon_x(t_{\rho})  \text{.}
\end{equation*}
\end{proposition}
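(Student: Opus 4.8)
The plan is to compare the two connection-preserving isomorphisms $\mathcal{A}_{\mathcal{I}_{\rho}}$ and $\mathcal{T}_{\kappa}\circ \uncon_x(t_{\rho})$, both of which go from $\uncon_x(\trcon_{\mathcal{I}_{\rho}})$ to $\mathcal{I}_{\rho}$, and show that they are 2-isomorphic via a connection-preserving transformation. First I would recall that connection-preserving isomorphisms between two fixed bundle gerbes with connection over a connected base differ by a flat principal $\ueins$-bundle with connection over $X$; so the composite $\mathcal{A}_{\mathcal{I}_{\rho}}^{-1}\circ \mathcal{T}_{\kappa}\circ \uncon_x(t_{\rho})$ is an automorphism of $\mathcal{I}_{\rho}$, classified by an element of $\h^1(X,\ueins)$ together with a flat connection. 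The goal is to show this class is trivial, and in fact (since we want a \emph{canonical} transformation) to exhibit the trivializing data explicitly.

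The key reduction is that both sides are trivializations of $\uncon_x(\trcon_{\mathcal{I}_{\rho}})$, so rather than working with the abstract regression functor I would unwind the construction. By the lemma just proved, the curving of $\uncon_x(\trivlin_{\varepsilon})$ corresponds under \erf{eq:bigonform} to the bigon map $G_{\varepsilon}$, and $\mathcal{T}_{\kappa}$ is built from $\trivlin_{-\kappa}$ with $\kappa = \tau_{[0,1]}(\rho)$ and the identified base 2-form $\rho_{\kappa}=\rho$. On the other hand, $\uncon_x(t_{\rho})$ is the regression of the canonical trivialization $t_{\rho}$ of $\trcon_{\mathcal{I}_{\rho}}$; here I would use the explicit description of $t_{\rho}$ from \cite{waldorf5} and the functoriality of $\uncon_x$ on morphisms (a morphism of fusion bundles over $LX$ regresses to a 1-isomorphism of gerbes built from the induced bundle over $P_xX$). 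Composing, $\mathcal{T}_{\kappa}\circ\uncon_x(t_{\rho})$ is a 1-isomorphism $\uncon_x(\trcon_{\mathcal{I}_{\rho}})\to\mathcal{I}_{\rho}$ whose underlying $\ueins$-bundle over $P_xX$ is a tensor product of $\trivlin_{-\kappa}$ with the regressed bundle of $t_{\rho}$. The component $\mathcal{A}_{\mathcal{I}_{\rho}}$ is, by the construction in \cite[Section 6]{waldorf10}, also a 1-isomorphism with an explicitly given $\ueins$-bundle over $P_xX$; I would match the two bundles and the two descent isomorphisms over $P_xX^{[2]}$ directly, producing the required 2-morphism as the evident identification, and then check it is connection-preserving — this reduces, after Stokes, to the same integral identity $G_{\varepsilon}(\Sigma)=\exp 2\pi\im(-\int_{\Sigma}(\ev_1^{*}\rho + \mathrm{d}\kappa))$ established in the preceding lemma, together with the fact that $t_{\rho}$ has connection 1-form $\varepsilon = \tau_{S^1}(\rho)$.

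The main obstacle is bookkeeping rather than conceptual difficulty: one must keep track of the several layers of data (subduction $\ev_1\maps P_xX\to X$, the bundle $\lop_x^{*}\trivlin_{\varepsilon}$, the curving $B_{\varepsilon}$, the bundle $\trivlin_{-\kappa}$ of $\mathcal{T}_{\kappa}$, and the bundle produced by $\uncon_x(t_{\rho})$) and verify that the two composite 1-isomorphisms agree up to a 2-isomorphism that respects \emph{all} of this, in particular the curvings and connections. A clean way to organize this is to invoke, as in Lemma \ref{lem:canalpha} and Lemma \ref{lem:canbeta}, that the set of connection-preserving transformations between two fixed connection-preserving 1-isomorphisms over the connected manifold $X$ is a torsor over locally constant $\ueins$-maps, hence over a single constant; so it suffices to produce \emph{one} connection-preserving transformation and the statement follows, and producing one amounts to observing that the two 1-isomorphisms are built from literally the same local data once $\kappa=\tau_{[0,1]}(\rho)$ is substituted. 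I would close by remarking that the transformation so obtained is compatible with the monoidal structures, which will be used (via Lemma \ref{lem:kappaadd}) in the proof of Theorem \ref{th:equivalence}.
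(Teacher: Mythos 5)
Your overall route---unwinding both 1-isomorphisms over the subduction $\ev_1\maps P_xX \to X$, matching the underlying $\ueins$-bundles and the isomorphisms over $P_xX^{[2]}$, and then checking connections---is the same as the paper's, but two of your shortcuts do not carry the argument as stated. First, the appeal to the torsor property in the style of Lemmata \ref{lem:canalpha} and \ref{lem:canbeta} is circular here: that the connection-preserving transformations between two fixed 1-isomorphisms form a torsor over constants only has content once that set is non-empty, and non-emptiness is exactly what Proposition \ref{prop:trivbase} asserts; it neither organizes nor shortens the existence proof. Second, the two 1-isomorphisms are \emph{not} built from literally the same local data. The composite $\mathcal{T}_{\kappa}\circ\uncon_x(t_{\rho})$ has underlying bundle $\trivlin_{-\kappa}$ over $P_xX$ together with the identity isomorphism \erf{eq:isoalpha}, whereas $\mathcal{A}_{\mathcal{I}_{\rho}}$, by the construction of \cite[Section 6.1]{waldorf10}, has underlying bundle $Q$ whose fibre over $\gamma$ consists of equivalence classes of triples $(\mathcal{T},t_0,t)$ with $\mathcal{T}$ a trivialization of $\gamma^{*}\mathcal{I}_{\rho}$ and $t_0,t$ chosen endpoint data.

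The actual content of the proof, which your phrase \quot{the evident identification} glosses over, is therefore: (a) exhibit the canonical section $s(\gamma)=(\id_{\mathcal{I}_0},1,1)$ of $Q$, available because $\gamma^{*}\mathcal{I}_{\rho}=\mathcal{I}_0$; (b) use the characterization $\alpha(t_{\rho}(\gamma_1\lop\gamma_2)\otimes s(\gamma_2))=s(\gamma_1)$ to see that $s$ intertwines $\alpha$ with \erf{eq:isoalpha}; and (c)---the one genuine computation---identify the connection on $Q$ in the gauge $s$, whose parallel transport along $\gamma\in PP_xX$ is governed by $\int_{h_{\gamma}}\rho$ and hence coincides with the parallel transport of $-\kappa=-\tau_{[0,1]}(\rho)$. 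Note that (c) is a different identity from the bigon identity for $G_{\varepsilon}$ which you propose to recycle: the preceding lemma only establishes $B_{\varepsilon}=\ev_1^{*}\rho+\mathrm{d}\kappa$, i.e.\ $\rho_{\kappa}=\rho$, which fixes the target of $\mathcal{T}_{\kappa}$ but says nothing about the connection on $Q$. With these two repairs your plan becomes the paper's proof; your closing remark on monoidal compatibility is handled separately by Lemma \ref{lem:kappaadd} and is not part of this proposition.
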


\begin{proof}
The isomorphism $\uncon_x(t_{\rho}): \uncon_x(\trcon_{\mathcal{I}_{\rho}}) \to \uncon_x(\trivlin_{\varepsilon})$ is induced from the bundle isomorphism $\lop^{*}t_{\rho}: \lop^{*}\trcon_{\mathcal{I}_{\rho}} \to \trivlin_{\lop^{*}\varepsilon}$ over $P_xX^{[2]}$.  The composition $\mathcal{T}_{\kappa} \circ \uncon_x(t_{\rho})$ is thus  given by the $S^1$-bundle $\trivlin_{-\kappa}$  and the isomorphism
\begin{equation}
\label{eq:isoalpha}
\alxydim{@C=4em}{\lop_x^{*}\trcon_{\mathcal{I}_{\rho}} \otimes \pr_2^{*}\trivlin_{-\kappa} \ar[r]^-{\lop_x^{*}t_{\rho} \otimes \id} & \lop_x^{*}\trivlin_{\varepsilon} \otimes \pr_2^{*}\trivlin_{-\kappa}\ar[r]^-{\id} & \pr_1^{*}\trivlin_{-\kappa}\text{.}}
\end{equation}

Next we describe the connection-preserving isomorphism $\mathcal{A}_{\mathcal{I}_{\rho}}$ following \cite[Section 6.1]{waldorf10}. It consists of an $S^1$-bundle $Q$ over $P_xX$ with connection, and of a connection-preserving bundle isomorphism $\alpha: \lop_x^{*}\trcon_{\mathcal{I}_{\rho}} \otimes \pr_2^{*}Q \to \pr_1^{*}Q$ over $P_xX^{[2]}$. 

The fibre of $Q$ over $\gamma\in P_xX$ consists of triples $(\mathcal{T},t_0,t)$, where $\mathcal{T}:\gamma^{*}\mathcal{I}_{\rho} \to \mathcal{I}_0$ is a trivialization (in turn consisting of an $S^1$-bundle $T$ with connection over $[0,1]$ and of a connection-preserving bundle isomorphism which here is necessarily the identity $\tau=\id_T$), $t_0 \in T_{0}$ and $t \in T_1$. Two triples $(\mathcal{T},t_0,t)$ and $(\mathcal{T}',t_0',t')$ are identified if there exists a connection-preserving transformation $\varphi:\mathcal{T} \Rightarrow \mathcal{T}'$ such that $\varphi(t_0)=t_0'$ and $\varphi(t)=t'$. The $S^1$-action on $S^1$ is $(\mathcal{T},t_0,t)\cdot z := (\mathcal{T},t_0,t\cdot z)$. In our situation, $Q$ has a canonical section  $s:P_xX  \to Q:(\gamma,z)\mapsto (\id_{\mathcal{I}_0},1,1)$, using that $\gamma^{*}\mathcal{I}_{\rho}=\mathcal{I}_{\gamma^{*}\rho}=\mathcal{I}_0$. 
The bundle isomorphism $\alpha$ is  over a point $(\gamma_1,\gamma_2)\in P_xX^{[2]}$ a map
\begin{equation*}
\alpha: \tr_{\mathcal{I}_{\rho}}|_{\gamma_1 \lop \gamma_2} \otimes Q_{\gamma_2}\to Q_{\gamma_1}\text{,}
\end{equation*}  
and it is characterized by $\alpha(t_{\rho}(\gamma_1\lop\gamma_2) \otimes s(\gamma_2))=s(\gamma_1)$. 
The connection on $Q$ is defined via its parallel transport. Using the section $s\maps P_xX \to Q$, it suffices to define a 1-form on $P_xX$, and we do this by defining  a smooth map $F:PP_xX \to S^1$. This map is given by
\begin{equation*}
F(\gamma) = \int_{h_\gamma}\rho\text{,}
\end{equation*}
where $h_\gamma:[0,1]^2 \to X$ is defined by $h_\gamma(s,t)=\gamma(s)(t)$. 
However, this map characterizes precisely the parallel transport of the 1-form $-\kappa = -\tau_{[0,1]}(\rho)\in \Omega^1(P_xX)$. 

Summarizing, $s$ defines a connection-preserving transformation between $\mathcal{A}_{\mathcal{I}_0}$ and the the trivialization consisting of the trivial bundle $\trivlin_{-\kappa}$ and of the isomorphism \erf{eq:isoalpha}. 
\end{proof}

\begin{proposition}
\label{prop:trivloop}
Suppose $\varepsilon \in \Omega^1(LX)$ is a superficial connection on the trivial $\ueins$-bundle over $LX$, and fusive with respect to the trivial fusion product. Let $\varphi_{\trivlin_{\varepsilon}}: \trcon_{\uncon_x(\trivlin_{\varepsilon})} \to \trivlin_{\varepsilon}$ be the component of the natural equivalence $\varphi$ at $\trivlin_{\varepsilon}$. Let $\kappa\in\Omega^1(PX)$ be a contractible path splitting for $\varepsilon$, and let $\mathcal{T}_{\kappa}: \uncon_x(\trivlin_{\varepsilon}) \to \mathcal{I}_{\rho_{\kappa}}$ be the corresponding trivialization. Let $t_{\rho_{\kappa}}: \trcon_{\mathcal{I}_{\rho_{\kappa}}} \to \trivlin_{\varepsilon_{\kappa}}$ be the canonical trivialization
 with $\varepsilon_{\kappa} = \tau_{S^1} (\rho_{\kappa})$. Then,
\begin{equation}
\label{eq:eqmorph}
\varphi_{\trivlin_{\varepsilon}} = t_{\rho_{\kappa}} \circ \trcon_{\mathcal{T}_{\kappa}}\text{;}
\end{equation}
in particular, $\varepsilon_{\kappa}=\varepsilon$. \end{proposition}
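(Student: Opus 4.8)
The plan is to prove the displayed identity of bundle morphisms and to deduce $\varepsilon_{\kappa}=\varepsilon$ as a consequence. Both $\varphi_{\trivlin_{\varepsilon}}$ and $t_{\rho_{\kappa}}\circ\trcon_{\mathcal{T}_{\kappa}}$ are $\ueins$-equivariant, fusion- and connection-preserving bundle isomorphisms with source $\trcon_{\uncon_x(\trivlin_{\varepsilon})}$ and with target the trivial $\ueins$-bundle $\ueins\times LX$, carrying the connection $1$-forms $\varepsilon$ and $\varepsilon_{\kappa}=\tau_{S^1}(\rho_{\kappa})$ respectively. Since a $\ueins$-equivariant morphism of principal bundles covering the identity is determined by its value on a single point of each fibre, it suffices to exhibit, for every $\tau\in LX$, an element $\sigma_{\tau}\in(\trcon_{\uncon_x(\trivlin_{\varepsilon})})_{\tau}$ depending smoothly on $\tau$ on which both morphisms take the value $(1,\tau)$. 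Once this is done the two morphisms agree, and comparing the connections that they preserve yields $\varepsilon_{\kappa}=\varepsilon$.

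For the distinguished element I would take $\sigma_{\tau}$ to be the unique element with $t_{\rho_{\kappa}}(\trcon_{\mathcal{T}_{\kappa}}(\sigma_{\tau}))=(1,\tau)$, i.e. the $\trcon_{\mathcal{T}_{\kappa}}$-preimage of the canonical trivializing section of $\trcon_{\mathcal{I}_{\rho_{\kappa}}}$; then the right-hand morphism sends $\sigma_{\tau}$ to $(1,\tau)$ by construction, and the whole statement reduces to the single equality $\varphi_{\trivlin_{\varepsilon}}(\sigma_{\tau})=(1,\tau)$. To establish this I would unwind, on one side, the explicit description of $\sigma_{\tau}$ as a trivialization over $S^1$ of the pulled-back regressed gerbe $\tau^{*}\uncon_x(\trivlin_{\varepsilon})$ — whose data is $\ev_1\colon\px Xx\to X$ together with $\lop_x^{*}\trivlin_{\varepsilon}$ and the curving $B_{\varepsilon}$ given by the bigon map $G_{\varepsilon}$ described earlier in this appendix — composed with the explicit trivialization $\mathcal{T}_{\kappa}$ (bundle $\trivlin_{-\kappa}$ and identity isomorphism); and on the other side the explicit formula for the component $\varphi_{P}$ of the natural equivalence $\varphi$ from \cite[Section~6.2]{waldorf10}, which evaluates a trivialization by choosing a base path from $x$ to $\tau(0)$ and a splitting, parallel-transporting a chosen lift along an auxiliary path in the relevant fibre-product space, and reading off the resulting element of $\ueins$ as $\exp 2\pi\im(\int(\cdots))$. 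The claim is that this integral vanishes: the contributions of $\varepsilon$ arising along the auxiliary path are exactly cancelled by those of $\kappa$ (this is where the defining relation of a path splitting, Definition~\ref{def:pathsplitting}, enters), and the one residual term — the integral of $\kappa$ over a retraction $\phi_{\gamma}$ — vanishes precisely because $\kappa$ is contractible.

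Along the way one must verify that $\sigma_{\tau}$, and the value $\varphi_{\trivlin_{\varepsilon}}(\sigma_{\tau})$, are independent of the auxiliary choices (base path, splitting, bounding disc for the surface terms); this again reduces to $\kappa$ being a path splitting, so that the would-be ambiguities are governed by $\flat^{*}\varepsilon$, which vanishes since $\varepsilon$ is fusive (cf.\ Section~\ref{sec:flatloops}), together with contractibility of $\kappa$. It is natural to run this computation in parallel with the proof of Proposition~\ref{prop:trivbase}: there one compares $\mathcal{A}_{\mathcal{I}_{\rho}}$ with $\mathcal{T}_{\kappa}\circ\uncon_x(t_{\rho})$, here one compares $\varphi_{\trivlin_{\varepsilon}}$ with $t_{\rho_{\kappa}}\circ\trcon_{\mathcal{T}_{\kappa}}$, and the two propositions together say that $\mathcal{T}_{\kappa}$ and $t_{\rho_{\kappa}}$ intertwine the two halves of the transgression–regression equivalence.

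\textbf{Main obstacle.} The hard part is the computation just described: faithfully importing the definition of $\varphi$ from \cite{waldorf10}, specialising it to the trivial fusion bundle $\trivlin_{\varepsilon}$ with the regressed gerbe's curving $B_{\varepsilon}$ presented through the bigon formula, and checking that all orientation conventions line up so that the $\varepsilon$-terms and the $\kappa$-terms cancel and only the contractible retraction term survives. A secondary subtlety is that $\varepsilon$ is an arbitrary superficial, fusive connection and not a priori of the form $\tau_{S^1}(\rho)$, so one cannot simply combine the lemma above with Proposition~\ref{prop:trivbase}; the identity $\varepsilon=\varepsilon_{\kappa}$ is a genuine output of the morphism identity rather than an input to it.
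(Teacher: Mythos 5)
Your proposal is correct and takes essentially the same route as the paper: the paper evaluates both morphisms on the fibrewise element $\beta^{*}\mathcal{T}_{\kappa}$ (your $\sigma_{\tau}$, the preimage of the canonical section), unwinds the prescription for $\varphi$ from \cite[Section 6.2]{waldorf10}, finds that the resulting $\ueins$-factor is the exponential of integrals of $\kappa$ along two retracting paths, which vanish by contractibility, and deduces $\varepsilon_{\kappa}=\varepsilon$ from the equality of connection-preserving isomorphisms exactly as you indicate. The only minor divergence is that in the actual computation no $\varepsilon$-against-$\kappa$ cancellation is needed at that stage -- the path-splitting relation was already consumed in making $\mathcal{T}_{\kappa}$ connection-preserving -- so only the contractibility of $\kappa$ (together with $\flat^{*}\varepsilon=0$ from fusivity, as you note) does the final work.
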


\begin{proof}
Note that \erf{eq:eqmorph} is an equality between two connection-preserving bundle isomorphisms going from  $\trcon_{\uncon_x(\trivlin_{\varepsilon})}$ to $\trivlin_{\varepsilon}$ and $\trivlin_{\varepsilon_{\kappa}}$, respectively. This implies $\varepsilon=\varepsilon_{\kappa}$. 
For $\beta\in LG$ a loop, $\beta^{*}\mathcal{T}_{\kappa}$ is a trivialization of $\beta^{*}\uncon_x(\trivlin_{\varepsilon})$, and thus an element of $\trcon_{\uncon_x(\trivlin_{\varepsilon})}$ over $\beta$. We have
\begin{equation*}
\trcon_{\mathcal{T}_{\kappa}}(\beta^{*}\mathcal{T}_{\kappa}) = \beta^{*}\mathcal{T}_{\kappa}  \circ  \beta^{*}\mathcal{T}_{\kappa}^{-1} = \id_{\mathcal{I}_{\beta^{*}\rho_{\kappa}}}\text{,}
\end{equation*}
considered as an element of $\trcon_{\mathcal{I}_{\rho_{\kappa}}}$. Under the canonical trivialization $t_{\rho_{\kappa}}$, this element is equal to $(\beta,1)\in LX\times  \ueins$. 

On the other hand, we compute the element $p := \varphi_{\trivlin_{\varepsilon}}(\beta^{*}\mathcal{T}_{\kappa}) \in \trivlin_{\varepsilon}$ following the definition of $\varphi$ given in \cite[Section 6.2]{waldorf10}. We have to consider the space $Z:=S^1 \lli{\beta}\times_{\ev_1} P_xX$ as the subduction of $\beta^{*} \uncon_x(\trivlin_{\varepsilon})$ and over $Z$ the bundle $\trivlin_{-\kappa}$, pulled back along the projection $Z \to P_xX$. Over $Z \times_{S^1} Z$ the trivialization $\beta^{*}\mathcal{T}_{\kappa}$ has the identity morphism
\begin{equation*}
\id: \lop_x^{*}\trivlin_{\varepsilon} \otimes \pr_2^{*}\trivlin_{-\kappa} \to \pr_1^{*}\trivlin_{-\kappa}\text{,}
\end{equation*}
also pulled back along $Z \times_{S^1} Z \to P_xX^{[2]}$. We represent the loop $\beta$  by a path $\gamma \in P_xX$ with $\gamma(1)=\beta(0)$  and paths $\gamma_k \in PX$ with $\gamma_k(0)=\beta(0)$ and $\gamma_k(1)=\beta(\frac{1}{2})$, related via a thin homotopy $h: (\gamma_1 \pcomp \gamma) \lop (\gamma_2 \pcomp \gamma) \to \beta$. In $Z$ we consider the retracting paths  $\alpha_i$ with $\alpha_i(0)=(0,\id \pcomp \gamma)$ and $\alpha_i(1)=(\frac{1}{2},\gamma_i \pcomp \gamma)$. Then, the prescription is
\begin{equation*}
p =(\beta, \exp 2\pi \im\left( - \int_{\alpha_2 \pcomp \prev{\alpha_1}} \pr^{*}\kappa \right ))=(\beta, \exp 2\pi\im \left(  \int_{\alpha_1} \pr^{*}\kappa- \int_{\alpha_2 } \pr^{*}\kappa \right ))\text{.}
\end{equation*}
Since the paths $\alpha_i$ are retractions and the path splitting $\kappa$ is contractible, both integrals vanish separately.
Thus, we have $p=(\beta,1)$; this yields the claimed equality. 
\end{proof}

\end{appendix}

\newpage

\tocsection{Table of terminology}

\label{sec:tableterm}

\newcommand{\notation}[3]{
  \noindent
  \begin{minipage}[b]{0.30\textwidth}\raggedright#1\end{minipage}
  \begin{minipage}[t]{0.6\textwidth}#2\vspace{0.3cm}\end{minipage}\hfill
  \begin{minipage}[t]{0.08\textwidth}\begin{flushright}#3\end{flushright}\end{minipage}}

\def\subitem{\quad --- }

\notation{Fusion product}{$\lambda: \mathcal{L}_{\gamma_1\lop\gamma_2} \otimes \mathcal{L}_{\gamma_2\lop\gamma_3} \to \mathcal{L}_{\gamma_1\lop\gamma_3}$}{Page \pageref{def:fusion}}
\notation{\subitem multiplicative}{$\lambda$ is a group homomorphism}{Page \pageref{def:fusion}}

\noindent
Thin homotopy equivariant structure

\notation{\strut}{$d_{\tau_0,\tau_1}: \mathcal{L}_{\tau_0} \to \mathcal{L}_{\tau_1}$ for thin homotopic loops $\tau_0,\tau_1$}{Page \pageref{def:thes}}
\notation{\subitem multiplicative}{$d$ is a group homomorphism}{Page \pageref{def:thes}}
\notation{\subitem compatible}{$d$ is connection-preserving}{Page \pageref{def:comp}}
\notation{\subitem symmetrizing}{rotation by an angle of $\pi$ switches factors under fusion}{Page \pageref{def:comp}}
\notation{\subitem fusive}{compatible and symmetrizing}{Page \pageref{def:comp}}

\notation{Bundle morphism}{$\varphi:\mathcal{L} \to \mathcal{L}'$}{}
\notation{\subitem fusion-preserving}{$\varphi(\lambda(p\otimes q))=\lambda'(\varphi(p) \otimes \varphi(q))$}{Page \pageref{par:fuspres}}
\notation{\subitem thin}{$\varphi(d_{\tau_0,\tau_1}(p))=d'_{\tau_0,\tau_1}(\varphi(p))$}{Page \pageref{par:thinbm}}

\notation{Connection}{\strut}{\strut}
\notation{\subitem thin}{induces a thin homotopy equivariant structure}{Page \pageref{def:thin}}
\notation{\subitem superficial}{thin and its holonomy is thin homotopy invariant}{Page \pageref{def:thin}}
\notation{\subitem compatible}{fusion product is connection-preserving}{Page \pageref{def:compatible}}
\notation{\subitem symmetrizing}{induced thin homotopy equivariant structure is symmetrizing}{Page \pageref{def:compatible}}
\notation{\subitem fusive}{compatible and symmetrizing}{Page \pageref{def:compatible}}

\noindent
Path splitting  of $\epsilon \in \Omega^k(LG \times LG )$

\notation{\strut}{$\kappa \in \Omega^k(PG \times PG)$ with $\varepsilon_{\gamma_1\lop \gamma_2,\gamma_1' \lop \gamma_2'} = \kappa_{\gamma_1,\gamma_1'} - \kappa_{\gamma_2,\gamma_2'}$}{Page \pageref{def:pathsplitting}}
\notation{\subitem multiplicative}{$\kappa_{\gamma_1\gamma_1',\gamma_2\gamma_2'} = \kappa_{\gamma_1,\gamma_2}+\kappa_{\gamma_1',\gamma_2'}$}{Page \pageref{par:pathsplittingmult}}
\notation{\subitem contractible}{$\int_{\phi_\gamma} \kappa = 0$, where $\phi_\gamma$ is the contraction of a path $\gamma$}{Page \pageref{par:pathsplittingcon}}

\notation{Thin structure}{thin homotopy equivariant structure induced by  a superficial  connection}{Page \pageref{def:thinstructure}}
\notation{\subitem fusive}{an inducing connection is fusive}{Page \pageref{def:thinstructure}}
\notation{\subitem multiplicative}{the error 1-form of an inducing connection admits a multiplicative and contractible path splitting }{}
\notation{\subitem fusive and multiplicative}{there is an inducing connection with both of above properties}{Page \pageref{def:mfthinstructure}}

\notation{Central extension of $LG$}{\strut}{\strut}
\notation{\subitem thin fusion}{equipped with a multiplicative fusion product and a fusive and multiplicative thin structure}{Page \pageref{def:thinfusionext}}

\newpage

\kobib{F:/uni/bibliothek/tex/}

\end{document}